\newcounter{mnote}
  \let\oldmarginpar\marginpar
    \renewcommand\marginpar[1]{\-\oldmarginpar[\raggedleft\footnotesize #1]%
    {\raggedright\footnotesize #1}}
\numberwithin{equation}{section}
\newtheorem{theorem}{Theorem}[section]
\newtheorem{lemma}[theorem]{Lemma}
\newtheorem{remark}[theorem]{Remark}
\newtheorem{hypothesis}[theorem]{Hypothesis}
\newcommand{\dd}{\,{\rm d}}
\newcommand{\curl}{\operatorname{curl}}
\renewcommand{\div}{\operatorname{div}}
\newcommand{\grad}{\operatorname{grad}}
\begin{document}
\title[Low-order finite element complex]{Low-order finite element complex with application to a fourth-order elliptic singular perturbation problem}
\author{Xuewei Cui}%
\address{School of Mathematics, Shanghai University of Finance and Economics, Shanghai 200433, China}%
\email{xueweicui@stu.sufe.edu.cn }%
\author{Xuehai Huang}%
\address{School of Mathematics, Shanghai University of Finance and Economics, Shanghai 200433, China}%
\email{huang.xuehai@sufe.edu.cn}%

\thanks{
This work was supported by the National Natural Science Foundation of China Project 12171300. %, and the Natural Science Foundation of Shanghai  (Grant No.\ 21ZR1480500).
}
\keywords{Nonconforming finite element complex, Fourth-order elliptic singular perturbation problem,
Generalized singularly perturbed Stokes-type equation, Decoupled finite element method, Interpolation operator, Robust and optimal convergence.}
\makeatletter
\@namedef{subjclassname@2020}{\textup{2020} Mathematics Subject Classification}
\makeatother
\subjclass[2020]{
%65N55;   %% Multigrid methods; domain decomposition
%65F10;   %%  Iterative methods for linear systems
65N30;   %%  Finite elements, Rayleigh-Ritz and Galerkin methods, finite methods;
65N12;   %%  Stability and convergence of numerical methods;
65N22;   %%  Solution of discretized equations
%65N15;   %%  Error bounds
}

\begin{abstract}
A low-order nonconforming finite element discretization of a smooth de Rham complex starting from the $H^2$ space in three dimensions is proposed, involving an $H^2$-nonconforming finite element space, a new tangentially continuous $H^1$-nonconforming vector-valued finite element space, the lowest-order Raviart-Thomas space, and piecewise constant functions. While nonconforming for the smooth complex, the discretization conforms to the classical de Rham complex. It is applied to develop a decoupled mixed finite element method for a fourth-order elliptic singular perturbation problem, focusing on the discretization of a generalized singularly perturbed Stokes-type equation.
In contrast to Nitsche's method, which requires additional stabilization to handle boundary layers, the nodal interpolation operator for the lowest-order N\'{e}d\'{e}lec element of the second kind is introduced into the discrete bilinear forms. This modification yields a decoupled mixed method that achieves optimal convergence rates uniformly with respect to the perturbation parameter, even in the presence of strong boundary layers, without requiring any additional stabilization.
% While similar interpolation ideas were used in prior work to prevent divergence with suboptimal rates, our method is the first to attain uniform optimal convergence in this setting.
\end{abstract}

\maketitle

%\tableofcontents
%%%%%%%%%%%%%%%%%%%%%%%%%%%%%%
\section{Introduction}
We aim to construct a nonconforming finite element discretization of the smooth de Rham complex on a bounded domain $\Omega \subset \mathbb{R}^{3}$:
\begin{equation}\label{deRham2}
0\xrightarrow{\subset} H_0^2(\Omega)\xrightarrow{\nabla} H_0^1(\Omega;\mathbb{R}^{3})\xrightarrow{\curl} H_0(\div,\Omega)\xrightarrow{\div}L_0^{2}(\Omega) \xrightarrow{}0,
\end{equation}
where the Sobolev spaces are defined as
\begin{align*}
H(\div,\Omega)&:=\{\boldsymbol{v}\in L^{2}(\Omega;\mathbb{R}^{3})
:\div\boldsymbol{v}\in L^{2}(\Omega)\},\\
H_{0}(\div,\Omega)&:=\{\boldsymbol{v}\in H(\div,\Omega)
:\boldsymbol{v}\cdot\boldsymbol{n}=0\, \,\mathrm{on}\,\, \partial\Omega\},
\end{align*}
and $L_0^{2}(\Omega)$ denotes the subspace of $L^2(\Omega)$ consisting of functions with zero mean value.
Hilbert complexes such as~\eqref{deRham2} play a fundamental role 
in theoretical analysis and the design of stable numerical methods for partial differential equations~\cite{ArnoldFalkWinther2006,ArnoldFalkWinther2010,Arnold2018,ChenHuang2018,ArnoldHu2021}. 
The complex \eqref{deRham2} is smoother than
the standard (domain) de Rham complex 
\begin{equation}\label{deRhamthreedim}
0\xrightarrow{\subset} H_0^1(\Omega)\xrightarrow{\nabla} H_0(\curl,\Omega)\xrightarrow{\curl} H_0(\div,\Omega)\xrightarrow{\div}L_0^{2}(\Omega) \xrightarrow{}0,
\end{equation}
where the relevant Sobolev spaces are defined as
\begin{align*}
H(\curl,\Omega)&:=\{\boldsymbol{v}\in L^{2}(\Omega;\mathbb{R}^{3})
:\curl\boldsymbol{v}\in L^{2}(\Omega;\mathbb{R}^{3})\},\\
H_{0}(\curl,\Omega)&:=\{\boldsymbol{v}\in H(\curl,\Omega)
:\boldsymbol{v}\times\boldsymbol{n}=0 \,\,\mathrm{on}\,\, \partial \Omega\}.
\end{align*}
In addition to~\eqref{deRham2}, other smooth de Rham complexes include the so-called Stokes complexes:
\begin{equation}\label{stokethreedim1}
0\xrightarrow{\subset} H_0^1(\Omega)\xrightarrow{\nabla} H_0(\grad\curl,\Omega)\xrightarrow{\curl} H_0^1(\Omega;\mathbb{R}^{3})\xrightarrow{\div}L_0^{2}(\Omega) \xrightarrow{}0,
\end{equation}
\begin{equation}\label{stokethreedim2}
0\xrightarrow{\subset} H_0^2(\Omega)\xrightarrow{\nabla} H_0^1(\curl,\Omega)\xrightarrow{\curl} H_0^1(\Omega;\mathbb{R}^{3})\xrightarrow{\div}L_0^{2}(\Omega) \xrightarrow{}0,
\end{equation}
with the associated Sobolev spaces given by
\begin{align*}
H_0(\grad\curl,\Omega)&:=\{\boldsymbol{v}\in H_{0}(\curl,\Omega)
:\curl\boldsymbol{v}\in H_0^{1}(\Omega;\mathbb{R}^{3})\}, \\
H_0^1(\curl,\Omega)&:=\{\boldsymbol{v}\in H_0^{1}(\Omega;\mathbb{R}^{3})
:\curl\boldsymbol{v}\in H_0^{1}(\Omega;\mathbb{R}^{3})\}.
\end{align*}

Finite element discretizations of the de Rham complex~\eqref{deRhamthreedim} have been extensively studied in the literature; see, for example, \cite{Hiptmair1999,Hiptmair2001,Arnold2018,ArnoldFalkWinther2006,ArnoldFalkWinther2010,ChenHuang2024,ChenHuang2024a,ChristiansenHuHu2018}. The finite element de Rham complexes with varying degrees of smoothness developed in \cite{ChenHuang2024,ChenHuang2024a} cover the discretizations of all the complexes~\eqref{deRham2}-\eqref{stokethreedim2}.

To reduce the number of degrees of freedom (DoFs), conforming macro-element Stokes complexes on split meshes have been developed for the Stokes complex~\eqref{stokethreedim1}; see \cite{ChristiansenHu2018,HuZhangZhang2022}. A conforming virtual element construction for the same complex was proposed in \cite{BeiraoDassiVacca2020}. In addition, nonconforming discretizations of the Stokes complex~\eqref{stokethreedim1} have been explored in \cite{Huang2023,HuangZhang2024,ZhangZhangZhang2023}.
For the Stokes complex~\eqref{stokethreedim2}, conforming finite element discretizations have been studied in \cite{ChenHuang2024,Neilan2015}, as well as on split meshes in \cite{FuGuzmanNeilan2020,GuzmanLischkeNeilan2022}. Nonconforming discretizations of the Stokes complex~\eqref{stokethreedim2} were investigated in \cite{TaiWinther2006,GuzmanNeilan2012}. In two dimensions, both conforming and nonconforming finite element discretizations of the Stokes complex have been extensively studied; see, for example, \cite{AustinManteuffelMcCormick2004,MardalTaiWinther2002,Lee2010,ChristiansenHu2018,FalkNeilan2013,GuzmanJohnnyNeilan2014,GuzmanLischkeNeilan2020,GuzmanNeilan2012,ChenHuang2022,TongZhaiZhang2025}.

A conforming finite element discretization of the smooth de Rham complex~\eqref{deRham2} was constructed in~\cite{ChenHuang2024}. 
However, to the best of our knowledge, nonconforming finite element discretizations of complex~\eqref{deRham2} have not yet been explored. In this paper, we develop a low-order nonconforming finite element discretization of complex~\eqref{deRham2}:
\begin{equation}\label{homcomplex}
0\xrightarrow{\subset} W_{h}\xrightarrow{\nabla} \Phi_{h}\xrightarrow{\curl} V_{h}^{\div}\xrightarrow{\div}\mathcal{Q}_{h} \xrightarrow{}0.
\end{equation}
The space $W_h$ is taken as the continuous $H^2$-nonconforming finite element space introduced in~\cite{TaiWinther2006}, $V_{h}^{\div}$ is chosen as the lowest-order Raviart-Thomas element space~\cite{RaviartThomas1977,Nedelec1980}, and $\mathcal{Q}_h$ consists of piecewise constant functions. The main focus of this work is the construction of a new tangentially continuous, $H^1$-nonconforming, vector-valued finite element space $\Phi_h$.
The shape function space of $W_h$ is $\mathbb{P}_{2}(T) \oplus b_T\,\mathbb{P}_{1}(T)$, where $b_T=\lambda_0\lambda_1\lambda_2\lambda_3$ is the quartic bubble function on $T$ with $\lambda_i\, (i = 0,1, 2, 3) $ denoting the barycentric coordinate corresponding to the vertex $\texttt{v}_i$. 
Guided by the smooth de Rham complex~\eqref{deRham2}, the shape function space of $\Phi_{h}$ must contain $ \nabla(b_T \,\mathbb{P}_1(T))$ to ensure weak continuity of the normal component across element interfaces. 
To this end, we augment the lowest-order N\'{e}d\'{e}lec element of the second kind~\cite{Nedelec1986} and define the local shape function space as
$$ \Phi(T) := \mathbb{P}_1(T; \mathbb{R}^3)\,\oplus\, \nabla(b_T \,\mathbb{P}_1(T)),$$ 
which has dimension 16. 
The DoFs for $\Phi(T)$ are given by
\begin{equation*}
\int_{e}(\boldsymbol{v}\cdot \boldsymbol{t})\,q\dd s, \quad \forall\, q\in\mathbb{P}_{1}(e), \,e\in \Delta_{1}(T);\qquad
\int_{F}\boldsymbol{v}\cdot \boldsymbol{n} \dd S,\quad F\in \Delta_{2}(T).	
\end{equation*}
% \begin{align*}
% \int_{e}(\boldsymbol{v}\cdot \boldsymbol{t})\,q\dd s,& \quad \forall\, q\in\mathbb{P}_{1}(e), \,e\in \Delta_{1}(T),\\
% \int_{F}\boldsymbol{v}\cdot \boldsymbol{n} \dd S,&\quad F\in \Delta_{2}(T).
% \end{align*}
The resulting global finite element space $\Phi_h$ has continuous tangential components across element interfaces, i.e., $\Phi_h \subset H_0(\curl,\Omega)$. 
The finite element complex~\eqref{homcomplex} thus constitutes a nonconforming discretization of the smooth de Rham complex~\eqref{deRham2}, while remaining a conforming discretization of the standard de Rham complex~\eqref{deRhamthreedim}.

We then apply the following latter part of the finite element complex~\eqref{homcomplex}:
	\begin{equation}\label{shorthomcomplex}
		\Phi_{h}\xrightarrow{\curl} V_{h}^{\div}\xrightarrow{\div}\mathcal{Q}_{h} \xrightarrow{}0,
	\end{equation}
	% the latter three spaces of the nonconforming finite element complex~\eqref{homcomplex}, 
	together with the quadratic Lagrange element, to develop a decoupled mixed finite element method for the following fourth-order elliptic singular perturbation problem:
\begin{equation}\label{eqtwo}
	\begin{cases}
		\varepsilon^2\Delta^{2}u  -\Delta u=f& \qquad \,\,\mathrm{in}\,\,\Omega,\\
		u=\partial_{n}u=0 & \qquad\,\,\mathrm{on}\,\, \partial\Omega,
	\end{cases}
\end{equation}
% \begin{equation}
	% \left\{
	% \begin{aligned}
		% &\varepsilon^2\Delta^{2}u  -\Delta u=f\qquad \,\,\mathrm{in}\,\,\Omega,\\
		% &u=\partial_{n}u=0 \qquad  \qquad\,\mathrm{on}\,\, \partial\Omega,
		% \end{aligned}\right. 
	% \end{equation}
where $f \in L^{2}(\Omega)$ is a given function, $\Delta^2$ is the biharmonic operator, $\partial_n$ represents the normal derivative on the boundary, and $\varepsilon>0$ is a real parameter. Notably, as $\varepsilon \to 0$, problem~\eqref{eqtwo} degenerates into the classical Poisson equation, giving rise to boundary layer phenomena.

Classical approaches to problem~\eqref{eqtwo} based on $H^2$-conforming finite elements~\cite{HuLinWu2024,ChenHuang2021,ChenChenGaoHuangEtAl2025,ZhangShangyou2009} have been studied in~\cite{Semper1992}. However, due to the considerable difficulty in implementing $H^2$-conforming elements, $H^2$-nonconforming elements, such as those developed in~\cite{WangWuXie2013,ChenChenXiao2014,WangXuHu2006,ZhangWang2008,XieShiLi2010,WangMeng2007,ChenZhaoShi2005,NilssenTai2001}, are more commonly adopted. An alternative strategy is the $C^0$ interior penalty discontinuous Galerkin method, which employs the Lagrange element spaces and was introduced in~\cite{BrennerNeilan2011,FranzRoos2014}. 
% This approach avoids the need for $H^2$-conforming elements and provides a flexible framework for handling higher-order problems.  
% As $\varepsilon \to 0$, the original problem \eqref{eqtwo} degenerates into a Poisson equation, resulting in the formation of boundary layers. 
Most of the aforementioned methods, including those in \cite{WangWuXie2013,ChenChenXiao2014,WangXuHu2006,WangMeng2007,ChenZhaoShi2005,NilssenTai2001}, were designed for the primal formulation of the problem~\eqref{eqtwo}. While these methods typically yield uniform error estimates with respect to $\varepsilon$, they do not achieve optimal convergence rates. In particular, as $\varepsilon \to 0$, the convergence rate is generally limited to $\mathcal{O}(h^{1/2})$, which is sharp but suboptimal.

To develop a robust and optimal discretization for the fourth-order elliptic singularly perturbed problem~\eqref{eqtwo} with boundary layers, it is crucial that the scheme reduce to the standard discretization of the Poisson equation when $\varepsilon = 0$. In this limiting case, the discrete space, bilinear forms, and right-hand side should involve only the boundary condition $u = 0$, without enforcing $\partial_{n} u = 0$ strongly or weakly.

A common strategy for achieving this is to impose the boundary condition $\partial_n u = 0$ weakly using Nitsche's method \cite{Nitsche1971} or the penalty technique~\cite{Arnold1982}, instead of incorporating $\partial_n u = 0$ directly into the discrete space, as done in \cite{GuzmaneykekhmanNeilan2012,WangHuangTangZhou2018,HuangShiWang2021}. Another alternative is to adopt the mixed formulation for the fourth-order operator proposed in \cite{HuangTang:2025,LiuHuangWang2020}.	However, Nitsche-type methods introduce additional stabilization terms, and mixed formulations lead to saddle-point systems, making the implementation more involved, even though both approaches can deliver optimal and parameter-uniform error estimates.

To avoid the penalty terms required by Nitsche-type methods and the saddle-point structure of mixed methods, we instead decouple the fourth-order singularly perturbed problem~\eqref{eqtwo} into a sequence of second-order problems and incorporate a suitable interpolation operator into the discrete bilinear forms. This yields a new finite element scheme that achieves optimal and uniform accuracy across all perturbation regimes while maintaining a simpler implementation framework.
Decoupling high-order elliptic equations into Poisson-type and Stokes-type problems, as in the frameworks proposed in \cite{ChenHuang2018,GallistlDietmar2017}, offers an effective strategy for designing efficient numerical methods for such equations.
Applying the framework in \cite{ChenHuang2018}, we decouple the fourth-order elliptic singular perturbation problem~\eqref{eqtwo} into a combination of two Poisson equations and a generalized singularly perturbed Stokes-type equation involving the $\curl$ operator: find $u, w\in H_{0}^{1}(\Omega)$,
$\boldsymbol{\phi}\in H_{0}^{1}(\Omega;\mathbb{R}^{3})$,
$\boldsymbol{p}\in H_0(\div,\Omega)$ and $\lambda\in L_0^2(\Omega)$ such that
\begin{subequations}\label{intro:decoupleA}
\begin{align}
			(\nabla w,\nabla v)&=(f,v), \label{intro:AB}\\
			\varepsilon^2(\nabla\boldsymbol{\phi},\nabla\boldsymbol{\psi})+(\boldsymbol{\phi},\boldsymbol{\psi})+(\curl  \boldsymbol{\psi},\boldsymbol{p})&=(\nabla w,\boldsymbol{\psi}), \label{intro:AC}\\
			(\mu,\div\boldsymbol{p})&=0, \label{intro:AC0}\\
			(\curl  \boldsymbol{\phi},\boldsymbol{q})-(\lambda, \div\boldsymbol{q}) &=0,\label{intro:AD}\\
			(\nabla u,\nabla \chi)&=(\boldsymbol{\phi},\nabla\chi), \label{intro:AE}
	\end{align}
	for any $v,\chi\in H_{0}^{1}(\Omega)$, $\boldsymbol{\psi}\in H_{0}^{1}(\Omega;\mathbb{R}^{3})$,
	$\boldsymbol{q}\in H_0(\div,\Omega)$ and $\mu\in L_0^2(\Omega)$.
\end{subequations}
The well-posedness of the generalized singularly perturbed Stokes-type equation \eqref{intro:AC}-\eqref{intro:AD} is related to the smooth de Rham complex~\eqref{deRham2}.
We establish the well-posedness of the decoupled formulation \eqref{intro:decoupleA} and demonstrate its equivalence to problem~\eqref{eqtwo}.

The formulation \eqref{intro:decoupleA} is termed ``decoupled'' because it can be solved sequentially: one first computes $w$ from \eqref{intro:AB}; then solves for $\boldsymbol{\phi}$, $\boldsymbol{p}$, and $\lambda$ using \eqref{intro:AC}-\eqref{intro:AD}; and finally obtains $u$ from \eqref{intro:AE}.

Based on the decoupled formulation \eqref{intro:decoupleA}, we propose a novel decoupled nonconforming finite element method for the fourth-order elliptic singular perturbation problem~\eqref{eqtwo}, using the short finite element complex~\eqref{shorthomcomplex} in conjunction with the quadratic Lagrange element.
The weak formulations \eqref{intro:AB} and \eqref{intro:AE} of the Poisson equations are discretized using quadratic Lagrange elements, while the finite element spaces $\Phi_{h}$, $V_{h}^{\div}$, and $\mathcal{Q}_h$ from complex~\eqref{homcomplex} are employed to approximate $\boldsymbol{\phi}$, $\boldsymbol{p}$, and $\lambda$ in the generalized singularly perturbed Stokes-type system \eqref{intro:AC}-\eqref{intro:AD}, respectively.

% Although Nitsche's method can improve convergence in the presence of boundary layers, it requires additional stabilization terms, complicating implementation. To address this, 
Inspired by the approach in \cite{WangXuHu2006,WangMeng2007}, we introduce the nodal interpolation operator $I_h^{\rm ND}$ associated with the lowest-order N\'{e}d\'{e}lec element of the second kind~\cite{Nedelec1986} into the discrete bilinear forms. Specifically, we replace $(\boldsymbol{\phi}_h, \boldsymbol{\psi})$ and $(\curl \boldsymbol{\phi}_h, \boldsymbol{q})$ with $(I_h^{\rm ND} \boldsymbol{\phi}_h, I_h^{\rm ND} \boldsymbol{\psi})$ and $(\curl (I_h^{\rm ND} \boldsymbol{\phi}_h), \boldsymbol{q})$, respectively.
The resulting decoupled finite element method achieves optimal convergence rates uniformly with respect to the perturbation parameter $\varepsilon$, even in the presence of strong boundary layers, without requiring additional stabilization, compared with the Nitsche-type methods in~\cite{GuzmaneykekhmanNeilan2012,WangHuangTangZhou2018,HuangShiWang2021}. Moreover, the discrete systems arising from these Nitsche-type methods and from the mixed methods in \cite{HuangTang:2025,LiuHuangWang2020} correspond to the original fourth-order problem and therefore have condition numbers of order $\mathcal{O}(h^4)$. In contrast, the proposed decoupled formulation consists solely of second-order problems with $\mathcal{O}(h^2)$ condition numbers, facilitating the design of more efficient solvers. 
	We further show that the proposed method is equivalent to  the following nonconforming finite element method: find $u_{h}^W\in W_{h}$ such that
	\begin{equation*}%\label{intro:fourthpertbncfminterp}
		\varepsilon^2 (\nabla_h^2u_h^W, \nabla_h^2v)+(\nabla(I_h^{\grad}u_h^W),\nabla(I_h^{\grad}v))=(f,I_h^{\grad}v)\quad\forall~v\in W_{h},
	\end{equation*}
	where $I_h^{\grad}$ is the nodal interpolation operator of the quadratic Lagrange element.

It is worth noting that the interpolation operator introduced in \cite{WangXuHu2006,WangMeng2007} was originally designed to address the divergence of the Morley-Wang-Xu method for the Poisson equation, rather than to improve its convergence rate; consequently, the uniform convergence achieved there remains suboptimal at $\mathcal{O}(h^{1/2})$. In contrast, we are the first to employ the interpolation operator $I_h^{\rm ND}$ explicitly to attain optimal convergence for the finite element method. This approach can also be extended to other singularly perturbed problems.

The remainder of this paper is organized as follows. Section \ref{chap2} introduces the necessary notation and develops a nonconforming finite element complex along with a corresponding commutative diagram. Section~\ref{DMFE} presents the decoupled formulation of the fourth-order elliptic singular perturbation problem, and establishes its equivalence to the original problem. In Section~\ref{chap5}, we propose an optimal and robust decoupled nonconforming finite element method. Finally, Section~\ref{chap6} provides numerical experiments to demonstrate the effectiveness of the proposed approach.
% \section{Notation and  the finite element spaces}\label{chap2}
\section{Low-order nonconforming finite element complex}\label{chap2}
In this section, we propose a low-order nonconforming finite element discretization of the smooth de Rham complex~\eqref{deRham2}, given by
\begin{equation}\label{discretecomplex2}
0\xrightarrow{\subset} W_{h}\xrightarrow{\nabla} \Phi_{h}\xrightarrow{\curl} V_{h}^{\div}\xrightarrow{\div}\mathcal{Q}_{h} \xrightarrow{}0, 
\end{equation}
where $W_h$ is the continuous $H^2$-nonconforming finite element space in \cite{TaiWinther2006}, $\Phi_h$ is a new tangential-continuous $H^1$-nonconforming vector-valued finite element space constructed in this section, $V_{h}^{\div}$ is the lowest-order Raviart-Thomas element space~\cite{RaviartThomas1977,Nedelec1980}, and $\mathcal{Q}_{h}$ is the piecewise constant space. We assume throughout this paper that the bounded domain $\Omega \subset \mathbb{R}^3$ is topologically trivial.

\subsection{Notation}
%Let $\Omega\subset \mathbb{R}^3$ be a bounded polyhedron.
Given an integer $m \geq 0$ and a bounded domain $D \subset \mathbb{R}^{3}$,
we define $H^{m}(D)$  as the standard Sobolev space of functions on $D$. The corresponding norm and
semi-norm are denoted by $\| \cdot \|_{ m,D}$ and $|\cdot |_{m,D}$, respectively. Set $L^{2}(D) = H^{0}(D)$.
For integer $k\geq0$,
let $\mathbb{P}_{k}(D)$ represent the space of all polynomials in $D$ with the total degree no more than $k$.
Set $\mathbb{P}_{k}(D)=\{0\}$ for $k<0$.
% The vector version of $H^{m}(D)$,\,$H_{0}^{m}(D)$ and $\mathbb{P}_{k}(D)$  are denoted, respectively, by $ H^{m}(D; \mathbb{R}^{3})$, $ H_{0}^{m}(D; \mathbb{R}^{3})$
% and $\mathbb{P}_{k}(D; \mathbb{R}^{3})$.
Let $L_{0}^{2}(D)$ be the space of functions in $L^{2}(D)$ with vanishing integral average values. For a space $B(D)$ defined on $D$,
let $B(D; \mathbb{R}^3):=B(D) \otimes \mathbb{R}^3$ be its vector version.
We denote $(\cdot, \cdot)_{D}$
as the usual inner product on $L^{2}(D)$ or $L^{2}(D; \mathbb{R}^3)$. We denote $H_{0}^{m}(D)\,(H_{0}^{m}(D; \mathbb{R}^3))$ as the closure of $C_{0}^{\infty}(D)\,(C_{0}^{\infty}(D; \mathbb{R}^3))$
with respect to the norm $\| \cdot \|_{m,D}$.  
In case $D$ is $\Omega$, we abbreviate $\| \cdot \|_{ m,D}$, $|\cdot|_{m,D}$
and $(\cdot, \cdot)_{D}$  as $\| \cdot \|_{m}$, $|\cdot|_{m}$ and $(\cdot, \cdot)$, respectively. Let $Q_D^k: L^{2}(D) \to \mathbb{P}_{k}(D)$ denote the $L^2$-orthogonal projection operator.
Its vector-valued analogue is still denoted by $Q_D^k$. For brevity, we write $Q_D := Q_D^0$ for the projection onto piecewise constants.   
We use $\boldsymbol{n}_{\partial D}$ to denote the unit outward normal vector of $\partial D$, which will be abbreviated as $\boldsymbol{n}$ if it does not cause any confusion.

Let $\{\mathcal{T}_{h}\}_{h>0}$ be a regular family of tetrahedral meshes of $\Omega\subset \mathbb{R}^{3}$, where $h=\max_{T\in\mathcal{T}_{h}}h_T$ with $h_T$ being the diameter of tetrahedron $T$. For $\ell=0,1,2$,
denote by $\Delta_{\ell}(\mathcal T_h)$ and $\Delta_{\ell}(\mathring{\mathcal T}_h)$ the set of all subsimplices and all interior subsimplices of dimension $\ell$ in the partition $\mathcal{T}_{h}$, respectively. For a tetrahedron $T$, we let $\Delta_{\ell}(T)$ denote the set of subsimplices of dimension $\ell$. %, for $\ell=0,1,2$. 
For a subsimplex $f$ of $\mathcal{T}_{h}$, let $\mathcal T_f$ be the set of all tetrahedrons in $\mathcal{T}_h$ sharing $f$.
Denote by $\omega_{T}$ the union of all the simplices in the set $\{\mathcal T_{\texttt{v}}\}_{\texttt{v}\in\Delta_0(T)}$.
For a face $F\in \Delta_{2}(\mathcal T_h)$, let $\Delta_{1}(F)$ be the set of all edges of $F$, and $\boldsymbol{n}_F$ be its unit normal vector, which will be abbreviated as $\boldsymbol{n}$. 
For an edge $e\in \Delta_{1}(\mathcal T_h)$, let $\boldsymbol{t}_e$ be its unit tangent vector, which will be abbreviated as $\boldsymbol{t}$. 
%For each $e\in\Delta_{1}(F)$, denote by $\boldsymbol{n}_{F,e}$ the
%unit vector being parallel to $F$ and outward normal to $\partial F$. Set $\boldsymbol{t}_{F,e} :=\boldsymbol{n}_F %\times\boldsymbol{n}_{F,e} $, where $\times$ is the exterior product.
Consider two adjacent tetrahedrons, $T_1$ and $T_2$ sharing an interior face $F$. Define the jump of a function $w$ on $F$ as
\begin{equation*}
[w]:=(w|_{T_1})|_F\boldsymbol{n}_F\cdot\boldsymbol{n}_{\partial T_1}+(w|_{T_2})|_F\boldsymbol{n}_F\cdot\boldsymbol{n}_{\partial T_2}.
\end{equation*}
On a face $F$ lying on the boundary $\partial\Omega$, the jump becomes $[w]:=w|_F$.
For a vector-valued function $\boldsymbol{v}$, define
\begin{equation*}
\div\boldsymbol{v}=\nabla\cdot\boldsymbol{v},\quad \curl\boldsymbol{v}=\nabla\times\boldsymbol{v},
\end{equation*}
where $\nabla$ denotes the gradient operator.

Let $\nabla_{h}$,  $\curl_{h}$  and $\div_{h}$  be the element-wise counterpart of  $\nabla$,
\,$\curl$ and $\div$ with respect to $\mathcal{T}_{h}$, respectively. For piecewise smooth function $v$, introduce broken norms:
\begin{equation*}
	\|v\|_{1,h}:=(\|v\|_{0}^2+|v|_{1,h}^2)^{1/2},\quad\;\; |v|_{j,h}:=\|\nabla_{h}^jv\|_0\;\;\;  \textrm{ for }j=1,2.
\end{equation*}
In this paper, we use ``$\lesssim \cdots$'' to mean that ``$\leq C \cdots$'',
where $C$ is a generic positive constant independent of the mesh size $h$ and the singular perturbation parameter $\varepsilon$, which may take different values in different contexts. Moreover, $A \eqsim B$ means that $A \lesssim B$ and $B \lesssim A$.

\subsection{Finite element de Rham complex}
We revisit the finite element de Rham complex~\cite{Hiptmair1999,Hiptmair2001,ArnoldFalkWinther2006,ArnoldFalkWinther2010,Arnold2018} in this subsection.

Recall the quadratic Lagrange element space~\cite{CiarletWagschal1971,Zlamal1968}%\textcolor{red}{\cite[Section 2]{CiarletWagschal1971} and \cite[Section 4]{Zlamal1968}}
\begin{equation*}
V_{h}^{\rm grad}=\{v_{h}\in H_0^1(\Omega): v_h|_{T} \in \mathbb{P}_{2}(T) \textrm{ for } T\in \mathcal{T}_{h}\}
\end{equation*}
with the degrees of freedom (DoFs) given by
\begin{subequations}\label{Lagrange2DoFs}
\begin{align}
v(\texttt{v}),&\quad \texttt{v}\in \Delta_{0}(T)\label{Lagrange2DoF1},\\
\int_{e}v \dd s,&\quad e\in \Delta_{1}(T)\label{Lagrange2DoF2},
\end{align}
\end{subequations}
the lowest order N\'{e}d\'{e}lec element space of the second kind~\cite{Nedelec1986}%\textcolor{red}{\cite[Definition 5]{Nedelec1986}}
\begin{equation*}
V_{h}^{\rm ND}=\{\boldsymbol{v}_{h}\in H_0(\curl,\Omega): \boldsymbol{v}_h|_{T} \in \mathbb{P}_{1}(T; \mathbb{R}^{3}) \textrm{ for } T\in \mathcal{T}_{h}\}
\end{equation*}
with the DoFs given by
\begin{equation}\label{VNDDoFs}
\int_{e}(\boldsymbol{v}\cdot \boldsymbol{t})\,q\dd s,\quad \forall\, q\in\mathbb{P}_{1}(e), \,e\in \Delta_{1}(T),
\end{equation}
the lowest order Raviart-Thomas element space~\cite{RaviartThomas1977,Nedelec1980}%\textcolor{red}{\cite[(4.1)]{RaviartThomas1977} and \cite[Definition 5]{Nedelec1980}}
 \begin{equation*}
 V_{h}^{\div}=\{\boldsymbol{v}_{h}\in H_0(\div,\Omega)
 :\boldsymbol{v}_h|_{T} \in \mathbb{P}_{0}(T;\mathbb{R}^{3}) \,\oplus\,\boldsymbol{x} \mathbb{P}_{0}(T)\textrm{ for } T\in \mathcal{T}_{h}\}
 \end{equation*}
with DoFs given by
 \begin{equation*}%\label{RT0DoF}
 \int_F\boldsymbol{v}\cdot\boldsymbol{n}\dd S,\quad F\in \Delta_{2}(T),
 \end{equation*}
and the scalar-valued piecewise constant space
\begin{equation*}
\mathcal{Q}_{h}=\{v_{h}\in L_0^2(\Omega) :v_h|_{T} \in\mathbb{P}_{0}(T)\textrm{ for } T\in \mathcal{T}_{h}\}.
 \end{equation*}
%And
%It holds that \cite{Arnold2018,ArnoldFalkWinther2010,ArnoldFalkWinther2006}
%\begin{equation}\label{femderhamcomplex1form}
%V_{h}^{\rm ND}\cap\ker(\curl)=\nabla V_{2,h}^{\grad}.
%\end{equation}
A combination of the above finite element spaces leads to the following finite element de Rham complex~\cite{Hiptmair1999,Hiptmair2001,ArnoldFalkWinther2006,ArnoldFalkWinther2010,Arnold2018}:
\begin{equation}\label{discretecomplex3}
0\xrightarrow{\subset} V_{h}^{\rm grad}\xrightarrow{\nabla}V_{h}^{\rm ND}\xrightarrow{\curl} V_{h}^{\div}\xrightarrow{\div} \mathcal{Q}_{h} \xrightarrow{}0.
\end{equation}

Let
$\Pi^{\grad}_h: H_0^1(\Omega) \to V_{h}^{\rm grad},\, \Pi_h^{\rm ND}: H_0(\curl,\Omega)\to V_{h}^{\rm ND},\, 
I_h^{\rm RT}: H_0(\div,\Omega)\to V_{h}^{\div}$
and $Q_{h}: L_0^{2}(\Omega)\rightarrow  \mathcal{Q}_{h}$ be the $L^2$-bounded projection operators devised in~\cite[(5.2)]{ArnoldGuzman2021} and \cite[Page 826]{ChristiansenWinther2008}, which admit the following commutative diagram:
\begin{equation}\label{dcommutesdeRham}
\resizebox{0.8\hsize}{!}{$
\begin{array}{c}
\xymatrix{
0\ar[r]^-{} &H_0^{1}(\Omega)\ar[r]^-{\nabla} \ar[d]^{\Pi^{\grad}_h}&  H_0(\curl,\Omega) \ar[r]^-{\curl} \ar[d]^{\Pi^{\rm ND}_h}&  H_0(\div,\Omega) \ar[r]^-{\div} \ar[d]^{I^{\rm RT}_h}&  L_0^{2}(\Omega) \ar[r]^-{} \ar[d]^{Q_h}& 0
\\
0\ar[r]^-{} & V_{h}^{\rm grad}\ar[r]^-{\nabla} & V_{h}^{\rm ND}\ar[r]^-{\curl} &V_{h}^{\div}
\ar[r]^-{\div } & \mathcal{Q}_{h} \ar[r]^-{} & 0
}
\end{array}
$}.
\end{equation}
We have the following interpolation error estimates
\begin{equation}
\label{eq:IhND2}
\|\boldsymbol{v}-\Pi_h^{\rm ND}\boldsymbol{v}\|_{0,T} + h_{T}|\boldsymbol{v}-\Pi_h^{\rm ND}\boldsymbol{v}|_{1,T}\lesssim h_{T}^{s}|\boldsymbol{v}|_{s,\omega_T} 
\end{equation}
for any $\boldsymbol{v}\in H^s(\Omega;\mathbb R^3)\cap H_0(\curl,\Omega)$ and $1\leq s\leq 2$,
and
\begin{equation}
\label{eq:Ihdivprop2}
\|\boldsymbol{v}-I_h^{\rm RT}\boldsymbol{v}\|_{0}%+h_T|\boldsymbol{v}-I_h^{\div}\boldsymbol{v}|_{1,T}
\lesssim h|\boldsymbol{v}|_{1}\qquad\forall~\boldsymbol{v}\in H^1(\Omega;\mathbb R^3)\cap H_0(\div,\Omega).
\end{equation}
% \begin{align}%\label{eq:IhND1}
% % \|\Pi_h^{\rm ND}\boldsymbol{v}\|_0&\lesssim\|\boldsymbol{v}\|_0\qquad\qquad\forall~\boldsymbol{v}\in H_0(\curl,\Omega), \\
% \label{eq:IhND2}
% \|\boldsymbol{v}-\Pi_h^{\rm ND}\boldsymbol{v}\|_{0,T} + h_{T}|\boldsymbol{v}-\Pi_h^{\rm ND}\boldsymbol{v}|_{1,T}
% &\lesssim h_{T}^{s}|\boldsymbol{v}|_{s,\omega_T} \quad\;\;\forall~\boldsymbol{v}\in H^s(\Omega;\mathbb R^3)\cap H_0(\curl,\Omega), \,1\leq s\leq 2, \\
% % \label{eq:Ihdivprop1}
% % \div(I_h^{\rm RT}\boldsymbol{v})&=Q_{h}(\div\boldsymbol{v})\quad\;\;\;\,\forall~\boldsymbol{v}\in H_0(\div,\Omega), \\
% \label{eq:Ihdivprop2}
% \|\boldsymbol{v}-I_h^{\rm RT}\boldsymbol{v}\|_{0}%+h_T|\boldsymbol{v}-I_h^{\div}\boldsymbol{v}|_{1,T}
% &\lesssim h|\boldsymbol{v}|_{1}\qquad\qquad\forall~\boldsymbol{v}\in H^1(\Omega;\mathbb R^3)\cap H_0(\div,\Omega).%\, T\in\mathcal T_h.
% \end{align}

% To introduce the interpolation operator of complex \eqref{discretecomplex2}, we review the finite element de Rham complex:

\subsection{$H^1$-nonconforming finite element for vectors}
In this subsection, we construct a tangential continuous $H^1$-nonconforming finite element by enriching the linear  N\'{e}d\'{e}lec elements of the second kind~\cite{Nedelec1986} to enhance the weak continuity of the normal component across element interfaces. 

To this end, for each tetrahedron $T$ with vertices $\texttt{v}_0, \texttt{v}_1, \texttt{v}_2, \texttt{v}_3$, we take the shape function space as
\begin{align*}
	\Phi(T) :=\mathbb{P}_{1}(T; \mathbb{R}^{3})\,\oplus\, \nabla(b_T\,\mathbb{P}_{1}(T)),
\end{align*}
where $b_T=\lambda_0\lambda_1\lambda_2\lambda_3$ is the quartic bubble function on $T$ with $\lambda_i\, (i = 0,1, 2, 3) $ denoting the barycentric coordinate corresponding to the vertex $\texttt{v}_i$. 
The DoFs of $\Phi(T)$ are given by
\begin{subequations}\label{PhiTDoFs}
	\begin{align}
		\int_{e}(\boldsymbol{v}\cdot \boldsymbol{t})\,q\dd s,& \quad \forall\, q\in\mathbb{P}_{1}(e), \,e\in \Delta_{1}(T),\label{Phi01}\\
		\int_{F}\boldsymbol{v}\cdot \boldsymbol{n} \dd S,&\quad F\in \Delta_{2}(T). \label{Phi02}
	\end{align}
\end{subequations}
The edge-based DoF \eqref{Phi01} is exactly that of the linear  N\'{e}d\'{e}lec elements of the second kind~\cite{Nedelec1986}, while the face-based DoF \eqref{Phi02} enhances the weak continuity of the normal component across element interfaces. The DoFs \eqref{PhiTDoFs} are illustrated in Fig.~\ref{fig:DofPhihWh} (a).

\begin{lemma}\label{lemPhiT}
	The DoFs \eqref{PhiTDoFs} are unisolvent for the shape function space $\Phi(T)$.
\end{lemma}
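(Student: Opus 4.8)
The plan is the standard unisolvence argument: since the number of DoFs in \eqref{PhiTDoFs} equals $\dim\Phi(T)=16$ (there are $6$ edges contributing $2$ DoFs each via \eqref{Phi01}, plus $4$ faces contributing one DoF each via \eqref{Phi02}, giving $12+4=16$), it suffices to show that any $\boldsymbol v\in\Phi(T)$ on which all DoFs in \eqref{PhiTDoFs} vanish must be zero. Write $\boldsymbol v=\boldsymbol v_1+\nabla(b_T q)$ with $\boldsymbol v_1\in\mathbb P_1(T;\mathbb R^3)$ and $q\in\mathbb P_1(T)$; note the decomposition is direct, so I first need to confirm $\mathbb P_1(T;\mathbb R^3)\cap\nabla(b_T\mathbb P_1(T))=\{0\}$, which is immediate by comparing polynomial degrees ($\nabla(b_Tq)$ has components of degree $\le 4$ with a genuine degree-$4$ part unless $q=0$).

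First I would exploit the edge DoFs \eqref{Phi01}. On any edge $e\in\Delta_1(T)$ the bubble $b_T$ vanishes identically, hence $\nabla(b_Tq)\cdot\boldsymbol t = \partial_{\boldsymbol t}(b_Tq)=0$ on $e$ as well (the tangential derivative of a function vanishing on the whole edge is zero). Therefore \eqref{Phi01} reduces to $\int_e(\boldsymbol v_1\cdot\boldsymbol t)\,q\,\mathrm ds=0$ for all $q\in\mathbb P_1(e)$ and all $e$; since $\boldsymbol v_1\cdot\boldsymbol t\in\mathbb P_1(e)$, this forces $\boldsymbol v_1\cdot\boldsymbol t=0$ on every edge. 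These are precisely the DoFs \eqref{VNDDoFs} of the lowest-order second-kind N\'ed\'elec element, which are unisolvent on $\mathbb P_1(T;\mathbb R^3)$, so I conclude $\boldsymbol v_1=0$ and hence $\boldsymbol v=\nabla(b_T q)$.

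It remains to show $q=0$ using the face DoFs \eqref{Phi02}, i.e.\ $\int_F\nabla(b_Tq)\cdot\boldsymbol n\,\mathrm dS=0$ for each $F\in\Delta_2(T)$. Here I would integrate by parts on the face: decomposing $\nabla(b_Tq)\cdot\boldsymbol n = \partial_n(b_Tq)$ and using that $b_T$ vanishes on $\partial T$, one can relate $\int_F\partial_n(b_Tq)\,\mathrm dS$ to an integral of $b_Tq$ over the tetrahedron, or more directly argue as follows: since $\boldsymbol v=\nabla(b_Tq)$ already has vanishing tangential trace on every edge and we now impose vanishing mean normal flux on every face, a divergence-theorem computation gives, for the single scalar $b_Tq$ (which lies in $H^1_0(T)$-like bubble space), $\int_T \Delta(b_Tq)\,\text{(suitable test)} $ $=0$; concretely, testing against $1$ on each face and summing, $\int_{\partial T}\partial_n(b_Tq)\,\mathrm dS=\int_T\Delta(b_Tq)\,\mathrm dx$, and a direct evaluation using $b_Tq\in\mathbb P_5(T)$ with $b_Tq|_{\partial T}=0$ pins down the only remaining free parameter. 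The cleanest route is to count: after $\boldsymbol v_1=0$, the map $q\mapsto\bigl(\int_F\partial_n(b_Tq)\,\mathrm dS\bigr)_{F\in\Delta_2(T)}$ is linear from the $4$-dimensional space $\mathbb P_1(T)$ to $\mathbb R^4$, so injectivity is equivalent to surjectivity; I would verify injectivity by a short explicit calculation on the reference tetrahedron, or by observing that $\int_F\partial_n(b_Tq)\,\mathrm dS$ equals (up to a nonzero constant depending only on $T$) a nondegenerate linear functional of the nodal values of $q$.

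The main obstacle is this last step: showing that the four face functionals $q\mapsto\int_F\partial_n(b_Tq)\,\mathrm dS$ are linearly independent on $\mathbb P_1(T)$. Everything before it is routine (degree count for directness, tangential-trace vanishing of $b_Tq$, reduction to the known N\'ed\'elec unisolvence). For the face-functional independence I expect to either (i) carry out the reference-element computation, expressing $b_T=\lambda_0\lambda_1\lambda_2\lambda_3$ and $q$ in barycentric coordinates and integrating the degree-$4$ polynomial $\partial_n(b_Tq)$ over each face using the standard barycentric integration formula $\int_F\lambda_0^a\lambda_1^b\lambda_2^c\,\mathrm dS = \tfrac{2|F|\,a!\,b!\,c!}{(a+b+c+2)!}$, and then checking the resulting $4\times4$ matrix is invertible; or (ii) argue structurally that if $\int_F\partial_n(b_Tq)\,\mathrm dS=0$ for all $F$ then, combined with $b_Tq\equiv 0$ on $\partial T$, the function $b_Tq$ would have to be orthogonal (in a suitable sense) to the image of $\div$ on $V^{\div}(T)$, consistent with the exactness philosophy behind the complex \eqref{homcomplex}; the first approach is more self-contained and is what I would actually write down.
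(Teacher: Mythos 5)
Your proposal follows essentially the same route as the paper: dimension count, direct-sum decomposition $\boldsymbol v=\boldsymbol v_1+\nabla(b_T q)$, vanishing of the gradient-bubble part on edges to reduce \eqref{Phi01} to the second-kind N\'ed\'elec DoFs (giving $\boldsymbol v_1=0$), and then the face DoFs to kill $q$. For the last step, which you flag as the main obstacle, the paper avoids your $4\times4$ barycentric integration by writing $b_T=\lambda_i b_{F_i}$ so that on the face $F_i$ (where $\lambda_i=0$) one has $\nabla(b_Tq)\cdot\boldsymbol n=(\nabla\lambda_i\cdot\boldsymbol n)\,b_{F_i}q$; the four conditions then become $\int_{F_i}b_{F_i}q\,\mathrm dS=0$, and with $q=\sum_j C_j\lambda_j$ these read $\sum_{j\neq i}C_j=0$ for $i=0,\dots,3$, whose only solution is $C_0=\cdots=C_3=0$ — so your plan is sound and this identity is the short cut that makes the explicit calculation trivial.
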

\begin{proof}
	Both the number of DoFs \eqref{PhiTDoFs} and $\dim \Phi(T)$ are 
	\begin{equation*}
		\dim \mathbb{P}_{1}(T; \mathbb{R}^{3})+\dim\nabla(b_T\,\mathbb{P}_{1}(T))\,=\,16.
	\end{equation*}

	Let $\boldsymbol{v} =\boldsymbol{v}_1+ \nabla(b_T \,p)\in \Phi(T)$ with $\boldsymbol{v}_1\in  \mathbb{P}_{1}(T; \mathbb{R}^{3})$ and $p \in \mathbb{P}_{1}(T)$ belong to the kernel of the DoFs  \eqref{PhiTDoFs}.
	%satisfy all the DoFs \eqref{PhiTDoFs} vanish.
	First, we have 
	\begin{equation*}
		\int_{e}(\nabla (b_T\,p)\cdot \boldsymbol{t})\,q\dd s=0  \quad \forall\, q\in\mathbb{P}_{1}(e), \,e\in \Delta_{1}(T).
	\end{equation*}
	Then the vanishing DoF \eqref{Phi01} implies that
	\begin{equation*}
		\int_{e}(\boldsymbol{v}\cdot \boldsymbol{t})\, q\dd s =\int_{e}(\boldsymbol{v}_1\cdot \boldsymbol{t})\, q\dd s=0\quad \forall\, q\in\mathbb{P}_{1}(e), \,e\in \Delta_{1}(T).
	\end{equation*}
	Therefore, $\boldsymbol{v}_1=0$ holds from the unisolvence for the second-kind N\'{e}d\'{e}lec element~\cite[Theorem 5]{Nedelec1986}. Consequently, $\boldsymbol{v}$ can be expressed as 
	$\boldsymbol{v} =\nabla(b_T\,p)$. Furthermore, for face $F_i\in \Delta_{2}(T)$ opposite to vertex $\texttt{v}_i$ ($i=0,1,2,3$), the vanishing DoF \eqref{Phi02} yields
	\begin{equation*}
		\int_{F_i}\nabla (b_T\,p)\cdot \boldsymbol{n}\dd S=(\nabla \lambda_i\cdot \boldsymbol{n})\int_{F_i}b_{F_i}\, p \dd S=0,
	\end{equation*}
	where $b_{F_i}$ is the face bubble function on $F_i$ determined by $b_T=\lambda_ib_{F_i}$.
	This implies that
	\begin{equation*}
		\int_{F_i}b_{F_i}\, p \dd S=0,\quad i=0,1,2,3.
	\end{equation*}
	Expand $p=\sum_{i=0}^3C_i \lambda_{i}$ with $C_i\in\mathbb R$. By a direct computation, we have
	\begin{equation*}
		C_1+C_2+C_3=C_0+C_2+C_3=C_0+C_1+C_3=C_0+C_1+C_2=0.
	\end{equation*}
	Thus, $C_0=C_1=C_2=C_3=0$, and $\boldsymbol{v} = 0$. 
\end{proof}

The $H^1$-nonconforming vector-valued space $\Phi_h$ is given by
\begin{align*}
	\Phi_h=\{&\boldsymbol{v}_{h}\in L^2 (\Omega; \mathbb{R}^{3})
	:\boldsymbol{v}_h|_{T} \in \Phi(T)\textrm{ for } T\in \mathcal{T}_{h}, \text{ all the DoFs \eqref{PhiTDoFs} are} \\
	&\qquad\qquad\qquad\quad\;\;\text{single-valued, and all the DoFs \eqref{PhiTDoFs} on $\partial\Omega$ vanish}\}.
\end{align*}
This construction ensures that $\Phi_h$  has the globally continuous tangential component, i.e., $\Phi_h \subset H_0(\curl,\Omega)$. Moreover, $\Phi_h$ satisfies the weak continuity 
\begin{equation}\label{Phiweakcontinuity1}
	\int_F[\boldsymbol{v}_h]\dd S=0\quad\forall~\boldsymbol{v}_h\in \Phi_h, \,F\in\Delta_{2}(\mathcal T_h).
\end{equation}

% \begin{figure}[htbp]
	% \begin{center}
		% \includegraphics[width=3.5cm]{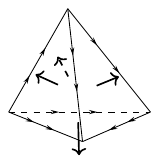}
		% \caption{\textcolor{red}{The sixteen DoFs \eqref{PhiTDoFs} of $\Phi_h$ on a tetrahedron: twelve edge tangential moments and four face normal fluxes.}}
		% \label{fig:DofPhih}
		% \end{center}
	% \end{figure}
% \begin{figure}[htbp]
	% \begin{center}
		% \includegraphics[width=3.5cm]{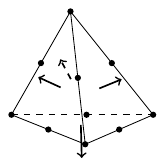}
		% \textcolor{red}{ \caption{The fourteen DoFs \eqref{WTDoFs} of $W_h$ on a tetrahedron: vertex values, edge moments, and face normal derivatives.}}
		% \label{fig:DofWh}
		% \end{center}
	% \end{figure}
\begin{figure}[htbp]
	\centering
	\begin{subfigure}[b]{0.48\textwidth} 
		\centering
		\includegraphics[width=3.8cm]{figures/DofPhih.pdf} 
		\caption{Sixteen DoFs \eqref{PhiTDoFs} of $\Phi_h$.}
		\label{fig:DofPhih} 
	\end{subfigure}
	\hfill 
	\begin{subfigure}[b]{0.48\textwidth} 
		\centering
		\includegraphics[width=3.8cm]{figures/DofWh.pdf}
		\caption{Fourteen DoFs \eqref{WTDoFs} of $W_h$.} 
		\label{fig:DofWh}
	\end{subfigure}
	\caption{DoFs of $\Phi_h$ and $W_h$ on a tetrahedron.}
	\label{fig:DofPhihWh}
\end{figure}

\subsection{Nonconforming finite element complex}
We first revisit the continuous $H^2$-nonconforming finite element space in \cite[Section 2]{TaiWinther2006}.
The shape function space is $\mathbb{P}_{2}(T) \oplus b_T\,\mathbb{P}_{1}(T)$,
and the DoFs are given by (see Fig.~\ref{fig:DofPhihWh} (b)):
\begin{subequations}\label{WTDoFs}
	\begin{align}
		v(\texttt{v}),&\quad \texttt{v}\in \Delta_{0}(T)\label{w1},\\
		\int_{e}v \dd s,&\quad e\in \Delta_{1}(T)\label{w2},\\
		\int_{F}\partial_nv\dd S,&\quad F\in \Delta_{2}(T).\label{w3}
	\end{align}
\end{subequations}
The global $H^2$-nonconforming finite element space is defined by
\begin{equation}\label{Wh}
	\begin{aligned}
		W_h=\{&v_{h}\in L^2(\Omega): v_h|_{T} \in\mathbb{P}_{2}(T) \,\oplus\,  b_T\,\mathbb{P}_{1}(T)\textrm{ for } T\in \mathcal{T}_{h}, \text{all the }\\
		&\text{DoFs in \eqref{WTDoFs} are single-valued, and those located on $\partial\Omega$ vanish}\}.
	\end{aligned}
\end{equation}
It holds that $W_h\subset H_0^1(\Omega)$. 
%\textcolor{red}{The fourteen DoFs $W_h$ on each tetrahedron $T$-vertex values, edge moments, and face normal derivatives-are shown in Fig~\ref{fig:DofWh}.}

\begin{theorem}\label{discretecomplex}
Assume that the domain $\Omega$ is contractible.
	The finite element complex \eqref{discretecomplex2} is exact.
\end{theorem}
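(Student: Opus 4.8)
The plan is to verify exactness of the complex \eqref{discretecomplex2} at each of the four stages, using a combination of dimension counting (via the Euler characteristic of the complex) and direct verification of the inclusions $\img \subseteq \ker$ together with injectivity/surjectivity at the ends. First I would observe the two ``easy'' ends: exactness at $W_h$ means $\nabla: W_h \to \Phi_h$ is injective, which follows because $W_h \subset H_0^1(\Omega)$ and a constant in $H_0^1$ vanishes; exactness at $\mathcal Q_h$ means $\div: V_h^{\div} \to \mathcal Q_h$ is surjective, which is the classical surjectivity of the divergence from the lowest-order Raviart--Thomas space onto piecewise constants with zero mean (already built into complex \eqref{discretecomplex3}). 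It should also be recorded that the maps are well-defined, i.e. $\nabla W_h \subseteq \Phi_h$ and $\curl \Phi_h \subseteq V_h^{\div}$: the first because $\nabla(\mathbb P_2(T)\oplus b_T\mathbb P_1(T)) \subseteq \mathbb P_1(T;\mathbb R^3) \oplus \nabla(b_T\mathbb P_1(T)) = \Phi(T)$ and the DoF-matching/tangential continuity of $W_h$ transfers to the edge and face DoFs of $\Phi_h$; the second because $\curl \Phi(T) = \curl \mathbb P_1(T;\mathbb R^3) \subseteq \mathbb P_0(T;\mathbb R^3)$ (the gradient part is killed by $\curl$) and the tangential continuity of $\Phi_h$ gives continuity of the normal component of $\curl$, i.e. $\curl\Phi_h \subseteq H_0(\div,\Omega)$, landing in $V_h^{\div}$.

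Next I would set up the dimension count. Using the standard counts of simplices in $\mathcal T_h$, one computes $\dim W_h$, $\dim \Phi_h$, $\dim V_h^{\div}$, $\dim \mathcal Q_h$ from the DoFs: $W_h$ has one DoF per interior vertex, one per interior edge, one per interior face; $\Phi_h$ has two DoFs per interior edge (from \eqref{Phi01}) and one per interior face (from \eqref{Phi02}); $V_h^{\div}$ one per interior face; $\mathcal Q_h$ has dimension $\#\mathcal T_h - 1$. The alternating sum of these dimensions equals the Euler characteristic of the complex, and I would verify it is $0$ using Euler's formula for a tetrahedral mesh of a contractible (or at least topologically simple) domain — equivalently, one can argue that \eqref{discretecomplex2} and the known exact complex \eqref{discretecomplex3} have the same alternating sum because $\dim\Phi_h - \dim V_h^{\rm ND}$ equals the number of interior faces of $\mathcal T_h$, which is exactly $\dim V_h^{\div}$, so the extra face DoFs in $\Phi_h$ are ``paid for'' by doubling the $V_h^{\div}$ slot in the bookkeeping; more cleanly, since \eqref{discretecomplex3} is exact its Euler characteristic is $0$, and replacing $V_h^{\rm ND}$ by $\Phi_h$ adds $\#\Delta_2(\mathring{\mathcal T}_h)$ to the count, which must be matched — here I would instead just directly compute and confirm the alternating sum vanishes. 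Granting exactness at the two ends and $\img\subseteq\ker$ at the two middle stages, the vanishing Euler characteristic forces exactness at the two middle stages as well, by the usual rank-nullity argument.

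So the substance reduces to two kernel inclusions and then one genuine argument. For $\ker(\curl|_{\Phi_h}) \subseteq \nabla W_h$: take $\boldsymbol v_h \in \Phi_h$ with $\curl_h \boldsymbol v_h = 0$. Since $\Phi_h \subset H_0(\curl,\Omega)$ and $\Omega$ is simply connected, $\boldsymbol v_h = \nabla \phi$ for some $\phi \in H_0^1(\Omega)$; the point is to show $\phi \in W_h$, i.e. $\phi$ is piecewise in $\mathbb P_2(T)\oplus b_T\mathbb P_1(T)$ and satisfies the DoF single-valuedness of \eqref{WTDoFs}. On each $T$, $\boldsymbol v_h = \boldsymbol v_1 + \nabla(b_T p)$ with $\boldsymbol v_1 \in \mathbb P_1(T;\mathbb R^3)$ curl-free hence $\boldsymbol v_1 = \nabla q$ with $q \in \mathbb P_2(T)$, so $\phi|_T = q + b_T p + \text{const} \in \mathbb P_2(T)\oplus b_T\mathbb P_1(T)$; the vertex and edge DoFs \eqref{w1}, \eqref{w2} of $\phi$ are single-valued because $\phi$ is continuous (being in $H^1$), and the face DoF \eqref{w3} $\int_F \partial_n \phi \,dS = \int_F \boldsymbol v_h \cdot \boldsymbol n\, dS$ is single-valued precisely by the weak continuity \eqref{Phiweakcontinuity1} of $\Phi_h$; the boundary DoFs vanish because $\boldsymbol v_h$ has vanishing boundary DoFs, hence $\phi \in W_h$. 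Similarly for $\ker(\div|_{V_h^{\div}}) \subseteq \curl\Phi_h$: given $\boldsymbol q_h \in V_h^{\div}$ with $\div\boldsymbol q_h = 0$, simple-connectedness gives $\boldsymbol q_h = \curl\boldsymbol w$ for $\boldsymbol w \in H_0(\curl,\Omega)$; using the commuting projection $\Pi_h^{\rm ND}$ from \eqref{dcommutesdeRham} one has $\boldsymbol q_h = \curl(\Pi_h^{\rm ND}\boldsymbol w)$ and $\Pi_h^{\rm ND}\boldsymbol w \in V_h^{\rm ND} \subset \Phi_h$, so $\boldsymbol q_h \in \curl\Phi_h$. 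I expect the main obstacle — really the only nonroutine point — to be organizing the dimension count cleanly and being careful with the topological hypothesis on $\Omega$ (simple-connectedness / vanishing first Betti number) that underlies both the scalar potential in the $\curl$-kernel step and Euler's-formula identity; with that in hand, everything else is bookkeeping plus the already-established Lemma~\ref{lemPhiT}, the weak continuity \eqref{Phiweakcontinuity1}, and the commuting diagram \eqref{dcommutesdeRham}.
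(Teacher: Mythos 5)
Your treatment of exactness at $\Phi_h$ is sound and is essentially the paper's argument: a curl-free $\boldsymbol v_h\in\Phi_h\subset H_0(\curl,\Omega)$ is $\nabla\phi$ for $\phi\in H_0^1(\Omega)$, the local shape functions force $\phi|_T\in\mathbb P_2(T)\oplus b_T\mathbb P_1(T)$, and the face DoF \eqref{w3} is single-valued precisely by the weak continuity \eqref{Phiweakcontinuity1}. The gap is in your direct argument for exactness at $V_h^{\div}$: the inclusion $V_h^{\rm ND}\subset\Phi_h$ is false. Membership in $\Phi_h$ requires the face DoFs \eqref{Phi02} to be single-valued across interior faces and to vanish on boundary faces, whereas a second-kind N\'ed\'elec function is only tangentially continuous and its normal flux $\int_F\boldsymbol v\cdot\boldsymbol n\dd S$ generically jumps across $F$ (e.g.\ $\boldsymbol v|_{T_1}=\nabla\lambda_0$ with $\texttt v_0$ opposite the shared face $F$ and $\boldsymbol v|_{T_2}=0$ has matching edge DoFs on $F$ but unequal normal fluxes). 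So $\Pi_h^{\rm ND}\boldsymbol w$ need not lie in $\Phi_h$, and the step ``$\boldsymbol q_h=\curl(\Pi_h^{\rm ND}\boldsymbol w)\in\curl\Phi_h$'' does not follow. Your fallback, ``vanishing Euler characteristic plus end-exactness forces exactness at both middle stages,'' is also too strong on its own: it only yields $\dim H^1=\dim H^2$, not that both vanish.

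The repair is already contained in what you wrote, and it is exactly the paper's route: having proved $\Phi_h\cap\ker(\curl)=\nabla W_h$ directly, you get $\dim\curl\Phi_h=\dim\Phi_h-\dim W_h=|\Delta_1(\mathring{\mathcal T}_h)|-|\Delta_0(\mathring{\mathcal T}_h)|$; the exactness of \eqref{discretecomplex3} gives $\dim\bigl(V_h^{\div}\cap\ker(\div)\bigr)=|\Delta_2(\mathring{\mathcal T}_h)|-|\mathcal T_h|+1$; and Euler's formula shows these two dimensions agree, so the inclusion $\curl\Phi_h\subseteq V_h^{\div}\cap\ker(\div)$ is an equality. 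In other words, use the dimension count for the $V_h^{\div}$ stage only, after the $\Phi_h$ stage has been settled directly. (Alternatively, your direct argument can be salvaged by correcting $\Pi_h^{\rm ND}\boldsymbol w$ with elementwise gradient bubbles $\nabla(b_Tp_T)$, which do not change the curl and whose face DoFs can be prescribed arbitrarily by the computation in Lemma~\ref{lemPhiT}, so as to enforce the single-valuedness of \eqref{Phi02}; but this is extra work the dimension count renders unnecessary.)
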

\begin{proof}
	By the definitions of $W_{h}$ and $\Phi_h$, it is clear that \eqref{discretecomplex2} is a complex. Then we show that the complex is exact.
	
	Take $\boldsymbol{\phi}_h \in \Phi_h\cap \ker(\curl)$. Since $H_0(\curl,\Omega)\cap\ker(\curl)=\nabla H_0^1(\Omega)$~(cf. \cite[Theorem 1.1]{CostabelMcIntosh2010} and \cite[(2.14)]{ArnoldFalkWinther2006}) and $\Phi_h \subset H_0(\curl,\Omega)$, there exists a $w_h\in H_0^1(\Omega)$ such that  $\boldsymbol{\phi}_h= \nabla w_h$ and $w_h|_T \in \mathbb{P}_{2}(T) \,\oplus\,  b_T\,\mathbb{P}_{1}(T)$.
	Apply the weak continuity \eqref{Phiweakcontinuity1} to have $w_h\in W_h$.
	That is $\Phi_h\cap\ker(\curl)=\nabla W_h$. This also means that 
	\begin{align*}
		\dim\curl \Phi_h \,&=\,\dim \Phi_h -\dim W_h = \,|\Delta_{1}(\mathring{\mathcal T}_h)|- |\Delta_{0}(\mathring{\mathcal T}_h)|.
	\end{align*}
	
	It follows from the exactness of the finite element de Rham complex~\cite[Section 5.5]{ArnoldFalkWinther2006} that 
	$\div V_{h}^{\div}=\mathcal{Q}_{h}$, and
	\begin{equation*}
		\dim V_{h}^{\div}\cap\ker(\div)
		=\dim V_{h}^{\div}-\dim \mathcal{Q}_{h}=|\Delta_{2}(\mathring{\mathcal T}_h)|-|\mathcal{T}_{h}| +1.
	\end{equation*}
	Employing the Euler's formula, we have
	\begin{equation*}
		\dim V_{h}^{\div}\cap\ker(\div)-\dim\curl \Phi_h=|\Delta_{0}(\mathring{\mathcal T}_h)|-|\Delta_{1}(\mathring{\mathcal T}_h)|+|\Delta_{2}(\mathring{\mathcal T}_h)|-|\mathcal{T}_{h}| +1=0.
	\end{equation*}
	Therefore, $V_{h}^{\div}\cap\ker(\div)=\curl \Phi_h$, which ends the proof.
\end{proof}

\subsection{Interpolation operators and commutative diagram}
Based on the DoFs given in~\eqref{PhiTDoFs}, we define the interpolation operator $I_h^{\Phi}: H_0^1(\Omega;\mathbb{R}^{3})\rightarrow \Phi_h$ as follows:
\begin{subequations}\label{Iphiinterpolations}
	\begin{align}
		\int_{e}(I_h^{\Phi}\boldsymbol{v})\cdot \boldsymbol{t}\,q\dd s&=\int_{e}(\Pi_h^{\rm ND}\boldsymbol{v})\cdot \boldsymbol{t}\,q\dd s,\quad \forall\, q\in\mathbb{P}_{1}(e), \,e\in \Delta_{1}(\mathring{\mathcal T}_h)\label{Phi1},\\
		\int_{F} (I_h^{\Phi}\boldsymbol{v})\cdot \boldsymbol{n} \dd S&=\int_{F} \boldsymbol{v}\cdot \boldsymbol{n} \dd S ,\qquad\qquad F\in \Delta_{2}(\mathring{\mathcal T}_h) .\label{Phi2}
	\end{align}
\end{subequations}
Based on the DoFs \eqref{WTDoFs}, we define the interpolation operator $I_h^{\rm W}: H_0^2(\Omega) \to W_h$ as follows:
\begin{subequations}\label{Ihw}
	\begin{align}
		(I_h^{\rm W}v)\,(\texttt{v})&=(\Pi^{\grad}_h v)\,(\texttt{v}),\quad \texttt{v}\in \Delta_{0}(\mathring{\mathcal T}_h)\label{Ihw1},\\
		\int_{e} I_h^{\rm W}v \dd s&=\int_{e}\Pi^{\grad}_h v \dd s,\quad e\in \Delta_{1}(\mathring{\mathcal T}_h)\label{Ihw2},\\
		\int_{F}\partial_n{(I_h^{\rm W}v )}\dd S&=\int_{F}\partial_nv\dd S ,\quad\;\;\; F\in \Delta_{2}(\mathring{\mathcal T}_h).\label{Ihw3}
	\end{align}
\end{subequations}
%\begin{lemma}\label{errIhw}
%For $v\in H_0^{2}(\Omega) \cap H^{s}(\Omega)$ with  $0\leq j\leq2$ and $2\leq s\leq3$. We have that 
%\begin{equation}\label{eq:Ihw1}
%\sum_{T\in\mathcal T_h}\\|v-I_h^{\rm W}v\|_{j,T}%+h_T|\boldsymbol{v}-I_h^{\div}\boldsymbol{v}|_{1,T}
%\lesssim h^{s-j}_{T}|v|_{s,T}.
%\end{equation}
%\end{lemma} 
\begin{lemma}\label{Ihphi}
	For $\boldsymbol{v}\in H_0^{1}(\Omega;\mathbb{R}^{3}) \cap H^{s}(\Omega;\mathbb{R}^{3})$ with $1\leq s\leq 2$. We have for $T\in\mathcal T_h$ that 
	\begin{equation}\label{BEqs}
		\|\boldsymbol{v}-I_h^{\Phi}\boldsymbol{v}\|_{0,T} + h_{T}|\boldsymbol{v}-I_h^{\Phi}\boldsymbol{v}|_{1,T}\lesssim h_{T}^{s}|\boldsymbol{v}|_{s,\omega_T}.
	\end{equation}
\end{lemma}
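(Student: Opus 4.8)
The plan is to establish the interpolation error estimate \eqref{BEqs} by a standard Bramble--Hilbert / scaling argument, but routed through the companion operator $\Pi_h^{\rm ND}$ so as to exploit the already-known estimate \eqref{eq:IhND2}. First I would observe that the edge degrees of freedom \eqref{Phi01} defining $I_h^\Phi$ coincide exactly with those of the second-kind N\'ed\'elec element, while the face degrees of freedom \eqref{Phi02} are only the \emph{mean} of the normal component. Hence $I_h^\Phi$ and $\Pi_h^{\rm ND}$ agree on all edge functionals but differ on the face functionals. The natural strategy is to write $\boldsymbol{v}-I_h^\Phi\boldsymbol{v} = (\boldsymbol{v}-\Pi_h^{\rm ND}\boldsymbol{v}) + (\Pi_h^{\rm ND}\boldsymbol{v}-I_h^\Phi\boldsymbol{v})$ and bound the two pieces separately. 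The first piece is controlled directly by \eqref{eq:IhND2}. For the second piece, $\boldsymbol{z}_T := (\Pi_h^{\rm ND}\boldsymbol{v}-I_h^\Phi\boldsymbol{v})|_T \in \Phi(T)$ has vanishing edge DoFs \eqref{Phi01} (since $I_h^\Phi$ matches $\Pi_h^{\rm ND}$ on edges), so by the unisolvence argument inside the proof of Lemma~\ref{lemPhiT}, its $\mathbb{P}_1(T;\mathbb{R}^3)$-part is zero and $\boldsymbol{z}_T = \nabla(b_T p_T)$ for some $p_T\in\mathbb{P}_1(T)$; moreover its face DoFs are $\int_F \boldsymbol{z}_T\cdot\boldsymbol{n}\dd S = \int_F(\boldsymbol{v}-\Pi_h^{\rm ND}\boldsymbol{v})\cdot\boldsymbol{n}\dd S$.

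Next I would quantify $\boldsymbol{z}_T$. On the reference element, the linear map sending the four face-mean functionals to the coefficients of $\boldsymbol{z}_T$ (equivalently to $p_T$, via the explicit relations $C_1+C_2+C_3 = \cdots$ appearing in the proof of Lemma~\ref{lemPhiT}, which form an invertible system) is a fixed isomorphism, so
\begin{equation*}
\|\hat{\boldsymbol{z}}\|_{0,\hat T} + |\hat{\boldsymbol{z}}|_{1,\hat T} \lesssim \sum_{\hat F\in\Delta_2(\hat T)}\Big|\int_{\hat F}\hat{\boldsymbol{z}}\cdot\hat{\boldsymbol{n}}\dd\hat S\Big|.
\end{equation*}
Pulling back to $T$ with the standard affine scaling for $H(\div)$-type quantities, and using a trace/scaling estimate $\big|\int_F (\boldsymbol{v}-\Pi_h^{\rm ND}\boldsymbol{v})\cdot\boldsymbol{n}\dd S\big| \lesssim h_T^{-1/2}\|\boldsymbol{v}-\Pi_h^{\rm ND}\boldsymbol{v}\|_{0,\partial T} \lesssim h_T^{-1}\|\boldsymbol{v}-\Pi_h^{\rm ND}\boldsymbol{v}\|_{0,T} + \|\boldsymbol{v}-\Pi_h^{\rm ND}\boldsymbol{v}\|_{0,T}^{1/2}|\boldsymbol{v}-\Pi_h^{\rm ND}\boldsymbol{v}|_{1,T}^{1/2}$ combined again with \eqref{eq:IhND2}, one gets $\|\boldsymbol{z}_T\|_{0,T} + h_T|\boldsymbol{z}_T|_{1,T} \lesssim h_T^s|\boldsymbol{v}|_{s,\omega_T}$. (Here one must be slightly careful because $\Pi_h^{\rm ND}$ is only $L^2$-bounded and not locally defined, so the trace estimate should be applied on $T$ directly rather than attempting a genuinely local Bramble--Hilbert argument; the patch $\omega_T$ in \eqref{eq:IhND2} already accommodates this non-locality.) Combining the two pieces via the triangle inequality yields \eqref{BEqs}.

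The main obstacle I anticipate is handling the non-locality of $\Pi_h^{\rm ND}$ cleanly: one cannot write a purely element-local estimate for $\boldsymbol{v}-\Pi_h^{\rm ND}\boldsymbol{v}$, so the face-DoF bound for $\boldsymbol{z}_T$ must be expressed in terms of $\|\boldsymbol{v}-\Pi_h^{\rm ND}\boldsymbol{v}\|$ on $T$ (or $\omega_T$) and then fed through \eqref{eq:IhND2}, which forces the appearance of $\omega_T$ on the right-hand side of \eqref{BEqs} — consistent with the statement. A secondary technical point is verifying that the face-mean-to-coefficient map is indeed invertible on $\Phi(T)$ restricted to $\nabla(b_T\mathbb{P}_1(T))$; this is exactly the computation already carried out in Lemma~\ref{lemPhiT} (the $4\times 4$ system in the $C_i$), so it can simply be cited. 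Everything else — the affine scaling of $H(\div)$-conforming quantities, the multiplicative trace inequality, and the Bramble--Hilbert estimate on $\hat T$ — is routine.
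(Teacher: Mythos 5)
Your proposal is correct and follows essentially the same route as the paper: split $\boldsymbol{v}-I_h^{\Phi}\boldsymbol{v}$ through $\Pi_h^{\rm ND}\boldsymbol{v}$, note that the difference $\Pi_h^{\rm ND}\boldsymbol{v}-I_h^{\Phi}\boldsymbol{v}$ lies in $\Phi(T)$ with vanishing edge DoFs and is therefore controlled via a scaling argument by its face means $\int_F(\Pi_h^{\rm ND}\boldsymbol{v}-\boldsymbol{v})\cdot\boldsymbol{n}\dd S$, then apply the trace inequality and \eqref{eq:IhND2}. The only cosmetic difference is that you spell out the reduction to $\nabla(b_T p_T)$ and the reference-element isomorphism explicitly, whereas the paper compresses this into one inverse-inequality/scaling step.
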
 
\begin{proof}
	By an inverse inequality~\cite[Lemma 4.5.3]{BrennerScott2008}, and a scaling argument~\cite[Section 3.1]{Ciarlet1978}, we have
	\begin{align*}%\label{BEY2}
		&\quad\;\,  \|\Pi_h^{\rm ND}\boldsymbol{v}-I_h^{\Phi}\boldsymbol{v}\|_{0,T}^2 + h_T^2|\Pi_h^{\rm ND}\boldsymbol{v}-I_h^{\Phi}\boldsymbol{v}|_{1,T}^2 \\
		&\lesssim h_T \sum_{F\in\Delta_2(T)}\|Q_{F}^0((\Pi_h^{\rm ND}\boldsymbol{v}-I_h^{\Phi}\boldsymbol{v})\cdot \boldsymbol{n})\|_{0,F}^2 \leq h_T \|\boldsymbol{v}-\Pi_h^{\rm ND}\boldsymbol{v}\|_{0,\partial T}^2 ,
	\end{align*}
	which together with a trace inequality~\cite[Theorem 1.6.6]{BrennerScott2008} implies
	\begin{equation*}
		\|\boldsymbol{v}-I_h^{\Phi}\boldsymbol{v}\|_{0,T} + h_T|\boldsymbol{v}-I_h^{\Phi}\boldsymbol{v}|_{1,T}\lesssim \|\boldsymbol{v}-\Pi_h^{\rm ND}\boldsymbol{v}\|_{0,T} + h_T|\boldsymbol{v}-\Pi_h^{\rm ND}\boldsymbol{v}|_{1,T}.
	\end{equation*}
	Hence, \eqref{BEqs} follows from the last inequality and \eqref{eq:IhND2}.
\end{proof}

We can obtain the following commutative diagram.
\begin{lemma}
	It holds the commutative diagram:
	\begin{equation}\label{dcommutes2}
		\resizebox{0.8\hsize}{!}{$
			\begin{array}{c}
				\xymatrix{
					0\ar[r]^-{} &H_0^{2}(\Omega)\ar[r]^-{\nabla} \ar[d]^{I^W_h}&  H_0^{1}(\Omega;\mathbb{R}^{3}) \ar[r]^-{\curl} \ar[d]^{I^{\Phi}_h}&  H_0(\div,\Omega) \ar[r]^-{\div} \ar[d]^{I^{\rm RT}_h}&  L_0^{2}(\Omega) \ar[r]^-{} \ar[d]^{Q_h}& 0\\
					0\ar[r]^-{} &W_{h}\ar[r]^-{\nabla} & \Phi_{h}\ar[r]^-{\curl} & V_{h}^{\div}\ar[r]^-{\div } & \mathcal{Q}_{h} \ar[r]^-{} & 0
				}
			\end{array}
			$}.
	\end{equation}
\end{lemma}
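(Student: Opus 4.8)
The plan is to check the diagram one square at a time, from right to left. The rightmost square $Q_h\circ\div=\div\circ I_h^{\rm RT}$ on $H_0(\div,\Omega)$ is already part of the finite element de Rham diagram~\eqref{dcommutesdeRham}, so nothing new is needed there; it also costs nothing to record that the horizontal arrows land in the claimed spaces, since $\nabla$ maps $H_0^2(\Omega)$ into $H_0^1(\Omega;\mathbb R^3)$, and for $\boldsymbol v\in H_0^1(\Omega;\mathbb R^3)$ one has $\div\curl\boldsymbol v=0$ and $\curl\boldsymbol v\cdot\boldsymbol n=0$ on $\partial\Omega$ (the surface curl of the vanishing tangential trace $\boldsymbol v\times\boldsymbol n$), hence $\curl\boldsymbol v\in H_0(\div,\Omega)$, while $\div$ of an element of $H_0(\div,\Omega)$ has zero mean. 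So only the left square $I_h^\Phi\circ\nabla=\nabla\circ I_h^{\rm W}$ and the middle square $\curl\circ I_h^\Phi=I_h^{\rm RT}\circ\curl$ require work. In both cases the two composites land in $\Phi_h$, respectively in $V_h^{\div}$, so for the left square it suffices to compare the defining DoFs~\eqref{PhiTDoFs} of $\Phi(T)$, whereas for the middle square I will instead reduce directly to~\eqref{dcommutesdeRham}.

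For the middle square I would use the algebraic structure of $\Phi(T)$. Writing $I_h^\Phi\boldsymbol v|_T=\boldsymbol v_1+\nabla(b_T p_T)$ with $\boldsymbol v_1\in\mathbb P_1(T;\mathbb R^3)$ and $p_T\in\mathbb P_1(T)$, observe that $b_T p_T$ vanishes identically on every edge of $T$, so its tangential derivative along each edge vanishes and the gradient enrichment contributes nothing to the edge DoFs~\eqref{Phi01}. Matching the edge DoFs of $I_h^\Phi\boldsymbol v$ with the prescription~\eqref{Phi1} and invoking the unisolvence of the second-kind N\'{e}d\'{e}lec element then forces $\boldsymbol v_1=(\Pi_h^{\rm ND}\boldsymbol v)|_T$. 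Since $\curl\nabla=0$, this gives $\curl_h(I_h^\Phi\boldsymbol v)=\curl_h(\Pi_h^{\rm ND}\boldsymbol v)$ elementwise; as $\Phi_h$ and $V_h^{\rm ND}$ both lie in $H_0(\curl,\Omega)$, the elementwise and distributional curls coincide, so $\curl(I_h^\Phi\boldsymbol v)=\curl(\Pi_h^{\rm ND}\boldsymbol v)$, which by~\eqref{dcommutesdeRham} equals $I_h^{\rm RT}\curl\boldsymbol v$. This route never touches the face DoFs.

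For the left square, both $I_h^\Phi\nabla v$ and $\nabla I_h^{\rm W}v$ lie in $\Phi_h$ (recall $\nabla W_h\subset\Phi_h$ from Theorem~\ref{discretecomplex}), so it remains to check they agree under the DoFs~\eqref{PhiTDoFs}. The face DoFs~\eqref{Phi02} are immediate: for $I_h^\Phi\nabla v$ the value is $\int_F\nabla v\cdot\boldsymbol n\dd S=\int_F\partial_n v\dd S$ by~\eqref{Phi2}, which is exactly the value for $\nabla I_h^{\rm W}v$ by~\eqref{Ihw3}. For the edge DoFs~\eqref{Phi01}, the key point is that the quartic-bubble part $b_T\mathbb P_1(T)$ of $W_h$ vanishes on edges, so $(I_h^{\rm W}v)|_e\in\mathbb P_2(e)$ is determined by its two vertex values and its edge average; by~\eqref{Ihw1}--\eqref{Ihw2} these coincide with those of $(\Pi^{\grad}_h v)|_e$, hence $(I_h^{\rm W}v)|_e=(\Pi^{\grad}_h v)|_e$ and the tangential derivatives along $e$ agree. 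Combining this with $\nabla\Pi^{\grad}_h v=\Pi_h^{\rm ND}\nabla v$ from~\eqref{dcommutesdeRham} and the definition~\eqref{Phi1} of the edge DoFs of $I_h^\Phi$ closes the edge case, and hence the left square.

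The computations are routine; the one place I would be careful is the bookkeeping around the smoothed projections $\Pi^{\grad}_h$ and $\Pi_h^{\rm ND}$. These are not the canonical nodal interpolants, so phrases like ``the restriction of $\Pi^{\grad}_h v$ to $e$'' or ``the N\'{e}d\'{e}lec DoF of $\Pi_h^{\rm ND}\boldsymbol v$'' are legitimate only because the outputs belong to finite element spaces on which the canonical DoFs are unisolvent, so a finite element function is pinned down by them regardless of how the projection was constructed. Granting that, together with the well-definedness of edge and face integrals of traces of $H^1$ vector fields (and of $\partial_n v$ for $v\in H_0^2(\Omega)$), there is no genuine obstacle: the whole argument is a short DoF chase resting on~\eqref{dcommutesdeRham} and on the two structural facts that $b_T$ annihilates the edge DoFs and that $\nabla W_h\subset\Phi_h$.
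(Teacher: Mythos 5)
Your proposal is correct and follows essentially the same strategy as the paper: reduce everything to the known diagram \eqref{dcommutesdeRham} and verify the two new squares, with the left square handled by exactly the same DoF comparison (faces via \eqref{Phi2} and \eqref{Ihw3}, edges via the agreement of $I_h^{\rm W}v$ and $\Pi_h^{\grad}v$ along edges). The only variation is in the middle square: the paper shows the difference $\curl(I_h^{\Phi}\boldsymbol{v})-\curl(\Pi_h^{\rm ND}\boldsymbol{v})\in V_h^{\div}$ has vanishing face DoFs by Stokes' theorem on each face together with \eqref{Phi01}, whereas you identify the $\mathbb{P}_1$ component of $I_h^{\Phi}\boldsymbol{v}$ with $\Pi_h^{\rm ND}\boldsymbol{v}$ (in effect proving the identity \eqref{INDIPhiPiND} that the paper only states later) and then use $\curl\nabla=0$; both routes are equally short and valid.
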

\begin{proof}
	Thanks to the diagram \eqref{dcommutesdeRham}, it suffices to prove
	\begin{align}\label{interpolation01}
		\nabla(I^W_h v) &= I^{\Phi}_h( \nabla v)\qquad\;\;\forall~v\in H_0^{2}(\Omega),\\
		\curl(I^{\Phi}_h\boldsymbol{v}) &=I^{\rm RT}_h(\curl \boldsymbol{v})\quad\forall\,\boldsymbol{v}\in H_0^{1}(\Omega;\mathbb{R}^{3}).\label{interpolation02}
	\end{align}
	
	For $v \in H_0^2(\Omega)$, by \eqref{Phi1}, $\nabla(\Pi^{\grad}_h v)=\Pi^{\rm ND}_h( \nabla v)$ from the diagram \eqref{dcommutesdeRham}, and \eqref{Ihw1}-\eqref{Ihw2}, we have
	\begin{align*}
		\int_{e}( \nabla(I^W_h v) -I^{\Phi}_h( \nabla v))\cdot \boldsymbol{t}\,q\dd s
		&=\int_{e}( \nabla(I^W_h v) -\Pi^{\rm ND}_h( \nabla v))\cdot \boldsymbol{t}\,q\dd s\\
		&=\int_{e}  \nabla(I^W_h v- \Pi^{\grad}_h v)\cdot \boldsymbol{t}\,q\dd s=0
	\end{align*}
	for any $q\in\mathbb{P}_{1}(e)$ and $e \in \Delta_1(\mathring{\mathcal T}_h)$.
	Similarly, we use \eqref{Phi2} and \eqref{Ihw3} to obtain
	\begin{align*}
		\int_{F}( \nabla(I^W_h v)-I^{\Phi}_h( \nabla v))\cdot \boldsymbol{n} \dd S
		=\int_{F}  \nabla(I^W_h v- v)\cdot \boldsymbol{n} \dd S =0,\quad \forall\, F\in \Delta_{2}(\mathring{\mathcal T}_h).
	\end{align*}
	Hence, all the DoFs \eqref{PhiTDoFs} of function $\nabla(I_h^{\rm W} v) - I_h^{\Phi}(\nabla v)\in\Phi_h$ vanish, that is~\eqref{interpolation01} holds.
	
	Next, for any $\boldsymbol{v} \in H_0^1(\Omega; \mathbb{R}^3)$, using the commutative diagram \eqref{dcommutesdeRham}, we have $I^{\rm RT}_h(\curl \boldsymbol{v})=\curl(\Pi^{\rm ND}_h \boldsymbol{v})$. Then for each $ F \in \Delta_2(\mathcal T_h)$, we get from the integration by parts on face $F$ and \eqref{Phi2} that
	\begin{align*}
		\int_{F} (\curl(I^{\Phi}_h\boldsymbol{v}) -I^{\rm RT}_h(\curl \boldsymbol{v}) ) \cdot \boldsymbol{n} \dd S
		=\int_{F} \curl(I^{\Phi}_h\boldsymbol{v} -\Pi^{\rm ND}_h \boldsymbol{v})\cdot \boldsymbol{n} \dd S = 0.
	\end{align*}
	This together with the fact $\curl(I_h^{\Phi} \boldsymbol{v}) - I_h^{\rm RT}(\curl \boldsymbol{v}) \in V_{h}^{\div}$ yields \eqref{interpolation02}.
\end{proof}

For constructing an optimal decoupled method for the fourth-order elliptic singular perturbation problem, we need to introduce the nodal interpolation operator for the finite element space $V_{h}^{\rm ND}$.
Let $I_h^{\rm ND}: H_0^{2}(\Omega;\mathbb R^3)+\Phi_{h}\to V_{h}^{\rm ND}$ be the nodal interpolation operator based on DoFs~\eqref{VNDDoFs}:
\begin{equation}\label{VNDinterp}
	\int_{e}((I_h^{\rm ND}\boldsymbol{v})\cdot \boldsymbol{t})\,q\dd s=\int_{e}(\boldsymbol{v}\cdot \boldsymbol{t})\,q\dd s,\quad \forall\, q\in\mathbb{P}_{1}(e), \,e\in \Delta_{1}(\mathring{\mathcal{T}}_h),
\end{equation}
and $I_h^{\grad}:H_0^{2}(\Omega)+W_{h}\to V_{h}^{\rm grad}$ be the nodal interpolation operator based on the DoFs~\eqref{Lagrange2DoFs}.
It can be verified that 
\begin{align}\label{INDIPhiPiND}
	I_h^{\rm ND}(I_h^{\Phi}\boldsymbol{v}) &=\Pi_h^{\rm ND}\boldsymbol{v} \quad\;\;\forall~\boldsymbol{v}\in H_0(\curl,\Omega), \\
	\notag
	I_h^{\grad}(I_h^{W}v) &=\Pi_h^{\grad}v \quad\;\forall~v\in H_0^{1}(\Omega).
\end{align}
Then
the commutative diagram \eqref{dcommutes2} can be extended to the following three-line commutative diagram
\begin{equation}\label{dcommutesthree}
	\resizebox{0.8\hsize}{!}{$
		\begin{array}{c}
			\xymatrix{
				0\ar[r]^-{} &H_0^{2}(\Omega)\ar[r]^-{\nabla} \ar[d]^{I^W_h}&  H_0^{1}(\Omega;\mathbb{R}^{3}) \ar[r]^-{\curl} \ar[d]^{I^{\Phi}_h}&  H_0(\div,\Omega) \ar[r]^-{\div} \ar[d]^{I^{\rm RT}_h}&  L_0^{2}(\Omega) \ar[r]^-{} \ar[d]^{Q_h}& 0
				\\
				0\ar[r]^-{} &W_{h}\ar[r]^-{\nabla} \ar[d]^{I_h^{\grad}}& \Phi_{h} \ar[r]^-{\curl} \ar[d]^{I^{\rm ND}_h}&  V_{h}^{\div}\ar[r]^-{\div} \ar[d]^{I}& \mathcal{Q}_{h} \ar[r]^-{} \ar[d]^{I}& 0\\
				0\ar[r]^-{} & V_{h}^{\rm grad}\ar[r]^-{\nabla} & V_{h}^{\rm ND}\ar[r]^-{\curl} &V_{h}^{\div}
				\ar[r]^-{\div } &\mathcal{Q}_{h} \ar[r]^-{} & 0
			}
		\end{array}
		$}.
\end{equation}
%   Let $\Pi_h:  H_0^1(\Omega)\rightarrow V_{h}^{\rm grad}$ be the Scott-Zhang interpolation operator   \cite{ScottZhang1990}.
%   For any $v\in H_{0}^{1}(\Omega)\cap H^{s}(\Omega)$ with $1\leq s\leq 3$,  $0\leq m\leq s$ and $T\in\mathcal T_h$, it holds \cite{ScottZhang1990}
%   \begin{align}\label{ScottZhang}
	%   |\boldsymbol{v}-\Pi_h\boldsymbol{v}|_{m,T}\lesssim h_{T}^{s-m}|\boldsymbol{v}|_{s,\omega_{T}}.
	%   \end{align}

Applying the standard interpolation error analysis \cite[Theorem 3.1.4]{Ciarlet1978} and \cite[Proposition 2.2.1]{BoffiBrezziFortin2013} together with an inverse inequality \cite[Lemma 4.5.3]{BrennerScott2008}, we have the following estimate for $I_h^{\rm ND}$.
\begin{lemma}
	For $\boldsymbol{v}\in \Phi_{h}$ with $0\leq j\leq 2$. We have that 
	\begin{equation}
		\|\boldsymbol{v}-I_h^{\rm ND}\boldsymbol{v}\|_{0}
		\lesssim h^{j}|\boldsymbol{v}|_{j,h}.\label{eq:IhNDestimes}
	\end{equation}
\end{lemma}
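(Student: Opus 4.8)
The plan is to prove the estimate \eqref{eq:IhNDestimes} by a standard Bramble--Hilbert / scaling argument, but carried out carefully on the reference element since $\boldsymbol v$ lives in the nonstandard space $\Phi(T)=\mathbb P_1(T;\mathbb R^3)\oplus\nabla(b_T\mathbb P_1(T))$ rather than in a fixed polynomial space of bounded degree. Fix $T\in\mathcal T_h$ and work locally, since $I_h^{\rm ND}$ is defined edge by edge via \eqref{VNDinterp} and $\boldsymbol v\cdot\boldsymbol t$ is single-valued on interior edges, so $(I_h^{\rm ND}\boldsymbol v)|_T$ depends only on $\boldsymbol v|_T$. First I would observe that $I_h^{\rm ND}$ reproduces $\mathbb P_1(T;\mathbb R^3)$: indeed for $\boldsymbol w\in\mathbb P_1(T;\mathbb R^3)$ the edge DoFs \eqref{VNDDoFs} determine $\boldsymbol w$ itself (unisolvence of the second-kind N\'ed\'elec element), so $I_h^{\rm ND}\boldsymbol w=\boldsymbol w$. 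Consequently, writing $\boldsymbol v=\boldsymbol v_1+\nabla(b_T p)$ with $\boldsymbol v_1\in\mathbb P_1(T;\mathbb R^3)$ and $p\in\mathbb P_1(T)$, we get $\boldsymbol v-I_h^{\rm ND}\boldsymbol v=\nabla(b_Tp)-I_h^{\rm ND}(\nabla(b_Tp))$, so the whole error comes from the bubble part.

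The next step is to estimate this bubble contribution on the reference tetrahedron $\widehat T$. On $\widehat T$ the space $\nabla(b_{\widehat T}\mathbb P_1(\widehat T))$ is a fixed finite-dimensional space of polynomials (degree $\le 4$), and $\widehat I^{\rm ND}$ is a bounded linear operator on it; moreover $\widehat I^{\rm ND}$ annihilates $\nabla(b_{\widehat T}\mathbb P_1(\widehat T))$? No --- it does not annihilate it, but the key point is that $\|\widehat{\boldsymbol\phi}-\widehat I^{\rm ND}\widehat{\boldsymbol\phi}\|_{0,\widehat T}\lesssim\|\widehat{\boldsymbol\phi}\|_{0,\widehat T}$ for all $\widehat{\boldsymbol\phi}$ in this fixed finite-dimensional space, by equivalence of norms; and since $b_{\widehat T}p$ vanishes on $\partial\widehat T$ together with its tangential derivatives along edges only up to... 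Actually the cleanest route: on the fixed space $\widehat\Phi(\widehat T)$, the seminorm $\widehat{\boldsymbol\phi}\mapsto\|\widehat{\boldsymbol\phi}-\widehat I^{\rm ND}\widehat{\boldsymbol\phi}\|_{0,\widehat T}$ is a continuous seminorm that vanishes on $\mathbb P_1(\widehat T;\mathbb R^3)$, hence by the Bramble--Hilbert lemma on the finite-dimensional quotient space $\widehat\Phi(\widehat T)/\mathbb P_1(\widehat T;\mathbb R^3)$ it is bounded by $|\widehat{\boldsymbol\phi}|_{j,\widehat T}$ for $j=0,1,2$ (for $j=0$ this is just continuity plus the fact that the operator reproduces $\mathbb P_1$, so actually we only get $\lesssim\|\widehat{\boldsymbol\phi}\|_{0,\widehat T}$ for $j=0$, and $\lesssim|\widehat{\boldsymbol\phi}|_{j,\widehat T}$ for $j=1,2$ using that constants and linears are reproduced). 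Then I would pull back to $T$ via the affine map $F_T:\widehat T\to T$, using the covariant (Piola) transformation under which both $\nabla$-type spaces and the tangential edge DoFs — hence $I^{\rm ND}$ itself — are natural, so $\widehat{I_h^{\rm ND}\boldsymbol v}=\widehat I^{\rm ND}\widehat{\boldsymbol v}$, and track the powers of $h_T$ arising from the Jacobian and from the $H^j$ seminorms under scaling (regularity of the mesh gives $\|DF_T\|\eqsim h_T$, $\|DF_T^{-1}\|\eqsim h_T^{-1}$, $|\det DF_T|\eqsim h_T^3$), producing $\|\boldsymbol v-I_h^{\rm ND}\boldsymbol v\|_{0,T}\lesssim h_T^{j}|\boldsymbol v|_{j,T}$ for $j=0,1,2$.

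Finally, summing the squares of the local estimates over $T\in\mathcal T_h$ gives \eqref{eq:IhNDestimes}. The main obstacle — and the only place requiring care — is the transformation step: one must use the right (covariant) transformation so that the nodal N\'ed\'elec interpolation commutes with pullback, and one must verify that the enriched bubble part $\nabla(b_Tp)$ transforms into (a subspace of) $\widehat\Phi(\widehat T)$ under this map, which it does because $\nabla$ and the bubble $b_T$ are affine-covariant up to the Jacobian scaling. Once that is in place, the $h_T$-power bookkeeping and the summation over elements are entirely routine, and the use of the inverse inequality only enters implicitly (it is not actually needed here, since $\boldsymbol v-I_h^{\rm ND}\boldsymbol v$ is handled directly), so the statement follows cleanly.
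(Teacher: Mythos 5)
Your argument is correct and is essentially the proof the paper has in mind: the paper gives no details beyond citing ``standard interpolation error analysis and the inverse inequality,'' and your scaling/Bramble--Hilbert argument on the fixed reference shape space $\widehat\Phi(\widehat T)$ --- using that $I_h^{\rm ND}$ is local, reproduces $\mathbb{P}_1(T;\mathbb{R}^3)$, commutes with the covariant pullback, and is bounded on the finite-dimensional space by norm equivalence --- is exactly that standard analysis spelled out, with the restriction to $\boldsymbol v\in\Phi_h$ being what makes the edge DoFs well defined. Your remark that the inverse inequality is not explicitly needed is fair; it is absorbed into the norm-equivalence step on the reference element.
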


	\section{Decoupled formulation of the fourth-order elliptic singular perturbation problem}\label{DMFE}
In this section, we present  a decoupled variational formulation for the fourth-order elliptic singular perturbation problem \eqref{eqtwo} in three dimensions, and use the smooth de Rham complex~\eqref{deRham2} to establish the well-posedness of the decoupled formulation.

\subsection{The decoupled variational formulation and equivalence}
The primal formulation of  problem \eqref{eqtwo} is to find $u \in H_{0}^{2}(\Omega)$ such that
\begin{align}\label{AA0}
	\varepsilon^2(\nabla^{2} u, \nabla^{2} v)+(\nabla u, \nabla v)=(f, v) \quad \forall\,v \in H_{0}^{2}(\Omega).
\end{align}

Applying the framework in \cite{ChenHuang2018}, the formulation \eqref{AA0} can be equivalently decoupled as follows: find $ w\in H_{0}^{1}(\Omega)$,
$\boldsymbol{\phi}\in H_{0}^{1}(\Omega;\mathbb{R}^{3})$,
$\boldsymbol{p}\in L^{2}(\Omega;\mathbb{R}^{3})/\nabla H^{1}(\Omega)$ and
$u\in H_{0}^{1}(\Omega)$ such that
\begin{align*}
	(\nabla w,\nabla v)&=(f,v) &&\forall \,\, v\in H_{0}^{1}(\Omega),\\
	\varepsilon^2(\nabla\boldsymbol{\phi},\nabla\boldsymbol{\psi})+(\boldsymbol{\phi},\boldsymbol{\psi})+(\curl\boldsymbol{\psi}, \boldsymbol{p})&=(\nabla w,\boldsymbol{\psi})
	&&\forall \,\, \boldsymbol{\psi}\in  H_{0}^{1}(\Omega;\mathbb{R}^{3}),\\
	(\curl\boldsymbol{\phi}, \boldsymbol{q}) &=0  &&\forall \,\, \boldsymbol{q} \in L^{2}(\Omega;\mathbb{R}^{3})/\nabla H^{1}(\Omega),\\
	(\nabla u,\nabla \chi)&=(\boldsymbol{\phi},\nabla\chi) &&\forall \,\, \chi\in H_{0}^{1}(\Omega).
\end{align*}
This decoupled formulation is new. Related decouplings for the fourth-order elliptic singular perturbation problem in two dimensions and the biharmonic equation in three dimensions are discussed in \cite[Section 3.2]{ChenHuang2018}.

To eliminate the quotient space $L^2(\Omega; \mathbb{R}^3)/\nabla H^1(\Omega)$, we introduce a Lagrange multiplier, yielding the following equivalent formulation: find $u, w\in H_{0}^{1}(\Omega)$,
$\boldsymbol{\phi}\in H_{0}^{1}(\Omega;\mathbb{R}^{3})$,
$\boldsymbol{p}\in H_0(\div,\Omega)$ and $\lambda\in L_0^2(\Omega)$ such that
\begin{subequations}\label{decoupleA}
	\begin{align}
		(\nabla w,\nabla v)&=(f,v), \label{AB}\\
		\varepsilon^2(\nabla\boldsymbol{\phi},\nabla\boldsymbol{\psi})+(\boldsymbol{\phi},\boldsymbol{\psi})+(\curl  \boldsymbol{\psi},\boldsymbol{p})&=(\nabla w,\boldsymbol{\psi}), \label{AC}\\
		(\mu,\div\boldsymbol{p})&=0, \label{AC0}\\
		(\curl  \boldsymbol{\phi},\boldsymbol{q})-(\lambda, \div\boldsymbol{q}) &=0,\label{AD}\\
		(\nabla u,\nabla \chi)&=(\boldsymbol{\phi},\nabla\chi), \label{AE}
	\end{align}
	for any $v,\chi\in H_{0}^{1}(\Omega)$, $\boldsymbol{\psi}\in H_{0}^{1}(\Omega;\mathbb{R}^{3})$,
	$\boldsymbol{q}\in H_0(\div,\Omega)$ and $\mu\in L_0^2(\Omega)$.
\end{subequations}

\begin{remark}\rm
	Although the mixed formulation \eqref{decoupleA} is written as a single system, the equations \eqref{AB}-\eqref{AE} are in fact decoupled. In practice, it can be solved sequentially: one first computes $w$ from \eqref{AB}; then solves for $\boldsymbol{\phi}$, $\boldsymbol{p}$, and $\lambda$ using \eqref{AC}-\eqref{AD}; and finally obtains $u$ from \eqref{AE}.
\end{remark}

Next, we analyze the well-posedness of the decoupled formulation \eqref{decoupleA} and its equivalence to the primal formulation \eqref{AA0}. Both \eqref{AB} and \eqref{AE} are Poisson equations, which are evidently well-posed. We now focus on the well-posedness of the generalized singularly perturbed Stokes-type equation \eqref{AC}-\eqref{AD} involving the $\curl$ operator. To facilitate the analysis, we introduce the following bilinear forms
\begin{equation*}
	a(\boldsymbol{\phi},\lambda;\boldsymbol{\psi},\mu):=\varepsilon^2(\nabla\boldsymbol{\phi},\nabla\boldsymbol{\psi})+(\boldsymbol{\phi},\boldsymbol{\psi}),\quad b(\boldsymbol{\psi}, \mu;\boldsymbol{q}):=(\curl \boldsymbol{\psi}, \boldsymbol{q})
	-(\mu,\div\boldsymbol{q}),
\end{equation*}
and
% Since the uniform error analysis for the discretization of the system \eqref{AC}-\eqref{AD} relies on an $\varepsilon$-dependent function space, we define the normed product space
% $H_{0}(\curl,\Omega)\cap H_{0}^{1}(\Omega;\mathbb{R}^{3})\times L_{0}^{2}(\Omega;\mathbb{R}^{3}) $, 
equip space $H_0^1(\Omega; \mathbb{R}^3)$ with the $\varepsilon$-weighted norm
\begin{equation*}
	\|\boldsymbol{\psi}\|_{\varepsilon}:=(\varepsilon^2 |\boldsymbol{\psi}|_1^2+\|\curl\boldsymbol{\psi}\|_0^2+\|\boldsymbol{\psi}\|_0^2)^{1/2}.
\end{equation*}
Clearly, we have the following continuity conditions
% \begin{subequations}
	\begin{align*}
		a(\boldsymbol{\phi},\lambda;\boldsymbol{\psi},\mu)
		&\leq \|\boldsymbol{\phi}\|_{\varepsilon}\|\boldsymbol{\psi}\|_{\varepsilon},\\
		b(\boldsymbol{\psi}, \mu;\boldsymbol{q})&\leq (\|\curl\boldsymbol{\psi}\|_0+\|\mu\|_0)\|\boldsymbol{q}\|_{H(\div)},
	\end{align*}
	for any $\boldsymbol{\phi}, \boldsymbol{\psi}\in H_0^{1}(\Omega;\mathbb{R}^{3})$,
	$\lambda, \mu\in L_0^2(\Omega)$ and $\boldsymbol{q}\in H_0(\div,\Omega)$.
	% \end{subequations}

\begin{lemma}
	For $(\boldsymbol{\psi}, \mu)\in  H_{0}^{1}(\Omega;\mathbb{R}^{3})\times L_0^2(\Omega)$ satisfying $b(\boldsymbol{\psi}, \mu;\boldsymbol{q})=0$ for all~$\boldsymbol{q}\in H_0(\div,\Omega)$, we have
	\begin{align}\label{AEO}
		\|\boldsymbol{\psi}\|_{\varepsilon}^{2}+\|\mu\|_{0}^{2} = a(\boldsymbol{\psi},\mu;\boldsymbol{\psi},\mu).
	\end{align}
\end{lemma}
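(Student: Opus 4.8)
The plan is to reduce the claimed equality to the vanishing of the two ``extra'' terms $\|\curl\boldsymbol{\psi}\|_0^2$ and $\|\mu\|_0^2$. Indeed, by definition $a(\boldsymbol{\psi},\mu;\boldsymbol{\psi},\mu)=\varepsilon^2|\boldsymbol{\psi}|_1^2+\|\boldsymbol{\psi}\|_0^2$, whereas $\|\boldsymbol{\psi}\|_\varepsilon^2+\|\mu\|_0^2=\varepsilon^2|\boldsymbol{\psi}|_1^2+\|\curl\boldsymbol{\psi}\|_0^2+\|\boldsymbol{\psi}\|_0^2+\|\mu\|_0^2$. Hence \eqref{AEO} is equivalent to $\curl\boldsymbol{\psi}=0$ and $\mu=0$, and everything follows once I exhibit suitable test functions $\boldsymbol{q}$ in the constraint $b(\boldsymbol{\psi},\mu;\boldsymbol{q})=0$.

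First I would test with $\boldsymbol{q}=\curl\boldsymbol{\psi}$. This is legitimate because $\boldsymbol{\psi}\in H_0^1(\Omega;\mathbb{R}^3)\subset H_0(\curl,\Omega)$, so the de Rham complex \eqref{deRhamthreedim} gives $\curl\boldsymbol{\psi}\in H_0(\div,\Omega)$ with $\div\curl\boldsymbol{\psi}=0$. Substituting, $0=b(\boldsymbol{\psi},\mu;\curl\boldsymbol{\psi})=(\curl\boldsymbol{\psi},\curl\boldsymbol{\psi})-(\mu,\div\curl\boldsymbol{\psi})=\|\curl\boldsymbol{\psi}\|_0^2$, whence $\curl\boldsymbol{\psi}=0$.

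With $\curl\boldsymbol{\psi}=0$ in hand, the constraint collapses to $(\mu,\div\boldsymbol{q})=0$ for all $\boldsymbol{q}\in H_0(\div,\Omega)$. Since $\mu\in L_0^2(\Omega)$ and $\div\colon H_0(\div,\Omega)\to L_0^2(\Omega)$ is surjective (exactness of \eqref{deRhamthreedim}), I may pick $\boldsymbol{q}$ with $\div\boldsymbol{q}=\mu$, obtaining $\|\mu\|_0^2=0$, i.e.\ $\mu=0$. Combining the two facts yields $\|\boldsymbol{\psi}\|_\varepsilon^2+\|\mu\|_0^2=\varepsilon^2|\boldsymbol{\psi}|_1^2+\|\boldsymbol{\psi}\|_0^2=a(\boldsymbol{\psi},\mu;\boldsymbol{\psi},\mu)$, as claimed.

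The only point requiring any care is the admissibility of $\boldsymbol{q}=\curl\boldsymbol{\psi}$ as a test function, i.e.\ that $\curl\boldsymbol{\psi}$ has vanishing normal trace; this is immediate from the homogeneous de Rham complex, and can alternatively be checked by integration by parts against $\nabla H^1(\Omega)$. Everything else is a direct computation, so I do not expect a genuine obstacle here.
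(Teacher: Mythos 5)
Your proof is correct and follows exactly the same route as the paper's: test with $\boldsymbol{q}=\curl\boldsymbol{\psi}$ to kill the curl term, then use surjectivity of $\div\colon H_0(\div,\Omega)\to L_0^2(\Omega)$ to get $\mu=0$. Your extra remark on the admissibility of $\curl\boldsymbol{\psi}$ as a test function is a point the paper leaves implicit, and it is correctly justified.
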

\begin{proof}
	By taking $\boldsymbol{q}=\curl\,\boldsymbol{\psi}$ in $b(\boldsymbol{\psi}, \mu;\boldsymbol{q})=0$,  we get $\curl\,\boldsymbol{\psi}=0$. Then we have ~$(\mu,\div\boldsymbol{q})=0$ for~$\boldsymbol{q}\in H_0(\div,\Omega)$, so $\mu=0$.
	Therefore,  we get \eqref{AEO}.
\end{proof}

\begin{lemma}
	For $\boldsymbol{q}\in H_{0}(\mathrm{div},\Omega)$, it holds
	\begin{align}\label{AH}
		\|\boldsymbol{q}\|_{H(\div)}\lesssim \sup _{\boldsymbol{\psi}\in H_{0}^{1}(\Omega;\mathbb{R}^{3}),\,\mu\in L_0^2(\Omega)}
		\frac{b(\boldsymbol{\psi}, \mu;\boldsymbol{q})}{\|\boldsymbol{\psi}\|_{\varepsilon}+\|\mu\|_{0}}\,:\,=\beta.
	\end{align}
\end{lemma}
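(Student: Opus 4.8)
The plan is to establish the inf-sup condition \eqref{AH} by constructing, for each $\boldsymbol{q}\in H_0(\div,\Omega)$, a suitable test pair $(\boldsymbol{\psi},\mu)$ that recovers both $\|\curl$-type$\|$ and $\|\div\|$ parts of the $H(\div)$-norm. I would split $\boldsymbol{q}$ according to the Hodge-type decomposition suggested by the smooth de Rham complex \eqref{deRham2}. Write $\div\boldsymbol{q}\in L_0^2(\Omega)$; since $\div\colon H_0^1(\Omega;\mathbb{R}^3)\to L_0^2(\Omega)$ is surjective with a bounded right inverse (a standard Bogovski{\u\i}-type result), there is $\boldsymbol{z}\in H_0^1(\Omega;\mathbb{R}^3)$ with $\div\boldsymbol{z}=-\div\boldsymbol{q}$ and $|\boldsymbol{z}|_1\lesssim\|\div\boldsymbol{q}\|_0$. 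Then $\boldsymbol{q}+\boldsymbol{z}$ is divergence-free, hence by exactness of the de Rham complex \eqref{deRhamthreedim} there is $\boldsymbol{\rho}\in H_0(\curl,\Omega)$ with $\curl\boldsymbol{\rho}=\boldsymbol{q}+\boldsymbol{z}$; one can further take $\boldsymbol{\rho}$ in the smoother space $H_0^1(\Omega;\mathbb{R}^3)$ (this is exactly the point where the smooth complex \eqref{deRham2} enters — the regularized Poincar\'e operator for \eqref{deRham2} gives $\boldsymbol{\rho}\in H_0^1(\Omega;\mathbb{R}^3)$ with $|\boldsymbol{\rho}|_1+\|\boldsymbol{\rho}\|_0\lesssim\|\boldsymbol{q}+\boldsymbol{z}\|_0\lesssim\|\boldsymbol{q}\|_{H(\div)}$).

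**Key steps in order.** First, fix the decomposition above, producing $\boldsymbol{\rho}\in H_0^1(\Omega;\mathbb{R}^3)$ with $\curl\boldsymbol{\rho}=\boldsymbol{q}+\boldsymbol{z}$ and $\boldsymbol{z}\in H_0^1(\Omega;\mathbb{R}^3)$ with $\div\boldsymbol{z}=-\div\boldsymbol{q}$. Second, choose the test pair: take $\boldsymbol{\psi}=\boldsymbol{\rho}$ and $\mu=-\|\div\boldsymbol{q}\|_0^{-1}\cdot(\text{scaled copy of }\div\boldsymbol{q})$, i.e. set $\mu\in L_0^2(\Omega)$ proportional to $\div\boldsymbol{q}$ with $\|\mu\|_0=\|\div\boldsymbol{q}\|_0$; then
\begin{align*}
b(\boldsymbol{\rho},\mu;\boldsymbol{q})
&=(\curl\boldsymbol{\rho},\boldsymbol{q})-(\mu,\div\boldsymbol{q})
=(\boldsymbol{q}+\boldsymbol{z},\boldsymbol{q})+\|\div\boldsymbol{q}\|_0^2\\
&=\|\boldsymbol{q}\|_0^2+(\boldsymbol{z},\boldsymbol{q})+\|\div\boldsymbol{q}\|_0^2.
\end{align*}
Third, control the cross term $(\boldsymbol{z},\boldsymbol{q})$: integrate by parts, $(\boldsymbol{z},\boldsymbol{q})=-(\div\boldsymbol{z},\cdot)$ is not directly available since $\boldsymbol{q}$ need not be smooth, so instead bound $|(\boldsymbol{z},\boldsymbol{q})|\le\|\boldsymbol{z}\|_0\|\boldsymbol{q}\|_0\lesssim\|\div\boldsymbol{q}\|_0\|\boldsymbol{q}\|_0$ (using the Poincar\'e inequality $\|\boldsymbol{z}\|_0\lesssim|\boldsymbol{z}|_1\lesssim\|\div\boldsymbol{q}\|_0$), and absorb via Young's inequality. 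This yields $b(\boldsymbol{\rho},\mu;\boldsymbol{q})\gtrsim\|\boldsymbol{q}\|_0^2+\|\div\boldsymbol{q}\|_0^2=\|\boldsymbol{q}\|_{H(\div)}^2$. Fourth, estimate the denominator: $\|\boldsymbol{\rho}\|_\varepsilon^2=\varepsilon^2|\boldsymbol{\rho}|_1^2+\|\curl\boldsymbol{\rho}\|_0^2+\|\boldsymbol{\rho}\|_0^2\lesssim|\boldsymbol{\rho}|_1^2+\|\boldsymbol{q}+\boldsymbol{z}\|_0^2+\|\boldsymbol{\rho}\|_0^2\lesssim\|\boldsymbol{q}\|_{H(\div)}^2$ uniformly in $\varepsilon\le$ const (here one uses $\varepsilon$ bounded, or normalizes; the constant in \eqref{AH} is $\varepsilon$-independent precisely because $\varepsilon^2|\boldsymbol{\rho}|_1^2\le|\boldsymbol{\rho}|_1^2$ when $\varepsilon\lesssim1$), and $\|\mu\|_0=\|\div\boldsymbol{q}\|_0\le\|\boldsymbol{q}\|_{H(\div)}$. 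Combining, $b(\boldsymbol{\rho},\mu;\boldsymbol{q})/(\|\boldsymbol{\rho}\|_\varepsilon+\|\mu\|_0)\gtrsim\|\boldsymbol{q}\|_{H(\div)}$, which is \eqref{AH}.

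**Main obstacle.** The crux is obtaining the regularized preimage $\boldsymbol{\rho}$ in the \emph{smooth} space $H_0^1(\Omega;\mathbb{R}^3)$ rather than merely in $H_0(\curl,\Omega)$, with a bound by $\|\boldsymbol{q}+\boldsymbol{z}\|_0$ and uniformly in $\varepsilon$; this is exactly the content tied to the smooth de Rham complex \eqref{deRham2} and its regularized Poincar\'e/Bogovski{\u\i} operators, and it is what makes the $\varepsilon^2|\boldsymbol{\psi}|_1^2$ term in $\|\cdot\|_\varepsilon$ harmless. If one only had $\boldsymbol{\rho}\in H_0(\curl,\Omega)$ the denominator $\|\boldsymbol{\rho}\|_\varepsilon$ would be infinite. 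A secondary (routine) point is the compatibility of boundary conditions: one must check $\boldsymbol{z}$ and $\boldsymbol{\rho}$ can be taken with homogeneous traces so that the test functions genuinely lie in $H_0^1(\Omega;\mathbb{R}^3)$; this follows from using the \emph{homogeneous} versions of the surjectivity/exactness results, all of which are available from the complexes recalled in the introduction. Once $\boldsymbol{\rho}$ is in hand, the remaining estimates are elementary Cauchy--Schwarz, Poincar\'e, and Young's-inequality manipulations.
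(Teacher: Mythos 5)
Your argument follows essentially the same route as the paper's: both rest on a regular decomposition of $\boldsymbol{q}\in H_0(\div,\Omega)$ as the $\curl$ of an $H_0^1$ vector field plus an $H_0^1$ field whose norm is controlled by $\|\div\boldsymbol{q}\|_0$. The paper cites this decomposition directly (the Hiptmair--Xu regular decomposition $H_0(\div,\Omega)=\curl(H_0^1(\Omega;\mathbb{R}^3))+H_0^1(\Omega;\mathbb{R}^3)$) and then closes the inf-sup bound in two separate steps --- first $\|\div\boldsymbol{q}\|_0\lesssim\beta$ by testing with $\mu$ alone, then $\|\boldsymbol{q}\|_0\lesssim\beta+\|\div\boldsymbol{q}\|_0$ by testing with the curl potential alone ($\mu=0$) --- whereas you build the decomposition by hand (a Bogovskii correction plus an $H_0^1$ curl potential, both available from $\div H_0^1(\Omega;\mathbb{R}^3)=L_0^2(\Omega)$ and $\curl H_0^1(\Omega;\mathbb{R}^3)=H_0(\div,\Omega)\cap\ker(\div)$, i.e.\ Costabel--McIntosh, which the paper itself invokes elsewhere) and then use a single combined test pair. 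Your identification of the crux --- that the curl potential must live in $H_0^1$, not merely $H_0(\curl)$, so that $\varepsilon^2|\boldsymbol{\psi}|_1^2$ in $\|\cdot\|_\varepsilon$ stays controlled --- is exactly the point the paper's choice of decomposition is designed to handle.

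One step as written does not quite close. With $\|\mu\|_0=\|\div\boldsymbol{q}\|_0$, the Young absorption of the cross term gives only $b(\boldsymbol{\rho},\mu;\boldsymbol{q})\ge\tfrac12\|\boldsymbol{q}\|_0^2+\bigl(1-\tfrac{C^2}{2}\bigr)\|\div\boldsymbol{q}\|_0^2$, where $C$ is the (possibly large) constant in $\|\boldsymbol{z}\|_0\le C\|\div\boldsymbol{q}\|_0$; the second coefficient can be negative, so the claimed lower bound $b(\boldsymbol{\rho},\mu;\boldsymbol{q})\gtrsim\|\boldsymbol{q}\|_{H(\div)}^2$ does not follow as stated. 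The fix is one line: take $\mu=-K\div\boldsymbol{q}$ with $K\eqsim 1+C^2$, which keeps $\|\mu\|_0\lesssim\|\boldsymbol{q}\|_{H(\div)}$ and makes both coefficients positive; equivalently, do as the paper does and establish $\|\div\boldsymbol{q}\|_0\lesssim\beta$ as a separate preliminary bound, after which the cross term may simply be moved to the right-hand side without any absorption. With that normalization repaired, your proof is complete.
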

\begin{proof}
	Firstly, utilizing $\div H_{0}^{1}(\Omega;\mathbb{R}^{3})=L_0^2(\Omega)$~\cite[Theorem 1.1]{CostabelMcIntosh2010}, it holds
	\begin{align}\label{divp0}
		\|\div\boldsymbol{q}\|_{0}\lesssim\sup _{\mu\in L_0^2(\Omega)}\frac{(\mu,\div\boldsymbol{q})}{\|\mu\|_{0}}\,\lesssim\beta.
	\end{align}
	On the other hand, for $\boldsymbol{q}\in H_0(\div,\Omega)$, 
	according to the regular decomposition~\cite[Lemma 3.8]{HiptmairJinchao2007}
	\begin{align*}
		H_{0}(\div,\Omega)=\curl(H_{0}^{1}(\Omega;\mathbb{R}^{3}))+ H_{0}^{1}(\Omega;\mathbb{R}^{3}),
	\end{align*}
	there exist $\bar{\boldsymbol{\phi}}\in H_{0}^{1}(\Omega;\mathbb{R}^{3})$ and $\bar{ \boldsymbol{p}}\in H_{0}^{1}(\Omega;\mathbb{R}^{3}) $ such that
	\begin{equation*}%\label{eq:divregulardecomp}
		\boldsymbol{q}=\curl\bar{\boldsymbol{\phi}}+\bar{ \boldsymbol{p}},\qquad \|\bar{\boldsymbol{\phi}}\|_{1}\lesssim\|\curl\bar{\boldsymbol{\phi}}\|_{0}\quad\text{and}\quad\|\bar{\boldsymbol{p}}\|_{1}\lesssim\|\div\boldsymbol{q}\|_{0}.
	\end{equation*}
	Thus,
	\begin{equation*}\|\bar{\boldsymbol{\phi}}\|_{\varepsilon}\leq\varepsilon|\bar{\boldsymbol{\phi}}|_{1}+\|\bar{\boldsymbol{\phi}}\|_{0}+\|\curl\bar{\boldsymbol{\phi}}\|_{0}\lesssim\|\bar{\boldsymbol{\phi}}\|_{1}\lesssim\|\curl\bar{\boldsymbol{\phi}}\|_{0}.
	\end{equation*}
	Then 
	\begin{align*}
		\|\boldsymbol{q}\|_{0}&\leq \|\bar{\boldsymbol{p}}\|_{0}+\|\curl\bar{\boldsymbol{\phi}}\|_{0}
		\lesssim \|\bar{\boldsymbol{p}}\|_{0}+\frac{(\curl\bar{\boldsymbol{\phi}},\boldsymbol{q}-\bar{\boldsymbol{p}})}{\|\bar{\boldsymbol{\phi}}\|_{\varepsilon}} \\
		&\lesssim\|\bar{\boldsymbol{p}}\|_{0}+\sup_{\boldsymbol{\psi}\in H_{0}^{1}(\Omega;\mathbb{R}^{3})}\frac{(\curl\boldsymbol{\psi},\boldsymbol{q}-\bar{\boldsymbol{p}})}{\|\boldsymbol{\psi}\|_{\varepsilon}} \\
		&\lesssim\|\div\boldsymbol{q}\|_{0}+\sup_{\boldsymbol{\psi}\in H_{0}^{1}(\Omega;\mathbb{R}^{3})}\frac{(\curl\boldsymbol{\psi},\boldsymbol{q})}{\|\boldsymbol{\psi}\|_{\varepsilon}}
		\,\leq\|\div\boldsymbol{q}\|_{0}+\beta.
	\end{align*}
	We conclude \eqref{AH} from the above inequality and \eqref{divp0}.
\end{proof}

\begin{theorem}\label{equivalent}
	The decoupled formulation \eqref{decoupleA} is well-posed, and equivalent to
	the primal formulation \eqref{AA0}. That is,
	if $w\in H_{0}^{1}(\Omega)$ is the solution of problem~\eqref{AB}, $(\boldsymbol{\phi}, \boldsymbol{p}, \lambda)\in H_{0}^{1}(\Omega;\mathbb{R}^{3})\times H_{0}(\div,\Omega)\times  L_0^2(\Omega)$ is the solution of problem~\eqref{AC}-\eqref{AD}, and $u\in H_{0}^{1}(\Omega)$ is the solution of problem~\eqref{AE},  then $\lambda=0$, $\div\boldsymbol{p}=0$, $\boldsymbol{\phi}=\nabla u$, and $u\in H_{0}^{2}(\Omega)$ is the solution of the primal formulation \eqref{AA0}.
\end{theorem}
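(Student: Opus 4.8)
The plan is to establish the two assertions separately: first the well-posedness of the decoupled system \eqref{decoupleA}, then the identification of its solution with the primal solution. For well-posedness, note that \eqref{AB} and \eqref{AE} are ordinary Dirichlet problems for the Laplacian on $H_0^1(\Omega)$ and are uniquely and stably solvable by Lax--Milgram, so it suffices to treat the saddle-point system \eqref{AC}--\eqref{AD}. I would apply the standard Brezzi theory on $H_0^1(\Omega;\mathbb{R}^3)\times L_0^2(\Omega)$, normed by $\|\boldsymbol{\psi}\|_\varepsilon+\|\mu\|_0$, paired with $H_0(\div,\Omega)$: the continuity of $a$ and $b$ is already recorded, the ellipticity of $a$ on $\ker b$ is exactly the identity \eqref{AEO}, and the inf--sup condition for $b$ is exactly \eqref{AH}. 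Since $\|\boldsymbol{\psi}\|_0\le\|\boldsymbol{\psi}\|_\varepsilon$, the right-hand side $\boldsymbol{\psi}\mapsto(\nabla w,\boldsymbol{\psi})$ is bounded in the relevant norm, so Brezzi's theorem produces a unique, stable $(\boldsymbol{\phi},\lambda;\boldsymbol{p})$; composing the solution maps $f\mapsto w\mapsto(\boldsymbol{\phi},\lambda;\boldsymbol{p})\mapsto u$ gives well-posedness of \eqref{decoupleA}, and the primal problem \eqref{AA0} is well-posed by Lax--Milgram on $H_0^2(\Omega)$.

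For the equivalence, I would read off the pointwise relations in a fixed order. Since $\boldsymbol{\phi}\in H_0^1(\Omega;\mathbb{R}^3)$, the vector field $\curl\boldsymbol{\phi}$ lies in $H_0(\div,\Omega)$ with $\div\curl\boldsymbol{\phi}=0$ (its normal trace vanishes because $\boldsymbol{\phi}$ vanishes on $\partial\Omega$), so taking $\boldsymbol{q}=\curl\boldsymbol{\phi}$ in \eqref{AD} forces $\|\curl\boldsymbol{\phi}\|_0^2=0$, i.e. $\curl\boldsymbol{\phi}=0$; then \eqref{AD} reduces to $(\lambda,\div\boldsymbol{q})=0$ for all $\boldsymbol{q}\in H_0(\div,\Omega)$, and since $\div$ maps $H_0(\div,\Omega)$ onto $L_0^2(\Omega)$ and $\lambda\in L_0^2(\Omega)$ we conclude $\lambda=0$. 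Taking $\boldsymbol{\psi}=0$ in \eqref{AC} gives $(\mu,\div\boldsymbol{p})=0$ for all $\mu\in L_0^2(\Omega)$, and as $\boldsymbol{p}\in H_0(\div,\Omega)$ forces $\div\boldsymbol{p}\in L_0^2(\Omega)$, this gives $\div\boldsymbol{p}=0$. Because $\boldsymbol{\phi}\in H_0^1(\Omega;\mathbb{R}^3)\subset H_0(\curl,\Omega)$ is curl-free, the exactness of the de Rham complex provides $\tilde u\in H_0^1(\Omega)$ with $\nabla\tilde u=\boldsymbol{\phi}$; since $\boldsymbol{\phi}\in H^1(\Omega;\mathbb{R}^3)$ and vanishes on $\partial\Omega$, we get $\tilde u\in H^2(\Omega)$ and $\partial_n\tilde u=0$, hence $\tilde u\in H_0^2(\Omega)$. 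Finally \eqref{AE} reads $(\nabla(u-\tilde u),\nabla\chi)=0$ for all $\chi\in H_0^1(\Omega)$, so $u=\tilde u\in H_0^2(\Omega)$ and $\boldsymbol{\phi}=\nabla u$.

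To close, I would substitute $\boldsymbol{\phi}=\nabla u$ and $\mu=0$ in \eqref{AC} and test with $\boldsymbol{\psi}=\nabla v$ for an arbitrary $v\in H_0^2(\Omega)$; this is admissible because $\nabla v\in H_0^1(\Omega;\mathbb{R}^3)$ and $\curl\nabla v=0$, so the term involving $\boldsymbol{p}$ vanishes and one obtains $\varepsilon^2(\nabla^2 u,\nabla^2 v)+(\nabla u,\nabla v)=(\nabla w,\nabla v)$, which together with \eqref{AB} (used with $v\in H_0^2(\Omega)\subset H_0^1(\Omega)$) is precisely the primal formulation \eqref{AA0}; conversely, given the primal solution one recovers a solution of \eqref{decoupleA} by the same exactness and Brezzi arguments, so by the uniqueness just established the two formulations are equivalent. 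I expect the main obstacle to be the potential/regularity step — extracting from the curl-free field $\boldsymbol{\phi}\in H_0^1(\Omega;\mathbb{R}^3)$ a scalar potential that actually lies in $H_0^2(\Omega)$, via exactness of \eqref{deRham2} and the boundary identity $\nabla\tilde u=\boldsymbol{\phi}=0$ on $\partial\Omega$ — together with the routine but essential check that $\curl\boldsymbol{\phi}$ and $\div\boldsymbol{p}$ land in the trace spaces ($H_0(\div,\Omega)$ and $L_0^2(\Omega)$) that make the test functions above legitimate; everything else is a direct application of the lemmas proved above.
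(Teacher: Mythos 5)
Your proposal is correct and follows essentially the same route as the paper: well-posedness of \eqref{AC}--\eqref{AD} via Babu\v{s}ka--Brezzi using the coercivity \eqref{AEO} and the inf-sup condition \eqref{AH}, then the equivalence by testing \eqref{AD} with $\boldsymbol{q}=\curl\boldsymbol{\phi}$, identifying $\boldsymbol{\phi}=\nabla u$ through exactness, and testing \eqref{AC} with $\boldsymbol{\psi}=\nabla v$, $\mu=0$ and with $\boldsymbol{\psi}=0$. The extra care you take in verifying that $\curl\boldsymbol{\phi}$ is an admissible test function and that the potential $\tilde u$ lands in $H_0^2(\Omega)$ is a welcome elaboration of steps the paper leaves implicit.
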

\begin{proof}
	Both \eqref{AB} and \eqref{AE} are the weak formulation of Poisson equation, which are evidently well-posed~\cite[Theorem 1.1.3]{Ciarlet1978}. Thanks to the coercivity \eqref{AEO} and the inf-sup condition~\eqref{AH}, apply the Babu\v{s}a-Brezzi theory~\cite[Theorem 4.2.3]{BoffiBrezziFortin2013} to acquire the well-posedness of the generalized singularly perturbed Stokes-type equation \eqref{AC}-\eqref{AD}. 
	Thus, the decoupled formulation \eqref{decoupleA} is well-posed.
	
	Next we show the equivalence between the decoupled formulation \eqref{decoupleA} and the primal formulation \eqref{AA0}. Take $\boldsymbol{q}= \curl\,\boldsymbol{\phi}$ in (\ref{AD}) to result in $\curl\boldsymbol{\phi}=0$ and $\lambda=0$. 
	Then $\boldsymbol{\phi}=\nabla \tilde{u}\in H_0^{1}(\Omega;\mathbb{R}^{3})$ with some $\tilde{u} \in H_0^{2}(\Omega)$. From~(\ref{AE}), we have $ u=\tilde{u}$ and $\boldsymbol{\phi}=\nabla u$. By setting $\boldsymbol{\psi}=\nabla v$ with $v \in H_0^{2}(\Omega)$ in~(\ref{AC}), we obtain
	\begin{equation*} 
		\varepsilon^2 (\nabla^{2}u,\nabla^{2}v)+(\nabla u,\nabla v)=(\nabla w,\nabla v)\quad\forall\,v \in H_0^{2}(\Omega).
	\end{equation*}
	This, along with (\ref{AB}), leads to (\ref{AA0}).
	
	Finally, choosing $\mu=\div\boldsymbol{p}$ in \eqref{AC0}  derives $\div\boldsymbol{p}=0$.
\end{proof}

Thus, the primal formulation~\eqref{AA0} of the fourth-order elliptic singular perturbation problem~\eqref{eqtwo} can be decoupled into two Poisson equations~\eqref{AB} and~\eqref{AE}, and a generalized singularly perturbed Stokes equation~\eqref{AC}-\eqref{AD} involving the $\curl$ operator. This decoupling not only simplifies the mathematical structure of the fourth-order elliptic singular perturbation problem, but also facilitates the development of efficient finite element methods and the design of fast solvers.
\begin{remark}\rm
	By applying the integration by parts to $(\mu,\div\boldsymbol{p})$ and $(\lambda,\div\boldsymbol{q})$, the formulation \eqref{decoupleA} is equivalent to find $u, w\in H_{0}^{1}(\Omega)$,
	$\boldsymbol{\phi}\in H_{0}^{1}(\Omega;\mathbb{R}^{3})$,
	$\boldsymbol{p}\in L^2(\Omega;\mathbb{R}^{3})$ and $\lambda\in H^1 (\Omega)\cap L_0^2(\Omega)$ such that
	\begin{align*}
		(\nabla w,\nabla v)&=(f,v), \\
		\varepsilon^2(\nabla\boldsymbol{\phi},\nabla\boldsymbol{\psi})+(\boldsymbol{\phi},\boldsymbol{\psi})   +(\curl  \boldsymbol{\psi},\boldsymbol{p})&=(\nabla w,\boldsymbol{\psi}), \\
		(\nabla\mu,\boldsymbol{p})&=0, \\
		(\curl  \boldsymbol{\phi},\boldsymbol{q}) +(\nabla\lambda, \boldsymbol{q}) &=0,\\
		(\nabla u,\nabla \chi)&=(\boldsymbol{\phi},\nabla\chi),
	\end{align*}
	for any $v,\chi\in H_{0}^{1}(\Omega)$,
	$\boldsymbol{\psi}\in H_{0}^{1}(\Omega;\mathbb{R}^{3})$,
	$\boldsymbol{q}\in L^2(\Omega;\mathbb{R}^{3})$ and $\mu\in  H^1 (\Omega)\cap L_0^2(\Omega)$.
\end{remark}

\subsection{Regularity}
When $\varepsilon=0$, the fourth-order elliptic singular perturbation problem~\eqref{eqtwo} reduces to the Poisson equation
\begin{equation}
	\left\{
	\begin{aligned}
		-\Delta u_0&=f\qquad \,\,\mathrm{in}\,\,\Omega,\\
		u_0&=0 \qquad \,\, \mathrm{on}\,\, \partial\Omega.
	\end{aligned}\right.   \label{eqtwo1}
\end{equation}%
Indeed, $u_0=w$, where $w\in H_0^1(\Omega)$ is the solution of problem \eqref{AB}.
%In this paper, 
\begin{hypothesis}\label{regularityassumption}
	Assume that the Poisson equation \eqref{eqtwo1} satisfies the regularity
	\begin{equation}  \label{H2regularity}
		\|u_0\|_2 = \|w\|_2 \lesssim \|f\|_0,
	\end{equation}
	and that the fourth-order elliptic singular perturbation problem~\eqref{eqtwo} satisfies the regularity
	\begin{equation}  \label{sregularity}
		\|u - u_0\|_1 + \varepsilon \|u\|_2 + \varepsilon^2 \|u\|_3 \lesssim \varepsilon^{1/2} \|f\|_0.
	\end{equation}
\end{hypothesis}
When $\Omega$ is convex, we refer to \cite{MitreaMitreaYan2010,GaoLai2020,Kadlec1964,Talenti1965} for the regularity \eqref{H2regularity}, and~\cite[Lemma
5.1]{NilssenTai2001} and \cite [Lemma 4]{GuzmaneykekhmanNeilan2012} for the regularity \eqref{sregularity} in two and three dimensions. 

Based on Hypothesis \ref{regularityassumption}, we have the following regularity for $\boldsymbol{\phi}$ and $\boldsymbol{p}$.
\begin{lemma}\label{phiregularity}
	Let $(w,\boldsymbol{\phi}=\nabla u, \boldsymbol{p}, 0, u)\in H_{0}^{1}(\Omega)\times H_{0}^{1}(\Omega;\mathbb{R}^{3})\times H_{0}(\div,\Omega)\times  L_0^2(\Omega)\times H_{0}^{2}(\Omega)$ be the solution of the decoupled formulation \eqref{decoupleA}, and let $\boldsymbol{\phi}_0=\nabla u_0\in H_0(\curl,\Omega)$. Under Hypothesis \ref{regularityassumption}, we have
	\begin{align}  \label{sregularitygrad0}
		\|\boldsymbol{\phi}_0\|_1&\lesssim\|f\|_0, \\
		\label{sregularitygrad}
		\|\boldsymbol{\phi}-\boldsymbol{\phi}_0\|_0+\varepsilon\|\boldsymbol{\phi}\|_1  
		+\varepsilon^2\|\boldsymbol{\phi}\|_2
		&\lesssim\varepsilon^{1/2}\|f\|_0, \\
		\label{sregularitycurlp}
		\|\curl\boldsymbol{p}\|_0&\lesssim\varepsilon^{1/2}\|f\|_0.
	\end{align}
\end{lemma}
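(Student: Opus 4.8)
The plan is to exploit that $\boldsymbol{\phi}=\nabla u$ and $\boldsymbol{\phi}_0=\nabla u_0$ are gradients of the scalar solutions, so that \eqref{sregularitygrad0} and \eqref{sregularitygrad} follow immediately from the scalar regularity \eqref{H2regularity}--\eqref{sregularity}, and to obtain \eqref{sregularitycurlp} by testing the first block equation \eqref{AC} against smooth compactly supported fields and integrating by parts. Indeed, $\|\boldsymbol{\phi}_0\|_1=\|\nabla u_0\|_1\le\|u_0\|_2\lesssim\|f\|_0$ by \eqref{H2regularity}, which is \eqref{sregularitygrad0}; and writing $\boldsymbol{\phi}-\boldsymbol{\phi}_0=\nabla(u-u_0)$ yields $\|\boldsymbol{\phi}-\boldsymbol{\phi}_0\|_0\le\|u-u_0\|_1$, $\varepsilon\|\boldsymbol{\phi}\|_1\le\varepsilon\|u\|_2$ and $\varepsilon^2\|\boldsymbol{\phi}\|_2\le\varepsilon^2\|u\|_3$, so summing and invoking \eqref{sregularity} gives \eqref{sregularitygrad}. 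Note that \eqref{sregularity} already presupposes $u\in H^3(\Omega)$, hence $\boldsymbol{\phi}=\nabla u\in H^2(\Omega;\mathbb{R}^3)$, a fact that will be used for the curl estimate.

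For \eqref{sregularitycurlp} I would fix $\boldsymbol{\psi}\in C_0^\infty(\Omega;\mathbb{R}^3)$ and take $\mu=0$ in \eqref{AC}; recalling $w=u_0$, this reads $(\boldsymbol{p},\curl\boldsymbol{\psi})=(\nabla w,\boldsymbol{\psi})-\varepsilon^2(\nabla\boldsymbol{\phi},\nabla\boldsymbol{\psi})-(\boldsymbol{\phi},\boldsymbol{\psi})$. Since $\boldsymbol{\phi}\in H^2(\Omega;\mathbb{R}^3)$ and $\boldsymbol{\psi}$ vanishes on $\partial\Omega$, an integration by parts turns $\varepsilon^2(\nabla\boldsymbol{\phi},\nabla\boldsymbol{\psi})$ into $-\varepsilon^2(\Delta\boldsymbol{\phi},\boldsymbol{\psi})$, so that
\begin{equation*}
(\boldsymbol{p},\curl\boldsymbol{\psi})=(\nabla w+\varepsilon^2\Delta\boldsymbol{\phi}-\boldsymbol{\phi},\boldsymbol{\psi})\qquad\forall\,\boldsymbol{\psi}\in C_0^\infty(\Omega;\mathbb{R}^3).
\end{equation*}
By the definition of the distributional curl this identifies $\curl\boldsymbol{p}=\nabla w+\varepsilon^2\Delta\boldsymbol{\phi}-\boldsymbol{\phi}\in L^2(\Omega;\mathbb{R}^3)$, hence $\|\curl\boldsymbol{p}\|_0\le\|\nabla w\|_0+\varepsilon^2\|\Delta\boldsymbol{\phi}\|_0+\|\boldsymbol{\phi}\|_0$.

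Finally the three terms are bounded by the already-established regularity: $\|\nabla w\|_0\le\|w\|_2\lesssim\|f\|_0$ by \eqref{H2regularity}; $\varepsilon^2\|\Delta\boldsymbol{\phi}\|_0\le\varepsilon^2\|\boldsymbol{\phi}\|_2\lesssim\varepsilon^{1/2}\|f\|_0\lesssim\|f\|_0$ by \eqref{sregularitygrad} (using $\varepsilon\lesssim 1$); and $\|\boldsymbol{\phi}\|_0=\|\nabla u\|_0\le\|u-u_0\|_1+\|u_0\|_1\lesssim\varepsilon^{1/2}\|f\|_0+\|f\|_0\lesssim\|f\|_0$ by \eqref{sregularity} and \eqref{H2regularity}, which together give \eqref{sregularitycurlp}. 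The only step needing care is the integration by parts identifying $\curl\boldsymbol{p}$: it requires knowing a priori that $\boldsymbol{\phi}\in H^2(\Omega;\mathbb{R}^3)$, which is exactly what the assumed regularity \eqref{sregularity} supplies, and this is also why the curl bound remains uniform in $\varepsilon$ in spite of the term $\varepsilon^2\Delta\boldsymbol{\phi}$.
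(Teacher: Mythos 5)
Your proof is correct and follows essentially the same route as the paper: \eqref{sregularitygrad0}--\eqref{sregularitygrad} are read off from the scalar regularity via $\boldsymbol{\phi}=\nabla u$, $\boldsymbol{\phi}_0=\nabla u_0$, and \eqref{sregularitycurlp} comes from the pointwise identity $\curl\boldsymbol{p}=\nabla w+\varepsilon^2\Delta\boldsymbol{\phi}-\boldsymbol{\phi}$ extracted from \eqref{AC}. You merely spell out the distributional identification of $\curl\boldsymbol{p}$ and the term-by-term bounds that the paper leaves implicit.
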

\begin{proof}
	The regularity results \eqref{sregularitygrad0}-\eqref{sregularitygrad} follow from \eqref{H2regularity}-\eqref{sregularity}, $\boldsymbol{\phi}=\nabla u$ and $\boldsymbol{\phi}_0=\nabla u_0$.
	By \eqref{AC} and $\nabla w=\nabla u_0=\boldsymbol{\phi}_0$,
	\begin{equation}\label{20250603}
		\curl\boldsymbol{p}=\nabla w + \varepsilon^2\Delta\boldsymbol{\phi}-\boldsymbol{\phi}=\varepsilon^2\Delta\boldsymbol{\phi}+\boldsymbol{\phi}_0-\boldsymbol{\phi}.
	\end{equation}
	This together with \eqref{sregularitygrad} implies \eqref{sregularitycurlp}.
\end{proof}

\section{Decoupled nonconforming  finite element method}\label{chap5}
In this section, we propose an optimal decoupled nonconforming finite element method for the fourth-order singular perturbation problem \eqref{eqtwo}, which is robust with respect to the perturbation parameter $\varepsilon$. 

\subsection{Decoupled finite element method}
We propose the following nonconforming finite element method for the decoupled formulation \eqref{decoupleA}: find $w_{h}\in V_{h}^{\grad}$,
$\boldsymbol{\phi}_{h}\in\Phi_{h}$,
$\boldsymbol{p}_h\in V_{h}^{\div}$, $\lambda_h\in\mathcal{Q}_{h}$, and
$u_h\in V_{h}^{\grad}$ such that
\begin{subequations}\label{ImprovedHcurl}
	\begin{align}
		(\nabla w_h,\nabla v)&=(f,v),\label{CA}\\
		\varepsilon^2(\nabla_h\boldsymbol{\phi}_h,\nabla_h\boldsymbol{\psi})+
		(I_h^{\rm ND}\boldsymbol{\phi}_h,I_h^{\rm ND}\boldsymbol{\psi}) + (\curl(I_h^{\rm ND}\boldsymbol{\psi}), \boldsymbol{p}_h)&=(\nabla w_{h},I_h^{\rm ND}\boldsymbol{\psi}),\label{CC} \\
		(\mu,\div\boldsymbol{p}_h)&=0,\label{CC0} \\
		(\curl(I_h^{\rm ND}\boldsymbol{\phi}_h), \boldsymbol{q})-(\lambda_h,\div\boldsymbol{q}) &=0,\label{CD}\\
		(\nabla u_h,\nabla \chi)&=(I_h^{\rm ND}\boldsymbol{\phi}_h,\nabla\chi),\label{CE}
	\end{align}
	for any $v,\,\chi\in V_{h}^{\grad}$, $\boldsymbol{\psi}\in \Phi_{h},\,\mu\in\mathcal{Q}_{h}$, and $\boldsymbol{q}\in  V_{h}^{\div}$.
\end{subequations}
The discrete method \eqref{ImprovedHcurl} is decoupled and can be solved sequentially: first compute $w_h$ from \eqref{CA}; next solve for $\boldsymbol{\phi}_h$, $\boldsymbol{p}_h$, and $\lambda_h$ using \eqref{CC}-\eqref{CD}; and finally obtain $u_h$ from \eqref{CE}.

The interpolation operator $I_h^{\rm ND}$ defined by \eqref{VNDinterp} is incorporated into the decoupled method \eqref{ImprovedHcurl} to ensure optimal convergence rates for arbitrary values of $\varepsilon$, which is inspired by~\cite{WangXuHu2006,WangMeng2007}. 
The interpolation operator introduced in~\cite[Page 116]{WangXuHu2006} and \cite[(2.5) and (3.4)]{WangMeng2007} was designed to address the divergence of the Morley-Wang-Xu method for the Poisson equation, rather than to improve its convergence rate; as a result, the uniform convergence achieved there remains suboptimal at $\mathcal{O}(h^{1/2})$. In contrast, we are the first to employ the interpolation operator $I_h^{\rm ND}$ explicitly to attain optimal convergence for the decoupled method \eqref{ImprovedHcurl}.

For the purpose of the analysis, we introduce the following discrete bilinear forms
\begin{align*}
	a_h(\boldsymbol{\phi}_h,\lambda_h;\boldsymbol{\psi},\mu)&:=\varepsilon^2(\nabla_h\boldsymbol{\phi}_h,\nabla_h\boldsymbol{\psi})+
	(I_h^{\rm ND}\boldsymbol{\phi}_h,I_h^{\rm ND}\boldsymbol{\psi}), \\
	b_h(\boldsymbol{\phi}_h, \lambda_h; \boldsymbol{q})&:=b(I_h^{\rm ND}\boldsymbol{\phi}_h, \lambda_h; \boldsymbol{q})=(\curl(I_h^{\rm ND}\boldsymbol{\phi}_h), \boldsymbol{q})-(\lambda_h,\div\boldsymbol{q}).
\end{align*}
By the commutative diagram \eqref{dcommutesthree}, we have 
\begin{equation}\label{eq:curlcommuteND}
	\curl(I_h^{\rm ND}\boldsymbol{\phi}_h)=\curl\boldsymbol{\phi}_h \quad \textrm{ for } \boldsymbol{\phi}_h\in \Phi_{h},
\end{equation}
which implies
\begin{equation*}
	b_h(\boldsymbol{\phi}_h, \lambda_h; \boldsymbol{q})=(\curl\boldsymbol{\phi}_h, \boldsymbol{q})-(\lambda_h,\div\boldsymbol{q})\quad\forall~\boldsymbol{\phi}_h\in \Phi_{h},\, \boldsymbol{q}\in V_{h}^{\div}.
\end{equation*}

\begin{remark}\rm
	The identity \eqref{eq:curlcommuteND} implies that $\curl\boldsymbol{\phi}_h$ is independent of the face-based DoF \eqref{Phi02}. We use $(\curl(I_h^{\rm ND}\boldsymbol{\phi}_h), \boldsymbol{q})$ rather than $(\curl\boldsymbol{\phi}_h, \boldsymbol{q})$ in the discrete method \eqref{ImprovedHcurl} to reflect this property. Moreover, $I_h^{\rm ND}\boldsymbol{\phi}_h$ belongs to $H_0(\curl,\Omega)$, unlike $\boldsymbol{\phi}_h$, and this distinction is essential for achieving the optimal and uniform convergence of the discrete method \eqref{ImprovedHcurl}.
\end{remark}

We define the following discrete norm for function $\boldsymbol{\psi}\in H_0^{2}(\Omega;\mathbb R^3)+\Phi_{h}$:  
\begin{equation*}
	\|\boldsymbol{\psi}\|_{\varepsilon,h}:=(\varepsilon^2 |\boldsymbol{\psi}|_{1,h}^2+\|\curl(I_h^{\rm ND}\boldsymbol{\psi})\|_0^2+\|I_h^{\rm ND}\boldsymbol{\psi}\|_0^2)^{1/2}.
\end{equation*}
By the identity \eqref{eq:curlcommuteND},
\begin{equation}\label{eq:normequivepscurl}
	\|\boldsymbol{\psi}\|_{\varepsilon,h}=(\varepsilon^2 |\boldsymbol{\psi}|_{1,h}^2+\|\curl\boldsymbol{\psi}\|_0^2+\|I_h^{\rm ND}\boldsymbol{\psi}\|_0^2)^{1/2}\qquad\forall~\boldsymbol{\psi}\in \Phi_{h}.
\end{equation}
Clearly, we have the following discrete continuity
% \begin{subequations}
	\begin{align*}
		a_h(\boldsymbol{\phi},\lambda;\boldsymbol{\psi},\mu)
		&\leq \|\boldsymbol{\phi}\|_{\varepsilon,h}\|\boldsymbol{\psi}\|_{\varepsilon,h},\\
		b_h(\boldsymbol{\psi}, \mu;\boldsymbol{q})&\leq (\|\curl(I_h^{\rm ND}\boldsymbol{\psi})\|_0+\|\mu\|_0)\|\boldsymbol{q}\|_{H(\div)},
	\end{align*}
	for any $\boldsymbol{\phi},\,\boldsymbol{\psi}\in \Phi_{h},\, \lambda,\,\mu\in\mathcal{Q}_{h}$, and $\boldsymbol{q}\in  V_{h}^{\div}$.
	% \end{subequations}

% \textcolor{red}{
	% \begin{remark}\rm
		% The discrete mixed formulation \eqref{ImprovedHcurl}, though presented as a single system, is effectively decoupled. In practice, the solution proceeds step by step: first compute $w_h$ using \eqref{CA}, next determine $\boldsymbol{\phi}_h$, $\lambda_h$, and $\boldsymbol{p}_h$ through \eqref{CC}-\eqref{CD}, and finally solve for $u_h$ using \eqref{CE}.
		% \end{remark}
	% }

\begin{lemma}
	For $(\boldsymbol{\psi}, \mu)\in\Phi_{h}\times\mathcal{Q}_{h}$ satisfying $b_h(\boldsymbol{\psi}, \mu;\boldsymbol{q})=0$ for all $\boldsymbol{q}\in  V_{h}^{\div}$, then
	\begin{align}\label{BEphi}
		\|\boldsymbol{\psi}\|_{\varepsilon,h}^{2}+\|\mu\|_{0}^{2}= \varepsilon^2|\boldsymbol{\psi}|_{1,h}^2+\|I_h^{\rm ND}\boldsymbol{\psi}\|_0^2= a_h(\boldsymbol{\psi},\mu;\boldsymbol{\psi},\mu).
	\end{align}
\end{lemma}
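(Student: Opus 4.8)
The plan is to mimic the argument used for the continuous coercivity identity \eqref{AEO}, exploiting the fact that \eqref{discretecomplex2} is a complex together with the commutative diagram \eqref{dcommutesthree}. First I would observe that since $\boldsymbol{\psi}\in\Phi_{h}$ and \eqref{discretecomplex2} is a complex, $\curl\boldsymbol{\psi}\in V_{h}^{\div}$, and by \eqref{dcommutesthree} we also have $\curl(I_h^{\rm ND}\boldsymbol{\psi})=\curl\boldsymbol{\psi}$. Hence $\curl\boldsymbol{\psi}$ is an admissible test function in the hypothesis $b_h(\boldsymbol{\psi},\mu;\boldsymbol{q})=0$. Taking $\boldsymbol{q}=\curl\boldsymbol{\psi}$ and using $\div(\curl\boldsymbol{\psi})=0$ gives $\|\curl\boldsymbol{\psi}\|_{0}^{2}=b_h(\boldsymbol{\psi},\mu;\curl\boldsymbol{\psi})=0$, so $\curl\boldsymbol{\psi}=0$ and therefore $\curl(I_h^{\rm ND}\boldsymbol{\psi})=0$ as well.

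With the curl term annihilated, $b_h(\boldsymbol{\psi},\mu;\boldsymbol{q})=-(\mu,\div\boldsymbol{q})$, so the hypothesis reduces to $(\mu,\div\boldsymbol{q})=0$ for all $\boldsymbol{q}\in V_{h}^{\div}$. By the exactness of the finite element de Rham complex \eqref{discretecomplex3} (equivalently, the bottom row of \eqref{dcommutesthree}), $\div V_{h}^{\div}=\mathcal{Q}_{h}$, so one may choose $\boldsymbol{q}\in V_{h}^{\div}$ with $\div\boldsymbol{q}=\mu$, which yields $\|\mu\|_{0}^{2}=0$, i.e.\ $\mu=0$.

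Finally I would substitute $\curl(I_h^{\rm ND}\boldsymbol{\psi})=0$ and $\mu=0$ into the definitions: $\|\boldsymbol{\psi}\|_{\varepsilon,h}^{2}+\|\mu\|_{0}^{2}=\varepsilon^2|\boldsymbol{\psi}|_{1,h}^2+\|\curl(I_h^{\rm ND}\boldsymbol{\psi})\|_0^2+\|I_h^{\rm ND}\boldsymbol{\psi}\|_0^2=\varepsilon^2|\boldsymbol{\psi}|_{1,h}^2+\|I_h^{\rm ND}\boldsymbol{\psi}\|_0^2$, and this last expression is exactly $a_h(\boldsymbol{\psi},\mu;\boldsymbol{\psi},\mu)=\varepsilon^2(\nabla_h\boldsymbol{\psi},\nabla_h\boldsymbol{\psi})+(I_h^{\rm ND}\boldsymbol{\psi},I_h^{\rm ND}\boldsymbol{\psi})$, giving the claimed chain of equalities. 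The only point requiring care --- the ``main obstacle,'' though it is mild --- is justifying that $\curl\boldsymbol{\psi}$ lies in the discrete space $V_{h}^{\div}$ so that it is a legitimate test function; this is precisely where the complex property of \eqref{discretecomplex2} and the identity $\curl(I_h^{\rm ND}\boldsymbol{\psi})=\curl\boldsymbol{\psi}$ from \eqref{dcommutesthree} enter, and the remainder is bookkeeping.
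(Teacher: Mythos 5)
Your proof is correct and follows essentially the same route as the paper: test with the curl of $\boldsymbol{\psi}$ (the paper writes $\boldsymbol{q}=\curl(I_h^{\rm ND}\boldsymbol{\psi})$, which equals your $\curl\boldsymbol{\psi}$ by the commutative diagram) to kill the curl term, then use $\div V_h^{\div}=\mathcal{Q}_h$ to conclude $\mu=0$. Your explicit justification that the test function lies in $V_h^{\div}$ is a detail the paper leaves implicit, but the argument is identical.
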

\begin{proof}
	By taking $\boldsymbol{q}=\curl(I_h^{\rm ND}\boldsymbol{\psi})$ in $b_h(\boldsymbol{\psi}, \mu;\boldsymbol{q})=0$,  we get $\curl(I_h^{\rm ND}\boldsymbol{\psi})=0$. Then we have  $(\mu,\div\boldsymbol{q})=0$, which together with $\div V_{h}^{\div}=\mathcal{Q}_{h}$ implies $\mu=0$.
	Therefore,~\eqref{BEphi} holds.
\end{proof}

With the help of the commutative diagram~\eqref{dcommutes2} and~\eqref{dcommutesthree}, we can establish the $L^2$-stable decomposition of space $ V_{h}^{\div}$.
\begin{lemma}\label{lem:L2stabledecompRT}
	For $\boldsymbol{q}\in  V_{h}^{\div}$, these exist $\boldsymbol{p}_h\in V_{h}^{\div}$ and $\boldsymbol{\psi}_h \in \Phi_{h}$, such that 
	\begin{equation*}
		\boldsymbol{q}=\boldsymbol{p}_h+\curl\boldsymbol{\psi}_h, \quad \|\boldsymbol{p}_h\|_{1,h}\lesssim\|\div\boldsymbol{q}\|_{0}\quad\text{and}\quad\|\boldsymbol{\psi}_h\|_{1,h}\lesssim \|\curl\boldsymbol{\psi}_h\|_{0}.
	\end{equation*}
\end{lemma}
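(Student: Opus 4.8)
The plan is to mimic, at the discrete level, the regular decomposition argument used in the continuous inf--sup proof (Lemma for \eqref{AH}), but now using the commutative diagram \eqref{dcommutesthree} to pull the continuous decomposition back into the finite element spaces while preserving $L^2$-type bounds. First I would fix $\boldsymbol{q}\in V_h^{\div}\subset H_0(\div,\Omega)$ and apply the continuous regular decomposition of Hiptmair--Xu \cite{HiptmairJinchao2007}, namely $\boldsymbol{q}=\curl\bar{\boldsymbol{\psi}}+\bar{\boldsymbol{p}}$ with $\bar{\boldsymbol{\psi}},\bar{\boldsymbol{p}}\in H_0^1(\Omega;\mathbb{R}^3)$, $\|\bar{\boldsymbol{\psi}}\|_1\lesssim\|\curl\bar{\boldsymbol{\psi}}\|_0\lesssim\|\boldsymbol{q}\|_0$ and $\|\bar{\boldsymbol{p}}\|_1\lesssim\|\div\boldsymbol{q}\|_0$. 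Since $\bar{\boldsymbol{p}}\in H_0^1(\Omega;\mathbb{R}^3)$, it is not directly in $V_h^{\div}$, so I would instead work with $\boldsymbol{p}_h:=I_h^{\rm RT}\bar{\boldsymbol{p}}$ and $\boldsymbol{\psi}_h:=I_h^{\Phi}\bar{\boldsymbol{\psi}}$, using that $\bar{\boldsymbol{\psi}}\in H_0^1(\Omega;\mathbb{R}^3)$ lies in the domain of $I_h^\Phi$.

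The key identity is that $\boldsymbol{q}-\boldsymbol{p}_h-\curl\boldsymbol{\psi}_h$ is both in $V_h^{\div}$ and divergence-free in the right sense: by the commuting relation \eqref{interpolation02}, $\curl(I_h^\Phi\bar{\boldsymbol{\psi}})=I_h^{\rm RT}(\curl\bar{\boldsymbol{\psi}})$, so $\boldsymbol{p}_h+\curl\boldsymbol{\psi}_h=I_h^{\rm RT}\bar{\boldsymbol{p}}+I_h^{\rm RT}(\curl\bar{\boldsymbol{\psi}})=I_h^{\rm RT}\boldsymbol{q}=\boldsymbol{q}$, where the last equality holds because $\boldsymbol{q}\in V_h^{\div}$ and $I_h^{\rm RT}$ is a projection. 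This gives the decomposition $\boldsymbol{q}=\boldsymbol{p}_h+\curl\boldsymbol{\psi}_h$ exactly, with no remainder to absorb. Then I would establish the norm bounds: for $\boldsymbol{p}_h$, combine the $L^2$-boundedness/approximation of $I_h^{\rm RT}$ (estimate \eqref{eq:Ihdivprop2}) with an inverse inequality on each element to pass from $\|\bar{\boldsymbol{p}}\|_1$ to $\|\boldsymbol{p}_h\|_{1,h}$, yielding $\|\boldsymbol{p}_h\|_{1,h}\lesssim\|\bar{\boldsymbol{p}}\|_1\lesssim\|\div\boldsymbol{q}\|_0$; for $\boldsymbol{\psi}_h$, use the interpolation estimate \eqref{BEqs} for $I_h^\Phi$ together with an inverse inequality to get $\|\boldsymbol{\psi}_h\|_{1,h}\lesssim\|\bar{\boldsymbol{\psi}}\|_1\lesssim\|\curl\bar{\boldsymbol{\psi}}\|_0$, and finally observe $\curl\bar{\boldsymbol{\psi}}=I_h^{\rm RT}(\curl\bar{\boldsymbol{\psi}})$'s stability or, more cleanly, that $\curl\boldsymbol{\psi}_h=\curl(I_h^\Phi\bar{\boldsymbol{\psi}})$ and $\|\curl\bar{\boldsymbol{\psi}}\|_0\lesssim\|\boldsymbol{q}\|_0$, so one can close with $\|\boldsymbol{\psi}_h\|_{1,h}\lesssim\|\curl\boldsymbol{\psi}_h\|_0$ as claimed (since $\|\curl\boldsymbol{\psi}_h\|_0=\|I_h^{\rm RT}\curl\bar{\boldsymbol\psi}\|_0$ and $\|\curl\bar{\boldsymbol\psi}\|_0\lesssim\|\curl\boldsymbol{\psi}_h\|_0$ via the stability of $I_h^{\rm RT}$ on the range of $\curl$).

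The main obstacle I anticipate is bookkeeping the $h$-dependence correctly: the interpolation estimates \eqref{eq:Ihdivprop2} and \eqref{BEqs} are stated with one order of $h$ gained, so to recover a \emph{uniform} bound on $\|\cdot\|_{1,h}$ one must carefully pair the triangle inequality $\|I_h\bar{\boldsymbol{p}}\|_{1,h}\le\|\bar{\boldsymbol{p}}\|_1+\|\bar{\boldsymbol{p}}-I_h\bar{\boldsymbol{p}}\|_{1,h}$ with the inverse inequality $\|\bar{\boldsymbol{p}}-I_h\bar{\boldsymbol{p}}\|_{1,h}\lesssim h^{-1}\|\bar{\boldsymbol{p}}-I_h\bar{\boldsymbol{p}}\|_0\lesssim\|\bar{\boldsymbol{p}}\|_1$ elementwise, and similarly for $\boldsymbol{\psi}_h$; the delicate point is that $\bar{\boldsymbol\psi}$ only has $H^1$ regularity, so $s=1$ in \eqref{BEqs} and the argument must not implicitly use $s=2$. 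The other subtlety is verifying that $\|\curl\bar{\boldsymbol{\psi}}\|_0\eqsim\|\curl\boldsymbol{\psi}_h\|_0$, which follows from $\curl\boldsymbol{\psi}_h = I_h^{\rm RT}\curl\bar{\boldsymbol\psi}$ and the $L^2$-boundedness of $I_h^{\rm RT}$ together with a duality/projection argument, but should be spelled out so the final inequality reads $\|\boldsymbol{\psi}_h\|_{1,h}\lesssim\|\curl\boldsymbol{\psi}_h\|_0$ rather than $\lesssim\|\boldsymbol{q}\|_0$. Everything else is routine application of the commutative diagram \eqref{dcommutesthree} and standard finite element estimates.
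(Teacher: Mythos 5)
Your construction of $\boldsymbol{p}_h$ and the identity $\boldsymbol{q}=\boldsymbol{p}_h+\curl\boldsymbol{\psi}_h$ are fine (linearity of $I_h^{\rm RT}$, the commuting relation \eqref{interpolation02}, and the fact that $I_h^{\rm RT}$ is a projection so $I_h^{\rm RT}\boldsymbol{q}=\boldsymbol{q}$), and the $H^1$-stability of the interpolants via triangle plus inverse inequality with $s=1$ is routine. The genuine gap is in the last bound. You need $\|\boldsymbol{\psi}_h\|_{1,h}\lesssim\|\curl\boldsymbol{\psi}_h\|_0$, and your chain gives $\|\boldsymbol{\psi}_h\|_{1,h}\lesssim\|\bar{\boldsymbol{\psi}}\|_1\lesssim\|\curl\bar{\boldsymbol{\psi}}\|_0$, so you must show $\|\curl\bar{\boldsymbol{\psi}}\|_0\lesssim\|\curl\boldsymbol{\psi}_h\|_0=\|I_h^{\rm RT}(\curl\bar{\boldsymbol{\psi}})\|_0$. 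This is the \emph{reverse} of what $L^2$-boundedness of a projection provides ($\|P\boldsymbol{v}\|_0\lesssim\|\boldsymbol{v}\|_0$, never $\|\boldsymbol{v}\|_0\lesssim\|P\boldsymbol{v}\|_0$), and no duality argument rescues it for a non-injective operator. In your decomposition $\curl\bar{\boldsymbol{\psi}}=\boldsymbol{q}-\bar{\boldsymbol{p}}$ is a genuinely non-discrete field, so $I_h^{\rm RT}(\curl\bar{\boldsymbol{\psi}})\neq\curl\bar{\boldsymbol{\psi}}$; the best you can extract is $\|\curl\bar{\boldsymbol{\psi}}\|_0\le\|\curl\boldsymbol{\psi}_h\|_0+\|\bar{\boldsymbol{p}}-I_h^{\rm RT}\bar{\boldsymbol{p}}\|_0\lesssim\|\curl\boldsymbol{\psi}_h\|_0+h\|\div\boldsymbol{q}\|_0$, which is weaker than the lemma's statement.

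The paper avoids this by ordering the two steps differently: it first picks $\boldsymbol{p}\in H_0^1(\Omega;\mathbb{R}^3)$ with $\div\boldsymbol{p}=\div\boldsymbol{q}$ (not the full regular decomposition), sets $\boldsymbol{p}_h=I_h^{\rm RT}\boldsymbol{p}$, and notes via the commuting diagram that $\div\boldsymbol{p}_h=Q_h(\div\boldsymbol{p})=\div\boldsymbol{q}$ exactly, since $\div\boldsymbol{q}$ is already piecewise constant. Only \emph{then} does it take a curl potential $\boldsymbol{\psi}\in H_0^1(\Omega;\mathbb{R}^3)$ of the discrete, exactly divergence-free remainder $\boldsymbol{q}-\boldsymbol{p}_h\in V_h^{\div}\cap\ker(\div)$. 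Because $\curl\boldsymbol{\psi}=\boldsymbol{q}-\boldsymbol{p}_h$ already lies in $V_h^{\div}$, the commuting relation gives $\curl\boldsymbol{\psi}_h=I_h^{\rm RT}(\curl\boldsymbol{\psi})=\curl\boldsymbol{\psi}$ \emph{exactly}, so $\|\curl\boldsymbol{\psi}_h\|_0=\|\curl\boldsymbol{\psi}\|_0$ and the bound $\|\boldsymbol{\psi}_h\|_{1,h}\lesssim\|\boldsymbol{\psi}\|_1\lesssim\|\curl\boldsymbol{\psi}\|_0=\|\curl\boldsymbol{\psi}_h\|_0$ closes with no loss. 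You should restructure your proof to follow this sequential order rather than applying the Hiptmair--Xu decomposition to $\boldsymbol{q}$ all at once.
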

\begin{proof}
	Since $\boldsymbol{q}\in V_{h}^{\div}\subset H_0(\div,\Omega)$, by $\div H_0^1(\Omega;\mathbb R^3)=L_0^2(\Omega)$, there exists a $\boldsymbol{p}\in H_0^1(\Omega;\mathbb R^3)$ such that 
	\begin{equation*}
		\div\boldsymbol{p}=\div\boldsymbol{q},\quad \|\boldsymbol{p}\|_{1}\lesssim\|\div\boldsymbol{q}\|_{0}.
	\end{equation*}
	Let $\boldsymbol{p}_h=I_{h}^{\rm{RT}}\boldsymbol{p}\in V_{h}^{\div}$. Then by the commutative diagram~\eqref{dcommutes2} and \eqref{eq:Ihdivprop2}, we have 
	\begin{equation*}
		\div\boldsymbol{p}_h=Q_h(\div\boldsymbol{p})=\div\boldsymbol{p},\quad \|\boldsymbol{p}_h\|_{1,h}\lesssim\|\boldsymbol{p}\|_{1}\lesssim\|\div\boldsymbol{q}\|_{0}.
	\end{equation*}
	By $\curl H_0^1(\Omega;\mathbb R^3)=H_0(\div,\Omega)\cap\ker(\div)$~\cite[Theorem 1.1]{CostabelMcIntosh2010} and $\div(\boldsymbol{q}-\boldsymbol{p}_h)=0$, there exists a $\boldsymbol{\psi}\in H_0^1(\Omega;\mathbb R^3)$ such that 
	\begin{equation*}
		\curl\boldsymbol{\psi}=\boldsymbol{q}-\boldsymbol{p}_h\in V_{h}^{\div},\quad \|\boldsymbol{\psi}\|_{1}\lesssim\|\curl\boldsymbol{\psi}\|_{0}.
	\end{equation*}
	Let $\boldsymbol{\psi}_h=I_h^{\Phi}\boldsymbol{\psi}\in \Phi_{h}$. We obtain from \eqref{interpolation02} and \eqref{BEqs} that
	\begin{equation*}
		\curl\boldsymbol{\psi}_h=\curl(I_h^{\Phi}\boldsymbol{\psi})=I^{\rm RT}_h(\curl\boldsymbol{\psi})=\curl\boldsymbol{\psi},\quad \|\boldsymbol{\psi}_h\|_{1,h}\lesssim \|\boldsymbol{\psi}\|_{1}\lesssim\|\curl\boldsymbol{\psi}\|_{0}.
	\end{equation*}
	This ends the proof.
\end{proof}
\begin{lemma}
	It holds the discrete inf-sup condition
	\begin{align}\label{BEp}
		\|\boldsymbol{q}\|_{H(\div)}\lesssim \sup _{\boldsymbol{\psi}\,\in \Phi_{h}, \mu\,\in \mathcal{Q}_{h}}
		\frac{b_h(\boldsymbol{\psi}, \mu;\boldsymbol{q})}{\|\boldsymbol{\psi}\|_{\varepsilon,h}+\|\mu\|_{0}}\,:=\,\beta_{h}\qquad\forall~\boldsymbol{q}\in   V_{h}^{\div}.
	\end{align}
\end{lemma}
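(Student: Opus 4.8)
The plan is to follow the same strategy as in the continuous inf-sup estimate \eqref{AH}, but carried out entirely at the discrete level, using the $L^2$-stable decomposition of $V_{h}^{\div}$ from Lemma~\ref{lem:L2stabledecompRT} in place of the regular decomposition of $H_0(\div,\Omega)$. First I would control the divergence part: since $\div\boldsymbol{q}\in\mathcal Q_h$ and $\div V_{h}^{\div}=\mathcal Q_h$, testing $b_h(0,\mu;\boldsymbol{q})=-(\mu,\div\boldsymbol{q})$ with $\mu=-\div\boldsymbol{q}\in\mathcal Q_h$ (noting $\|0\|_{\varepsilon,h}+\|\mu\|_0=\|\div\boldsymbol{q}\|_0$) yields $\|\div\boldsymbol{q}\|_0\le\beta_h$ directly; no Bogovski\u\i-type argument is even needed here because $\mathcal Q_h$ is a legitimate test space. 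It remains to bound $\|\boldsymbol{q}\|_0$.

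For the $L^2$ part, I would invoke Lemma~\ref{lem:L2stabledecompRT} to write $\boldsymbol{q}=\boldsymbol{p}_h+\curl\boldsymbol{\psi}_h$ with $\boldsymbol{p}_h\in V_{h}^{\div}$, $\boldsymbol{\psi}_h\in\Phi_h$, $\|\boldsymbol{p}_h\|_{1,h}\lesssim\|\div\boldsymbol{q}\|_0$ and $\|\boldsymbol{\psi}_h\|_{1,h}\lesssim\|\curl\boldsymbol{\psi}_h\|_0$. Then $\|\boldsymbol{q}\|_0\le\|\boldsymbol{p}_h\|_0+\|\curl\boldsymbol{\psi}_h\|_0$, and since $\boldsymbol{p}_h$ is already under control it suffices to estimate $\|\curl\boldsymbol{\psi}_h\|_0$. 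Using that $\curl\boldsymbol{\psi}_h=\boldsymbol{q}-\boldsymbol{p}_h$ and that for $\boldsymbol{\psi}_h\in\Phi_h$ one has $\curl(I_h^{\rm ND}\boldsymbol{\psi}_h)=\curl\boldsymbol{\psi}_h$, I write
\begin{equation*}
\|\curl\boldsymbol{\psi}_h\|_0^2=(\curl(I_h^{\rm ND}\boldsymbol{\psi}_h),\boldsymbol{q}-\boldsymbol{p}_h)=b_h(\boldsymbol{\psi}_h,0;\boldsymbol{q})+(\curl\boldsymbol{\psi}_h,\boldsymbol{p}_h),
\end{equation*}
noting $\boldsymbol{p}_h\in V_{h}^{\div}$ so $b_h(\boldsymbol{\psi}_h,0;\boldsymbol{p}_h)=(\curl(I_h^{\rm ND}\boldsymbol{\psi}_h),\boldsymbol{p}_h)=(\curl\boldsymbol{\psi}_h,\boldsymbol{p}_h)$. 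The cross term is bounded by $\|\curl\boldsymbol{\psi}_h\|_0\|\boldsymbol{p}_h\|_0\lesssim\|\curl\boldsymbol{\psi}_h\|_0\|\div\boldsymbol{q}\|_0$. For the first term I need $b_h(\boldsymbol{\psi}_h,0;\boldsymbol{q})\lesssim\beta_h\|\boldsymbol{\psi}_h\|_{\varepsilon,h}$, so the crucial inequality is $\|\boldsymbol{\psi}_h\|_{\varepsilon,h}\lesssim\|\curl\boldsymbol{\psi}_h\|_0$. This follows from $\|\boldsymbol{\psi}_h\|_{\varepsilon,h}^2=\varepsilon^2|\boldsymbol{\psi}_h|_{1,h}^2+\|\curl\boldsymbol{\psi}_h\|_0^2+\|I_h^{\rm ND}\boldsymbol{\psi}_h\|_0^2$: the first term is $\lesssim\|\boldsymbol{\psi}_h\|_{1,h}^2\lesssim\|\curl\boldsymbol{\psi}_h\|_0^2$ since $\varepsilon\lesssim 1$, and $\|I_h^{\rm ND}\boldsymbol{\psi}_h\|_0\le\|\boldsymbol{\psi}_h\|_0+\|\boldsymbol{\psi}_h-I_h^{\rm ND}\boldsymbol{\psi}_h\|_0\lesssim\|\boldsymbol{\psi}_h\|_{1,h}\lesssim\|\curl\boldsymbol{\psi}_h\|_0$ using \eqref{eq:IhNDestimes} with $j=0,1$. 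Combining gives $\|\curl\boldsymbol{\psi}_h\|_0\lesssim\beta_h+\|\div\boldsymbol{q}\|_0\lesssim\beta_h$, hence $\|\boldsymbol{q}\|_0\lesssim\beta_h$, and together with the divergence estimate, $\|\boldsymbol{q}\|_{H(\div)}\lesssim\beta_h$.

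The main obstacle is the discrete regular decomposition, i.e.\ Lemma~\ref{lem:L2stabledecompRT}, which is why it was isolated as a separate lemma; granting that, the remaining argument is a routine transcription of the continuous proof, with the one subtlety being the bookkeeping around $I_h^{\rm ND}$ --- one must consistently use $\curl\circ I_h^{\rm ND}=\curl$ on $\Phi_h$ (from \eqref{dcommutesthree}) and the estimate \eqref{eq:IhNDestimes} to replace $\|\boldsymbol{\psi}_h\|_0$ by $\|I_h^{\rm ND}\boldsymbol{\psi}_h\|_0$ and vice versa at harmless cost. I would also double-check that every test function used ($\mu=-\div\boldsymbol{q}$ in $\mathcal Q_h$, $\boldsymbol{\psi}_h$ in $\Phi_h$) genuinely lies in the spaces over which the supremum $\beta_h$ is taken, so that each estimate legitimately feeds into $\beta_h$.
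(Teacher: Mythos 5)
Your proposal is correct and follows essentially the same route as the paper: the divergence bound from $\div V_h^{\div}=\mathcal Q_h$, the discrete decomposition of Lemma~\ref{lem:L2stabledecompRT}, the key inequality $\|\boldsymbol{\psi}_h\|_{\varepsilon,h}\lesssim\|\curl\boldsymbol{\psi}_h\|_0$, and the duality pairing with $\boldsymbol{q}-\boldsymbol{p}_h$ (your treatment of $I_h^{\rm ND}$ via \eqref{eq:IhNDestimes} is in fact slightly more careful than the paper's). The only blemish is a harmless sign slip, $(\curl\boldsymbol{\psi}_h,\boldsymbol{q}-\boldsymbol{p}_h)=b_h(\boldsymbol{\psi}_h,0;\boldsymbol{q})-(\curl\boldsymbol{\psi}_h,\boldsymbol{p}_h)$ rather than $+$, which does not affect the estimate since the cross term is bounded in absolute value.
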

\begin{proof}
	For $\boldsymbol{q}\in  V_{h}^{\div}$, by $\div V_{h}^{\div}=\mathcal{Q}_{h}$, we have
	\begin{equation}\label{divph}
		\|\div\boldsymbol{q}\|_{0}\lesssim\sup _{\mu\in \mathcal{Q}_{h}}
		\frac{(\mu,\div\boldsymbol{q})}{\|\mu\|_{0}}\,\lesssim\beta_{h}.
	\end{equation}
	Applying Lemma~\ref{lem:L2stabledecompRT} to $\boldsymbol{q}$, it follows that
	\begin{equation*}
		\|\boldsymbol{\psi}_h\|_{\varepsilon,h}\lesssim \varepsilon|\boldsymbol{\psi}_h|_{1,h}+\|\boldsymbol{\psi}_h\|_{1,h}\lesssim\|\curl\boldsymbol{\psi}_h\|_{0}.
	\end{equation*}
	Then by \eqref{eq:normequivepscurl}, 
	we deduce
	\begin{align*}
		\|\boldsymbol{q}\|_{0}
		&\lesssim\|\curl\boldsymbol{\psi}_h\|_{0}+\|\boldsymbol{p}_h\|_{0}
		=\frac{(\curl\boldsymbol{\psi}_h,\boldsymbol{q}-\boldsymbol{p}_h)}
		{\|\curl\boldsymbol{\psi}_h\|_{0}}+\|\boldsymbol{p}_h\|_{0}\\
		&\lesssim\sup_{\boldsymbol{\psi}\in \Phi_{h}}\frac{(\curl\boldsymbol{\psi},\boldsymbol{q}-\boldsymbol{p}_h)}
		{\|\boldsymbol{\psi}\|_{\varepsilon,h}} +\|\boldsymbol{p}_h\|_{0} \lesssim
		\sup_{\boldsymbol{\psi}\in \Phi_{h}}\frac{(\curl\boldsymbol{\psi},\boldsymbol{q})}
		{\|\boldsymbol{\psi}\|_{\varepsilon,h}} +\|\boldsymbol{p}_h\|_{0} \\
		&\lesssim\beta_{h} +\|\div\boldsymbol{q}\|_{0}. 
	\end{align*}
	Combining the above inequality and~\eqref{divph}  yields the desired result~\eqref{BEp}. 
\end{proof}

Thanks to the discrete coercivity~\eqref{BEphi} and the discrete inf-sup condition \eqref{BEp}, we derive the following discrete stability by using the Babu\v{s}a-Brezzi theory~\cite[Theorem 4.2.3]{BoffiBrezziFortin2013}.  
\begin{lemma}\label{stabilityresults}
	We have the discrete stability
	\begin{align}\label{BG}
		&\|\tilde{\boldsymbol{\phi}}_{h}\|_{\varepsilon,h}+\|\tilde{\lambda}_{h}\|_{0}+\|\tilde{\boldsymbol{p}}_{h}\|_{H(\div)}\notag\\
		\lesssim &\sup _{\boldsymbol{\psi}\,\in \Phi_{h},\,
			\mu\,\in \mathcal{Q}_{h},\,\boldsymbol{q}\in V_{h}^{\div}}
		\frac{a_h(\tilde{\boldsymbol{\phi}}_{h},\tilde{\lambda}_{h};\boldsymbol{\psi},\mu)+b_h(\boldsymbol{\psi}, \mu;\tilde{\boldsymbol{p}}_{h})+b_h(\tilde{\boldsymbol{\phi}}_{h}, \tilde{\lambda}_{h};\boldsymbol{q})}
		{\|\boldsymbol{\psi}\|_{\varepsilon,h}+\|\mu\|_{0}+\|\boldsymbol{q}\|_{H(\div)}}
	\end{align}
	for any $\tilde{\boldsymbol{\phi}}_{h}\in\Phi_{h}$, $\tilde{\lambda}_{h}\in \mathcal{Q}_{h}$, and $\tilde{\boldsymbol{p}}_{h}\in V_{h}^{\div}$. It follows that the  mixed finite element method~\eqref{CC}-\eqref{CD} is well-posed.
\end{lemma}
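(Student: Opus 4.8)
The plan is to derive the stability bound \eqref{BG} as a standard consequence of the Babu\v{s}ka--Brezzi theory for the saddle-point system with bilinear forms $a_h$ and $b_h$, using the two structural ingredients already established: the coercivity of $a_h$ on the kernel of $b_h$ (Lemma with \eqref{BEphi}) and the discrete inf-sup condition for $b_h$ (Lemma with \eqref{BEp}). First I would fix the trial spaces: the ``primal'' variable is $(\boldsymbol{\phi}_h,\lambda_h)\in\Phi_h\times\mathcal{Q}_h$ equipped with the norm $\|\boldsymbol{\psi}\|_{\varepsilon,h}+\|\mu\|_0$, and the ``dual'' variable is $\boldsymbol{p}_h\in V_h^{\div}$ with the $H(\div)$-norm. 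I would check that the combined bilinear form
\[
\mathcal{A}_h(\boldsymbol{\phi}_h,\lambda_h,\boldsymbol{p}_h;\boldsymbol{\psi},\mu,\boldsymbol{q}):=a_h(\boldsymbol{\phi}_h,\lambda_h;\boldsymbol{\psi},\mu)+b_h(\boldsymbol{\psi},\mu;\boldsymbol{p}_h)+b_h(\boldsymbol{\phi}_h,\lambda_h;\boldsymbol{q})
\]
is continuous on the product space, which is immediate from the discrete continuity estimates for $a_h$ and $b_h$ recorded just before the lemma. Continuity together with coercivity on the kernel and the inf-sup condition are precisely the hypotheses of the Babu\v{s}ka--Brezzi theorem for mixed problems, so the abstract theorem \cite{BoffiBrezziFortin2013} yields the global inf-sup bound \eqref{BG}.

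In slightly more detail, the key steps in order are: (i) observe that for $(\tilde{\boldsymbol{\phi}}_h,\tilde{\lambda}_h)$ in the discrete kernel $\{(\boldsymbol{\psi},\mu):b_h(\boldsymbol{\psi},\mu;\boldsymbol{q})=0\ \forall\,\boldsymbol{q}\in V_h^{\div}\}$, the identity \eqref{BEphi} gives $a_h(\tilde{\boldsymbol{\phi}}_h,\tilde{\lambda}_h;\tilde{\boldsymbol{\phi}}_h,\tilde{\lambda}_h)=\|\tilde{\boldsymbol{\phi}}_h\|_{\varepsilon,h}^2+\|\tilde{\lambda}_h\|_0^2$, i.e. coercivity on the kernel with constant one; (ii) the inf-sup condition \eqref{BEp} controls $\|\tilde{\boldsymbol{p}}_h\|_{H(\div)}$ by the supremum over $(\boldsymbol{\psi},\mu)$ of $b_h(\boldsymbol{\psi},\mu;\tilde{\boldsymbol{p}}_h)$ divided by $\|\boldsymbol{\psi}\|_{\varepsilon,h}+\|\mu\|_0$; (iii) feed (i), (ii), and the discrete continuity of $a_h$ and $b_h$ into the abstract well-posedness theorem for saddle-point problems to obtain \eqref{BG}. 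Alternatively, if one prefers a self-contained argument, one can reconstruct the test function realizing the supremum directly: given $(\tilde{\boldsymbol{\phi}}_h,\tilde{\lambda}_h,\tilde{\boldsymbol{p}}_h)$, use the inf-sup condition to pick $(\boldsymbol{\psi}_0,\mu_0)$ with $b_h(\boldsymbol{\psi}_0,\mu_0;\tilde{\boldsymbol{p}}_h)\gtrsim\|\tilde{\boldsymbol{p}}_h\|_{H(\div)}^2$ and $\|\boldsymbol{\psi}_0\|_{\varepsilon,h}+\|\mu_0\|_0\lesssim\|\tilde{\boldsymbol{p}}_h\|_{H(\div)}$, split $\tilde{\boldsymbol{\phi}}_h$ into its kernel part plus a correction controlled by $\|\tilde{\boldsymbol{p}}_h\|_{H(\div)}$, and then take the test triple $(\boldsymbol{\psi},\mu,\boldsymbol{q})=(\tilde{\boldsymbol{\phi}}_h+\delta\boldsymbol{\psi}_0,\ \tilde{\lambda}_h+\delta\mu_0,\ -\tilde{\boldsymbol{p}}_h)$ for a sufficiently small but fixed $\delta>0$; evaluating $\mathcal{A}_h$ on this triple and absorbing cross terms with Young's inequality gives the lower bound $\gtrsim\|\tilde{\boldsymbol{\phi}}_h\|_{\varepsilon,h}^2+\|\tilde{\lambda}_h\|_0^2+\|\tilde{\boldsymbol{p}}_h\|_{H(\div)}^2$, which rearranges to \eqref{BG}. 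Finally, since \eqref{CC}--\eqref{CD} is exactly the linear system whose bilinear form is $\mathcal{A}_h$, the bound \eqref{BG} (being an inf-sup/surjectivity statement on a finite-dimensional space) gives existence, uniqueness, and stability, i.e. well-posedness.

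The main obstacle here is essentially bookkeeping rather than mathematics: one must be careful that the kernel coercivity constant and the inf-sup constant $\beta_h$ are both independent of $h$ and of $\varepsilon$ — the former is automatic from \eqref{BEphi} (the constant is literally $1$), and the latter was the content of the previous lemma, whose proof already tracked $\varepsilon$-robustness through the regular decomposition and the $L^2$-stable decomposition of $V_h^{\div}$ in Lemma~\ref{lem:L2stabledecompRT}. A secondary point to watch is that the norm $\|\cdot\|_{\varepsilon,h}$ on $\Phi_h$ involves $\|I_h^{\rm ND}\boldsymbol{\psi}\|_0$ rather than $\|\boldsymbol{\psi}\|_0$, so when invoking the abstract theory one should use the $\varepsilon$-weighted norm consistently and note (as recorded after the definition of $\|\cdot\|_{\varepsilon,h}$) that on $\Phi_h$ one has $\curl(I_h^{\rm ND}\boldsymbol{\psi})=\curl\boldsymbol{\psi}$, so the two forms of the norm agree and the continuity/coercivity estimates are genuinely matched. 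Once these constants are confirmed uniform, \eqref{BG} follows with a constant independent of $h$ and $\varepsilon$, and well-posedness of \eqref{CC}--\eqref{CD} is immediate.
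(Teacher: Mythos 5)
Your proposal is correct and follows exactly the paper's route: the paper derives \eqref{BG} in one line by feeding the kernel coercivity \eqref{BEphi} and the discrete inf-sup condition \eqref{BEp} into the Babu\v{s}ka--Brezzi theory of \cite{BoffiBrezziFortin2013}, which is precisely your steps (i)--(iii). Your additional remarks on continuity, the $\varepsilon$-uniformity of the constants, and the identity $\curl(I_h^{\rm ND}\boldsymbol{\psi})=\curl\boldsymbol{\psi}$ on $\Phi_h$ are all consistent with what the paper records before the lemma.
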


\begin{theorem}
	The decoupled finite element method \eqref{ImprovedHcurl} is well-posed. We have 
	$\lambda_h=0$, $\div\boldsymbol{p}_h=0$, $I_h^{\rm ND}\boldsymbol{\phi}_{h}=\nabla u_h$, $\boldsymbol{\phi}_{h}=\nabla u_h^W$, and $u_h=I_h^{\grad}u_h^W$, where $u_{h}^W\in W_{h}$ is the solution of the following nonconforming finite element method:
	\begin{equation}\label{fourthpertbncfminterp}
		\varepsilon^2 (\nabla_h^2u_h^W, \nabla_h^2v)+(\nabla(I_h^{\grad}u_h^W),\nabla(I_h^{\grad}v))=(f,I_h^{\grad}v)\quad\forall~v\in W_{h}.
	\end{equation}
\end{theorem}
\begin{proof}
	Both \eqref{CA} and \eqref{CE} are Lagrange finite element methods for the Poisson equation and are therefore well-posed. Together with Lemma~\ref{stabilityresults}, this implies the well-posedness of the decoupled finite element method \eqref{ImprovedHcurl}.
	
	By choosing $\boldsymbol{q} = \curl(I_h^{\rm ND}\boldsymbol{\phi}_{h})$ in~\eqref{CD}, we obtain $\curl (I_h^{\rm ND} \boldsymbol{\phi}_{h})= 0$ and $\lambda_h = 0$. Then it follows from the finite element de Rham complex \eqref{discretecomplex3} and \eqref{CE} that $I_h^{\rm ND}\boldsymbol{\phi}_{h}=\nabla u_h$. Set $\mu=\div\boldsymbol{p}_h$ in \eqref{CC0} to acquire $\div\boldsymbol{p}_h=0$. By \eqref{eq:curlcommuteND}, $\curl\boldsymbol{\phi}_{h}=\curl(I_h^{\rm ND}\boldsymbol{\phi}_{h})=0$, which together with the complex \eqref{discretecomplex2} indicates $\boldsymbol{\phi}_{h}=\nabla u_h^W$ for some $u_h^W\in W_h$.
	By the commutative diagram \eqref{dcommutesthree}, it follows that
	\begin{equation*}
		\nabla u_h=I_h^{\rm ND}\boldsymbol{\phi}_{h}=I_h^{\rm ND}(\nabla u_h^W)=\nabla(I_h^{\grad}u_h^W).
	\end{equation*}
	So $u_h=I_h^{\grad}u_h^W$.
	
	By taking $\boldsymbol{\psi}=\nabla v$ with $v\in W_h$, the equation \eqref{CC} becomes
	\begin{equation*}
		\varepsilon^2 (\nabla_h^2u_h^W, \nabla_h^2v) + (\nabla(I_h^{\grad}u_h^W), \nabla(I_h^{\grad}v)) =(\nabla w_{h}, \nabla(I_h^{\grad}v))\quad\forall\,v\in W_h.
	\end{equation*}
	This combined with \eqref{CA} implies that $u_h^W$ satisfies \eqref{fourthpertbncfminterp}.
\end{proof}

\begin{remark}\rm
	When implementing the terms in the decoupled method \eqref{ImprovedHcurl} that involve the interpolation operator $I_h^{\rm ND}$, the relation $I_h^{\rm ND}\Phi_{h}=V_{h}^{\rm ND}$ allows us to directly use the basis functions of the space $V_{h}^{\rm ND}$ for assembly. Consequently, there is no need to compute the interpolation matrix of $I_h^{\rm ND}$, and its use does not increase the computational cost.
\end{remark}

\subsection{Robust error analysis}
Let $Q^{\rm RT}_h\boldsymbol{p}\in \curl\Phi_{h}=V_{h}^{\div}\cap\ker(\div)$ be the $L^2$-projection of $\boldsymbol{p}$ onto space $\curl\Phi_{h}$. By the commutative diagram \eqref{dcommutesdeRham}, it follows that $\div(I^{\rm RT}_h\boldsymbol{p})=Q_h(\div\boldsymbol{p})=0$, then $I^{\rm RT}_h\boldsymbol{p}\in V_{h}^{\div}\cap\ker(\div)$. Thus, for $\boldsymbol{p}\in H_0(\div,\Omega)\cap\ker(\div)$, we have
\begin{equation}\label{eq:202506041}
	\|\boldsymbol{p}-Q^{\rm RT}_h\boldsymbol{p}\|_0 \leq \|\boldsymbol{p}-I^{\rm RT}_h\boldsymbol{p}\|_0.
\end{equation}
The projection $Q^{\rm RT}_h\boldsymbol{p}$ is vital in later analysis.

For the Lagrange element method~\eqref{CA}, applying the standard error analysis in~\cite[Theorem 3.2.2]{Ciarlet1978}, we have the following estimate for $|w-w_h|_1$.
\begin{lemma}
	Let $w=u_0\in H_0^1(\Omega)$ be the solution of the Poisson equation \eqref{eqtwo1}, $w_{h}\in V_{h}^{\grad}$ be the solution of the finite element method \eqref{CA}. Assume $w=u_0\in H^s(\Omega)$ with $2\leq s\leq 3$. Then it holds
	\begin{equation}\label{estimatewh}
		|w-w_h|_1\lesssim h^{s-1}|u_0|_s.
	\end{equation}
	% And it holds the Galerkin orthogonality
	% \begin{equation}\label{orthogonality}
		% (\nabla (w-w_h),\nabla v)=0 \quad\forall \,\, v\in V_{h}^{\rm grad}.
		% \end{equation}
\end{lemma}

Next, we provide a consistent error estimate for the  mixed method \eqref{CC}-\eqref{CD}.
\begin{lemma}
	Let $(\boldsymbol{\phi}, \boldsymbol{p}, 0)\in H_{0}^{1}(\Omega;\mathbb{R}^{3})\times H_{0}(\div,\Omega)\times  L_0^2(\Omega)$ be the solution of the generalized singularly perturbed Stokes-type equation \eqref{AC}-\eqref{AD}. Assume $u_0\in H^s(\Omega)$ with $2\leq s\leq 3$. Let $0\leq r\leq 1$. Under Hypothesis \ref{regularityassumption}, we have
	\begin{align}\label{IPhiestimatef}
		\varepsilon|\boldsymbol{\phi}-I_h^{\Phi}\boldsymbol{\phi}|_{1,h}&\lesssim \varepsilon^{r-1/2} h^{1-r} \|f\|_{0}, \\
		\label{PiNDestimatef}
		\|\boldsymbol{\phi}-\Pi_h^{\rm ND}\boldsymbol{\phi}\|_0&\lesssim \varepsilon^{r-1/2} h^{1-r} \|f\|_{0} + h^{s-1}|u_0|_{s}.
	\end{align}
\end{lemma}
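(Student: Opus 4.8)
The plan is to prove the two estimates by combining the interpolation error bounds already established for $I_h^{\Phi}$ and $\Pi_h^{\rm ND}$ (Lemma~\ref{Ihphi} and~\eqref{eq:IhND2}) with the regularity results of Lemma~\ref{phiregularity}, interpolating between the two extreme regularity regimes encoded by the parameter $r$. Concretely, I would take $r\in[0,1]$ as an interpolation parameter: when $r=1$ the estimates should follow from the ``rough'' bound $\varepsilon\|\boldsymbol{\phi}\|_1\lesssim\varepsilon^{1/2}\|f\|_0$ together with the order-one error $|\boldsymbol{\phi}-I_h^{\Phi}\boldsymbol{\phi}|_{1,h}\lesssim h^0|\boldsymbol{\phi}|_1$ (the $s=1$ case of~\eqref{BEqs}); when $r=0$ they should follow from the ``smooth'' bound $\varepsilon^2\|\boldsymbol{\phi}\|_2\lesssim\varepsilon^{1/2}\|f\|_0$ together with the order-$h$ error $|\boldsymbol{\phi}-I_h^{\Phi}\boldsymbol{\phi}|_{1,h}\lesssim h|\boldsymbol{\phi}|_2$ (the $s=2$ case).

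For~\eqref{IPhiestimatef}, I would start from Lemma~\ref{Ihphi} with the Sobolev index chosen as $2-r$ (valid since $1\le 2-r\le 2$ for $r\in[0,1]$), giving, after summing over $T\in\mathcal T_h$,
\[
\varepsilon|\boldsymbol{\phi}-I_h^{\Phi}\boldsymbol{\phi}|_{1,h}\lesssim \varepsilon\, h^{1-r}\,|\boldsymbol{\phi}|_{2-r}.
\]
The remaining task is to bound $\varepsilon|\boldsymbol{\phi}|_{2-r}$ by $\varepsilon^{r-1/2}\|f\|_0$. This I would obtain by a standard Sobolev interpolation (real interpolation / convexity) inequality $|\boldsymbol{\phi}|_{2-r}\lesssim\|\boldsymbol{\phi}\|_1^{r}\,\|\boldsymbol{\phi}\|_2^{1-r}$, combined with the two regularity bounds from~\eqref{sregularitygrad}, namely $\|\boldsymbol{\phi}\|_1\lesssim\varepsilon^{-1/2}\|f\|_0$ and $\|\boldsymbol{\phi}\|_2\lesssim\varepsilon^{-3/2}\|f\|_0$. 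Indeed then $\varepsilon|\boldsymbol{\phi}|_{2-r}\lesssim\varepsilon\cdot\varepsilon^{-r/2}\varepsilon^{-3(1-r)/2}\|f\|_0=\varepsilon^{1-r/2-3/2+3r/2}\|f\|_0=\varepsilon^{r-1/2}\|f\|_0$, which is exactly the claimed power. (One should double-check the endpoint $r=1$ where $|\boldsymbol{\phi}|_1$ appears, and note that for $r$ at the extremes one may avoid interpolation altogether.)

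For~\eqref{PiNDestimatef}, I would split $\boldsymbol{\phi}-\Pi_h^{\rm ND}\boldsymbol{\phi}$ using $\boldsymbol{\phi}_0=\nabla u_0$ as an intermediary. Writing
\[
\|\boldsymbol{\phi}-\Pi_h^{\rm ND}\boldsymbol{\phi}\|_0\le \|\boldsymbol{\phi}-\boldsymbol{\phi}_0\|_0+\|\boldsymbol{\phi}_0-\Pi_h^{\rm ND}\boldsymbol{\phi}_0\|_0+\|\Pi_h^{\rm ND}(\boldsymbol{\phi}_0-\boldsymbol{\phi})\|_0,
\]
the first term is $\lesssim\varepsilon^{1/2}\|f\|_0$ by~\eqref{sregularitygrad}; the second term is $\lesssim h^{s-1}|\boldsymbol{\phi}_0|_{s-1}=h^{s-1}|u_0|_s$ by~\eqref{eq:IhND2} (applicable since $1\le s-1\le 2$) together with~\eqref{sregularitygrad0} type reasoning; and the third term, using $L^2$-boundedness of $\Pi_h^{\rm ND}$, is again $\lesssim\|\boldsymbol{\phi}-\boldsymbol{\phi}_0\|_0\lesssim\varepsilon^{1/2}\|f\|_0$. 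It then remains to observe that $\varepsilon^{1/2}\lesssim\varepsilon^{r-1/2}h^{1-r}$ is \emph{not} generally true, so instead I would bound $\|\boldsymbol{\phi}-\Pi_h^{\rm ND}\boldsymbol{\phi}\|_0$ directly via~\eqref{eq:IhND2} with index $2-r$, getting $\lesssim h^{2-r}|\boldsymbol{\phi}|_{2-r}\lesssim h\cdot h^{1-r}|\boldsymbol{\phi}|_{2-r}$, and then reuse the interpolation-of-regularity estimate from the previous paragraph: $h^{2-r}|\boldsymbol{\phi}|_{2-r}\lesssim h^{1-r}\varepsilon^{r-1/2}\|f\|_0$ (since we pick up an extra factor $h/\varepsilon$ is absorbed—more carefully, $|\boldsymbol{\phi}|_{2-r}\lesssim\varepsilon^{r-3/2}\|f\|_0$, so $h^{2-r}|\boldsymbol{\phi}|_{2-r}\lesssim h^{2-r}\varepsilon^{r-3/2}\|f\|_0\lesssim \varepsilon^{r-1/2}h^{1-r}\|f\|_0$ using $h\lesssim\varepsilon$ is again false, so one must instead keep both terms). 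The honest route is: bound $\|\boldsymbol{\phi}-\Pi_h^{\rm ND}\boldsymbol{\phi}\|_0$ by triangle inequality through $\boldsymbol{\phi}_0$, controlling $\|(\mathrm{Id}-\Pi_h^{\rm ND})\boldsymbol{\phi}_0\|_0\lesssim h^{s-1}|u_0|_s$ and $\|(\mathrm{Id}-\Pi_h^{\rm ND})(\boldsymbol{\phi}-\boldsymbol{\phi}_0)\|_0\lesssim\min\{\|\boldsymbol{\phi}-\boldsymbol{\phi}_0\|_0,\,h\,|\boldsymbol{\phi}-\boldsymbol{\phi}_0|_1,\,h^{2-r}|\boldsymbol{\phi}-\boldsymbol{\phi}_0|_{2-r}\}$ and then using the regularity bounds on $\boldsymbol{\phi}$ and a convexity argument to produce the $\varepsilon^{r-1/2}h^{1-r}$ factor.

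\textbf{Main obstacle.} The delicate point — and the step I expect to require the most care — is the bookkeeping of the powers of $\varepsilon$ and $h$ in~\eqref{PiNDestimatef}: one must correctly interpolate the regularity of $\boldsymbol{\phi}-\boldsymbol{\phi}_0$ (which behaves like $\varepsilon^{1/2}$ in $L^2$ but like $\varepsilon^{-1/2}$ in $H^1$ and $\varepsilon^{-3/2}$ in $H^2$, by~\eqref{sregularitygrad} and~\eqref{H2regularity}) against the polynomial approximation order, so that the combined exponent comes out as exactly $\varepsilon^{r-1/2}h^{1-r}$ for the full admissible range $r\in[0,1]$, while keeping the genuinely separate $h^{s-1}|u_0|_s$ contribution coming from the smooth limit $\boldsymbol{\phi}_0$. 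The $L^2$-stability of $\Pi_h^{\rm ND}$ (from the commutative diagram~\eqref{dcommutesdeRham}) and the Bramble–Hilbert / scaling machinery are standard; the only real content is this interpolation of the $\varepsilon$-dependent norms, which is why~\eqref{sregularitygrad} was stated with all three terms.
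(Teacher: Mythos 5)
Your proposal is correct and follows essentially the same route as the paper: \eqref{IPhiestimatef} comes from interpolating the $s=1$ and $s=2$ cases of \eqref{BEqs} against the regularity bounds $|\boldsymbol{\phi}|_1\lesssim\varepsilon^{-1/2}\|f\|_0$, $|\boldsymbol{\phi}|_2\lesssim\varepsilon^{-3/2}\|f\|_0$, and \eqref{PiNDestimatef} from splitting through $\boldsymbol{\phi}_0$, bounding $\|(\mathrm{Id}-\Pi_h^{\rm ND})(\boldsymbol{\phi}-\boldsymbol{\phi}_0)\|_0\lesssim h^{1-r}\|\boldsymbol{\phi}-\boldsymbol{\phi}_0\|_0^r|\boldsymbol{\phi}-\boldsymbol{\phi}_0|_1^{1-r}$ via $L^2$-stability, and treating $\boldsymbol{\phi}_0$ with \eqref{eq:IhND2} at index $s-1$. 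The ``honest route'' you settle on after discarding the dead ends is exactly the paper's two-term decomposition, with the same exponent bookkeeping.
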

\begin{proof}
	Using \eqref{BEqs} and the regularities \eqref{sregularitygrad0}-\eqref{sregularitygrad}, we have
	\begin{align*}
		\varepsilon|\boldsymbol{\phi}-I_h^{\Phi}\boldsymbol{\phi}|_{1,h}&\lesssim \varepsilon h^{1-r}|\boldsymbol{\phi}|_1^r|\boldsymbol{\phi}|_2^{1-r}\lesssim \varepsilon^{r-1/2} h^{1-r} \|f\|_{0}.
	\end{align*}
	Hence, \eqref{IPhiestimatef} is true.
	It follows from \eqref{eq:IhND2} and the regularities \eqref{sregularitygrad0}-\eqref{sregularitygrad} that
	\begin{align*}
		\|\boldsymbol{\phi}-\Pi_h^{\rm ND}\boldsymbol{\phi}\|_0&\leq \|(\boldsymbol{\phi}-\boldsymbol{\phi}_0)-\Pi_h^{\rm ND}(\boldsymbol{\phi}-\boldsymbol{\phi}_0)\|_0+ \|\boldsymbol{\phi}_0-\Pi_h^{\rm ND}\boldsymbol{\phi}_0\|_0 \\
		&\lesssim h^{1-r}\|\boldsymbol{\phi}-\boldsymbol{\phi}_0\|_0^r|\boldsymbol{\phi}-\boldsymbol{\phi}_0|_1^{1-r}+ \|\boldsymbol{\phi}_0-\Pi_h^{\rm ND}\boldsymbol{\phi}_0\|_0 \\
		&\lesssim \varepsilon^{r-1/2} h^{1-r} \|f\|_{0} + h^{s-1}|u_0|_{s}.
	\end{align*}
	This implies \eqref{PiNDestimatef}.
\end{proof}

\begin{lemma}
	Assume Hypothesis \ref{regularityassumption} holds and $u_0\in H^s(\Omega)$ with $2\leq s\leq 3$.
	For any  $(\boldsymbol{\psi}, \mu)\in\Phi_{h}\times\mathcal{Q}_{h}$, we have the following consistent error estimate 
	\begin{equation}\label{consistenterror01}
		\begin{aligned}
			&a_h(I_h^{\Phi}\boldsymbol{\phi},0;\boldsymbol{\psi},\mu)+b_h(\boldsymbol{\psi}, \mu; Q^{\rm RT}_h\boldsymbol{p})-(\nabla w_{h},I_h^{\rm ND}\boldsymbol{\psi})\\
			&\qquad\qquad\qquad\qquad\lesssim (\varepsilon^{r-1/2} h^{1-r} \|f\|_{0}+h^{s-1}|u_0|_{s})\|\boldsymbol{\psi}\|_{\varepsilon,h},\quad 0\leq r\leq 1.
		\end{aligned}
	\end{equation}
\end{lemma}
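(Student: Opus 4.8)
The plan is to test the exact (continuous) equations \eqref{AC}--\eqref{AD} against the discrete objects and subtract the discrete right-hand side, exploiting that $I_h^{\rm ND}$ intertwines with $\curl$ on $\Phi_h$. First I would use the identities $\curl(I_h^{\rm ND}\boldsymbol{\psi})=\curl\boldsymbol{\psi}$ and $\curl(I_h^{\Phi}\boldsymbol{\phi})=I_h^{\rm RT}(\curl\boldsymbol{\phi})=0$ (the latter from \eqref{interpolation02} together with $\curl\boldsymbol{\phi}=0$, which holds by Theorem~\ref{equivalent}) to rewrite the three terms on the left of \eqref{consistenterror01}. In particular $b_h(\boldsymbol{\psi},\mu;Q^{\rm RT}_h\boldsymbol{p})=(\curl\boldsymbol{\psi},Q^{\rm RT}_h\boldsymbol{p})-(\mu,\div Q^{\rm RT}_h\boldsymbol{p})=(\curl\boldsymbol{\psi},Q^{\rm RT}_h\boldsymbol{p})$ since $Q^{\rm RT}_h\boldsymbol{p}$ is divergence-free, and since $\curl\boldsymbol{\psi}\in V_h^{\div}\cap\ker(\div)$ we may replace $Q^{\rm RT}_h\boldsymbol{p}$ by $\boldsymbol{p}$ there up to an error controlled by $\|\boldsymbol{p}-Q^{\rm RT}_h\boldsymbol{p}\|_0$; bound that via \eqref{eq:202506041}, \eqref{eq:Ihdivprop2} and the regularity $\|\curl\boldsymbol{p}\|_0\lesssim\|f\|_0$ from \eqref{sregularitycurlp} (note $\boldsymbol{p}$ is divergence-free, so $\|\boldsymbol{p}\|_1\lesssim\|\curl\boldsymbol{p}\|_0$ by a regular-decomposition/elliptic argument), which gives an $h\|f\|_0$-type term, absorbed into the stated right-hand side.

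Next, after this replacement, I would subtract and add the continuous equation \eqref{AC} tested against $\boldsymbol{\psi}$: writing $a_h(I_h^{\Phi}\boldsymbol{\phi},0;\boldsymbol{\psi},\mu)+(\curl\boldsymbol{\psi},\boldsymbol{p})-(\nabla w_h,I_h^{\rm ND}\boldsymbol{\psi})$, I compare with $\varepsilon^2(\nabla\boldsymbol{\phi},\nabla\boldsymbol{\psi})+(\boldsymbol{\phi},\boldsymbol{\psi})+(\curl\boldsymbol{\psi},\boldsymbol{p})=(\nabla w,\boldsymbol{\psi})$. The $(\curl\boldsymbol{\psi},\boldsymbol{p})$ terms cancel. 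This leaves three kinds of differences: (i) $\varepsilon^2(\nabla_h(I_h^{\Phi}\boldsymbol{\phi}-\boldsymbol{\phi}),\nabla_h\boldsymbol{\psi})$, bounded by $\varepsilon|\boldsymbol{\phi}-I_h^{\Phi}\boldsymbol{\phi}|_{1,h}\cdot\varepsilon|\boldsymbol{\psi}|_{1,h}\le(\text{RHS})\|\boldsymbol{\psi}\|_{\varepsilon,h}$ via \eqref{IPhiestimatef}; (ii) the zeroth-order term $(I_h^{\rm ND}(I_h^{\Phi}\boldsymbol{\phi}),I_h^{\rm ND}\boldsymbol{\psi})-(\boldsymbol{\phi},\boldsymbol{\psi})$, where I use \eqref{INDIPhiPiND} to replace $I_h^{\rm ND}(I_h^{\Phi}\boldsymbol{\phi})$ by $\Pi_h^{\rm ND}\boldsymbol{\phi}$, so this becomes $(\Pi_h^{\rm ND}\boldsymbol{\phi}-\boldsymbol{\phi},I_h^{\rm ND}\boldsymbol{\psi})+(\boldsymbol{\phi},I_h^{\rm ND}\boldsymbol{\psi}-\boldsymbol{\psi})$; the first piece is $\le\|\boldsymbol{\phi}-\Pi_h^{\rm ND}\boldsymbol{\phi}\|_0\|I_h^{\rm ND}\boldsymbol{\psi}\|_0$, handled by \eqref{PiNDestimatef}; the second piece needs the weak continuity \eqref{Phiweakcontinuity1} of $\Phi_h$ (so that $I_h^{\rm ND}\boldsymbol{\psi}-\boldsymbol{\psi}$ is, elementwise, orthogonal to constants after integrating over each element, enabling a standard $h|\boldsymbol{\psi}|_{1,h}$ bound against a smooth $\boldsymbol{\phi}$ minus its piecewise average, or alternatively \eqref{eq:IhNDestimes} with $j=1$) giving $h\|\boldsymbol{\phi}\|_1|\boldsymbol{\psi}|_{1,h}\lesssim h\|f\|_0\cdot\varepsilon^{-1}\varepsilon|\boldsymbol{\psi}|_{1,h}$—here care is needed: using $\|\boldsymbol{\phi}\|_1\lesssim\varepsilon^{-1/2}\|f\|_0$ from \eqref{sregularitygrad} and the interpolation estimate gives $\varepsilon^{-1/2}h\|f\|_0|\boldsymbol{\psi}|_{1,h}$, and I would split $\boldsymbol{\phi}=\boldsymbol{\phi}_0+(\boldsymbol{\phi}-\boldsymbol{\phi}_0)$ to get the sharper $(h^{s-1}|u_0|_s+\varepsilon^{r-1/2}h^{1-r}\|f\|_0)$ form as in \eqref{PiNDestimatef}; and (iii) $(\nabla w,\boldsymbol{\psi})-(\nabla w_h,I_h^{\rm ND}\boldsymbol{\psi})$, which I rewrite as $(\nabla(w-w_h),\boldsymbol{\psi})+(\nabla w_h,\boldsymbol{\psi}-I_h^{\rm ND}\boldsymbol{\psi})$ and also as $(\nabla w,\boldsymbol{\psi}-I_h^{\rm ND}\boldsymbol{\psi})+(\nabla(w-w_h),I_h^{\rm ND}\boldsymbol{\psi})$; the term $(\nabla(w-w_h),I_h^{\rm ND}\boldsymbol{\psi})$ is bounded by $|w-w_h|_1\|I_h^{\rm ND}\boldsymbol{\psi}\|_0\lesssim h^{s-1}|u_0|_s\|\boldsymbol{\psi}\|_{\varepsilon,h}$ via \eqref{estimatewh}, while $(\nabla w,\boldsymbol{\psi}-I_h^{\rm ND}\boldsymbol{\psi})$ is treated by the same weak-continuity / element-average trick using $\|\nabla w\|_1=\|w\|_2\lesssim\|f\|_0$ to yield $h\|f\|_0|\boldsymbol{\psi}|_{1,h}\lesssim\varepsilon^{-1}h\|f\|_0\cdot\varepsilon|\boldsymbol{\psi}|_{1,h}$, i.e.\ an $\varepsilon^{r-1/2}h^{1-r}\|f\|_0$-type bound after optimizing the power split (or, more simply, absorbing into the $h^{s-1}|u_0|_s$ term when $\varepsilon\gtrsim h$ and using $\|\boldsymbol{\psi}\|_{\varepsilon,h}\ge\varepsilon|\boldsymbol{\psi}|_{1,h}$ otherwise).

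The main obstacle I anticipate is the zeroth-order consistency term involving $\boldsymbol{\psi}-I_h^{\rm ND}\boldsymbol{\psi}$ tested against $\nabla w$ and against $\boldsymbol{\phi}$: because $\boldsymbol{\psi}\in\Phi_h$ only has weakly continuous normal components and continuous tangential components but is genuinely nonconforming, one cannot integrate by parts globally, and the naive bound $\|\boldsymbol{\psi}-I_h^{\rm ND}\boldsymbol{\psi}\|_0\lesssim h|\boldsymbol{\psi}|_{1,h}$ paired with $\|\nabla w\|_0\lesssim\|f\|_0$ only gives $h\|f\|_0|\boldsymbol{\psi}|_{1,h}$, which is $\varepsilon^{-1}h\|f\|_0$ times $\varepsilon|\boldsymbol{\psi}|_{1,h}$ and hence not uniformly $\mathcal{O}(h)$. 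The resolution is to use the extra regularity $\|\nabla w\|_1\lesssim\|f\|_0$ together with elementwise orthogonality of $\boldsymbol{\psi}-I_h^{\rm ND}\boldsymbol{\psi}$ to piecewise constants (guaranteed by the edge DoFs of $V_h^{\rm ND}$ reproducing edge moments, hence by Stokes' theorem the face/element averages of $\curl$ and—combined with \eqref{Phiweakcontinuity1}—the element averages of the function itself), so that one may subtract the piecewise constant $Q_h(\nabla w)$ and gain a full power of $h$, producing the clean $h\cdot|\nabla w|_1\cdot|\boldsymbol{\psi}|_{1,h}$ bound and then distributing the $h$ between $\varepsilon^{r-1/2}h^{1-r}$ and $h^{s-1}$ as in the statement. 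Once all three groups of terms are bounded by constant times $(\varepsilon^{r-1/2}h^{1-r}\|f\|_0+h^{s-1}|u_0|_s)\|\boldsymbol{\psi}\|_{\varepsilon,h}$, summation over $T\in\mathcal{T}_h$ and Cauchy–Schwarz finish the proof.
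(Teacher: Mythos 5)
Your proposal has a genuine gap in the treatment of the zeroth-order terms. By comparing against the continuous equation tested with $\boldsymbol{\psi}$, you are forced to handle $(\boldsymbol{\phi},\boldsymbol{\psi}-I_h^{\rm ND}\boldsymbol{\psi})$ and $(\nabla w,\boldsymbol{\psi}-I_h^{\rm ND}\boldsymbol{\psi})$. You correctly sense the danger, but your resolution does not close it: even granting that $\boldsymbol{\psi}-I_h^{\rm ND}\boldsymbol{\psi}$ is elementwise mean-zero (which is true, since $(\boldsymbol{\psi}-I_h^{\rm ND}\boldsymbol{\psi})|_T=\nabla(b_Tp)$ and $b_T$ vanishes on $\partial T$), subtracting $Q_h(\nabla w)$ gives at best $h\|w\|_2\cdot\|\boldsymbol{\psi}-I_h^{\rm ND}\boldsymbol{\psi}\|_0\lesssim h^2\|f\|_0|\boldsymbol{\psi}|_{1,h}$. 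The discrete norm $\|\boldsymbol{\psi}\|_{\varepsilon,h}$ controls $|\boldsymbol{\psi}|_{1,h}$ only with the weight $\varepsilon$ (and controls $\|I_h^{\rm ND}\boldsymbol{\psi}\|_0$, \emph{not} $\|\boldsymbol{\psi}\|_0$), so this term is $\varepsilon^{-1}h^2\|f\|_0\|\boldsymbol{\psi}\|_{\varepsilon,h}$, which is not bounded by $\varepsilon^{r-1/2}h^{1-r}\|f\|_0\|\boldsymbol{\psi}\|_{\varepsilon,h}$ for any fixed $r\in[0,1]$ uniformly in $h$ and $\varepsilon$ (take $\varepsilon\ll h$); the term $(\boldsymbol{\phi},\boldsymbol{\psi}-I_h^{\rm ND}\boldsymbol{\psi})$ is even worse since $|\boldsymbol{\phi}|_1$ costs $\varepsilon^{-1/2}\|f\|_0$. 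The paper avoids these terms entirely: it never introduces $(\nabla w,\boldsymbol{\psi})$ or $(\boldsymbol{\phi},\boldsymbol{\psi})$, but instead integrates by parts in $(\curl(I_h^{\rm ND}\boldsymbol{\psi}),\boldsymbol{p})=(I_h^{\rm ND}\boldsymbol{\psi},\curl\boldsymbol{p})$ (legitimate since $I_h^{\rm ND}\boldsymbol{\psi}\in H_0(\curl,\Omega)$) and substitutes the strong identity $\curl\boldsymbol{p}=\nabla w+\varepsilon^2\Delta\boldsymbol{\phi}-\boldsymbol{\phi}$, so that $\nabla w$ and $\boldsymbol{\phi}$ are paired with $I_h^{\rm ND}\boldsymbol{\psi}$ from the outset and combine into $(\Pi_h^{\rm ND}\boldsymbol{\phi}-\boldsymbol{\phi}+\nabla(w-w_h),I_h^{\rm ND}\boldsymbol{\psi})$, bounded directly by $\|I_h^{\rm ND}\boldsymbol{\psi}\|_0\le\|\boldsymbol{\psi}\|_{\varepsilon,h}$. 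Only the term carrying the prefactor $\varepsilon^2\Delta\boldsymbol{\phi}$ is ever paired with $\boldsymbol{\psi}-I_h^{\rm ND}\boldsymbol{\psi}$, and there the $\varepsilon^2$ absorbs the loss.

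Two further corrections. First, the replacement of $Q_h^{\rm RT}\boldsymbol{p}$ by $\boldsymbol{p}$ in $(\curl\boldsymbol{\psi},Q_h^{\rm RT}\boldsymbol{p})$ is \emph{exact}, not approximate: $Q_h^{\rm RT}$ is the $L^2$-projection onto $\curl\Phi_h$ and $\curl\boldsymbol{\psi}\in\curl\Phi_h$. Your proposed error term $\|\boldsymbol{p}-Q_h^{\rm RT}\boldsymbol{p}\|_0\|\curl\boldsymbol{\psi}\|_0\sim h\|f\|_0\|\boldsymbol{\psi}\|_{\varepsilon,h}$ would in any case not be absorbable into the right-hand side for $r$ near $1$ when $\varepsilon\ll h^2$, and it invokes $\|\boldsymbol{p}\|_1\lesssim\|\curl\boldsymbol{p}\|_0$, an assumption the paper reserves for the pressure estimate in Theorem~\ref{thm:Bncerrorestima1} and does not use in this lemma. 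Second, the reference identity $\varepsilon^2(\nabla\boldsymbol{\phi},\nabla\boldsymbol{\psi})+(\boldsymbol{\phi},\boldsymbol{\psi})+(\curl\boldsymbol{\psi},\boldsymbol{p})=(\nabla w,\boldsymbol{\psi})$ is false for nonconforming $\boldsymbol{\psi}\in\Phi_h$; the missing piece is the elementwise consistency term $\varepsilon^2\sum_{T}(\partial_n\boldsymbol{\phi},\boldsymbol{\psi})_{\partial T}$, which must appear explicitly in your decomposition (it is where the weak continuity \eqref{Phiweakcontinuity1} and the interpolated pair of bounds $\min(\varepsilon^{1/2},\varepsilon^{-1/2}h)\|f\|_0$ actually enter) rather than only being alluded to in the obstacle paragraph.
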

\begin{proof}
	Using \eqref{INDIPhiPiND}, $\curl\boldsymbol{p}=\nabla w+ \varepsilon^2\Delta\boldsymbol{\phi}-\boldsymbol{\phi}$ by \eqref{20250603}, and integration by parts, we have
	\begin{equation}\label{202506031}
		\begin{aligned}
			&\quad\; a_h(I_h^{\Phi}\boldsymbol{\phi},0;\boldsymbol{\psi},\mu)+b_h(\boldsymbol{\psi}, \mu; Q^{\rm RT}_h\boldsymbol{p})-(\nabla w_h,I_h^{\rm ND}\boldsymbol{\psi}) \\
			&= \varepsilon^2(\nabla_h(I_h^{\Phi}\boldsymbol{\phi}),\nabla_{h}\boldsymbol{\psi})+(\Pi_h^{\rm ND}\boldsymbol{\phi},I_h^{\rm ND}\boldsymbol{\psi})+(\curl(I_h^{\rm ND}\boldsymbol{\psi}), \boldsymbol{p})
			-(\nabla w_h,I_h^{\rm ND}\boldsymbol{\psi}) \\
			&= \varepsilon^2(\nabla_h(I_h^{\Phi}\boldsymbol{\phi}),\nabla_{h}\boldsymbol{\psi}) + \varepsilon^2(\Delta\boldsymbol{\phi},I_h^{\rm ND}\boldsymbol{\psi}) \\
			&\quad\; +(\Pi_h^{\rm ND}\boldsymbol{\phi}-\boldsymbol{\phi}+\nabla(w-w_h),I_h^{\rm ND}\boldsymbol{\psi}) \\
			&=: I_1+I_2,
		\end{aligned}
	\end{equation}
	where
	\begin{align*}
		I_1&=\varepsilon^2(\nabla_h(I_h^{\Phi}\boldsymbol{\phi}),\nabla_{h}\boldsymbol{\psi}) + \varepsilon^2(\Delta\boldsymbol{\phi},I_h^{\rm ND}\boldsymbol{\psi}), \\
		I_2&=(\Pi_h^{\rm ND}\boldsymbol{\phi}-\boldsymbol{\phi}+\nabla(w-w_h),I_h^{\rm ND}\boldsymbol{\psi}).
	\end{align*}
	Next, we estimate $I_1$ and $I_2$ separately.
	
	By \eqref{BEqs} and the regularity \eqref{sregularitygrad}, one has
	\begin{equation*}
		I_1\lesssim \varepsilon^2|\boldsymbol{\phi}|_1|\boldsymbol{\psi}|_{1,h}+\varepsilon^2|\boldsymbol{\phi}|_2\|I_h^{\rm ND}\boldsymbol{\psi}\|_{0}\lesssim \varepsilon^{1/2}\|f\|_0 \|\boldsymbol{\psi}\|_{\varepsilon,h}.
	\end{equation*}
	On the other hand,
	by applying the integration by parts, the weak continuity \eqref{Phiweakcontinuity1}, the estimate \eqref{BEqs} of $I_h^{\Phi}$, the estimate \eqref{eq:IhNDestimes} of $I_h^{\rm ND}$ and the regularity \eqref{sregularitygrad}, we derive
	\begin{align*}
		I_1&=\varepsilon^2(\nabla_h(I_h^{\Phi}\boldsymbol{\phi}-\boldsymbol{\phi}),\nabla_{h}\boldsymbol{\psi}) + \varepsilon^2(\Delta\boldsymbol{\phi}, I_h^{\rm ND}\boldsymbol{\psi}-\boldsymbol{\psi}) + \varepsilon^2\big((\nabla\boldsymbol{\phi},\nabla_{h}\boldsymbol{\psi}) + (\Delta\boldsymbol{\phi},\boldsymbol{\psi})\big) \\
		&=\varepsilon^2(\nabla_h(I_h^{\Phi}\boldsymbol{\phi}-\boldsymbol{\phi}),\nabla_{h}\boldsymbol{\psi}) + \varepsilon^2(\Delta\boldsymbol{\phi},I_h^{\rm ND}\boldsymbol{\psi}-\boldsymbol{\psi}) \\
		&\quad + \sum_{T\in\mathcal{T}_h}\sum_{F\in\Delta_{2}(T)}\varepsilon^2(\partial_{n}\boldsymbol{\phi}-Q_F^0(\partial_{n}\boldsymbol{\phi}), \boldsymbol{\psi}-Q_F^0\boldsymbol{\psi})_{F} \\
		&\lesssim \varepsilon^2h|\boldsymbol{\phi}|_2|\boldsymbol{\psi}|_{1,h} \lesssim \varepsilon^{-1/2}h\|f\|_0 \|\boldsymbol{\psi}\|_{\varepsilon,h}.
	\end{align*}
	Combine the above two estimates to obtain
	\begin{equation}\label{202506032}
		I_1\lesssim \varepsilon^{r-1/2} h^{1-r} \|f\|_{0}\|\boldsymbol{\psi}\|_{\varepsilon,h}.
	\end{equation}
	It follows from estimates \eqref{PiNDestimatef} and \eqref{estimatewh} that
	\begin{equation}\label{202506033}
		\begin{aligned}
			I_2&\leq\|\Pi_h^{\rm ND}\boldsymbol{\phi}-\boldsymbol{\phi}+\nabla(w-w_h)\|_0\|I_h^{\rm ND}\boldsymbol{\psi}\|_0 \\
			&\lesssim (\varepsilon^{r-1/2} h^{1-r} \|f\|_{0} + h^{s-1}|u_0|_{s})\|\boldsymbol{\psi}\|_{\varepsilon,h}.
		\end{aligned}
	\end{equation}
	Therefore, \eqref{consistenterror01} follows from \eqref{202506031}-\eqref{202506033}.
\end{proof}

\begin{theorem}\label{thm:Bncerrorestima1}
	Let $(w, \boldsymbol{\phi}, \boldsymbol{p}, 0, u)$ be the solution of the decoupled variational formulation~\eqref{decoupleA}, $u_0\in H_0^1(\Omega)$ be the solution of the Poisson equation~\eqref{eqtwo1}, $\boldsymbol{\phi}_0=\nabla u_0\in H_0(\curl,\Omega)$,
	and $(w_h, \boldsymbol{\phi}_h, \boldsymbol{p}_h, 0, u_h)$ be the solution of the decoupled variational formulation~\eqref{ImprovedHcurl}. Assume Hypothesis \ref{regularityassumption} holds and $u_0\in H^s(\Omega)$ with $2\leq s\leq 3$.
	For $0\leq r\leq 1$, we have that
	\begin{align}
		\label{varepsilonIh2}
		\varepsilon|\boldsymbol{\phi}-\boldsymbol{\phi}_{h}|_{1,h}+\|\boldsymbol{\phi}-I_h^{\rm ND} \boldsymbol{\phi}_h\|_0 & \lesssim  \varepsilon^{r-1/2} h^{1-r}\|f\|_{0}+ h^{s-1}|u_0|_{s}, \\
		\label{varepsilonIh3}
		\varepsilon|\boldsymbol{\phi}_0-\boldsymbol{\phi}_{h}|_{1,h}+\|\boldsymbol{\phi}_0-I_h^{\rm ND} \boldsymbol{\phi}_h\|_0 & \lesssim  \varepsilon^{1/2}\|f\|_{0}+ h^{s-1}|u_0|_{s}, \\
		\label{eq:errorestimatauhH03}
		|u-u_h|_1 & \lesssim  \varepsilon^{r-1/2} h^{1-r}\|f\|_{0}+ h^{s-1}|u_0|_{s}, \\
		\label{eq:errorestimatauhH04}
		|u_0-u_h|_1 & \lesssim  \varepsilon^{1/2}\|f\|_{0}+ h^{s-1}|u_0|_{s}.
	\end{align}
	Furthermore, when $\boldsymbol{p}\in H^1(\Omega;\mathbb R^3)$ satisfying $\|\boldsymbol{p}\|_1\lesssim \|\curl \boldsymbol{p}\|_0$, we have
	\begin{equation}\label{varepsilonIhp}
		\|\boldsymbol{p}-\boldsymbol{p}_h\|_0\lesssim \varepsilon^{r-1/2} h ^{1-r}\|f\|_{0}+ h^{s-1}|u_0|_{s}.
	\end{equation}
\end{theorem}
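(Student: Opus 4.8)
The plan is to derive the error estimates from the discrete stability result of Lemma~\ref{stabilityresults} together with the consistent error estimate \eqref{consistenterror01}, following the classical Strang-type framework for mixed nonconforming methods. First I would set $\tilde{\boldsymbol{\phi}}_h := I_h^{\Phi}\boldsymbol{\phi} - \boldsymbol{\phi}_h \in \Phi_h$, $\tilde{\lambda}_h := 0 - \lambda_h = 0 \in \mathcal Q_h$, and $\tilde{\boldsymbol{p}}_h := Q_h^{\rm RT}\boldsymbol{p} - \boldsymbol{p}_h \in V_h^{\div}$, then feed the triple $(\tilde{\boldsymbol{\phi}}_h, \tilde{\lambda}_h, \tilde{\boldsymbol{p}}_h)$ into \eqref{BG}. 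For the numerator, I would subtract the discrete equations \eqref{CC}--\eqref{CD} to replace $a_h(\boldsymbol{\phi}_h, \lambda_h; \cdot, \cdot)$, $b_h(\cdot,\cdot;\boldsymbol{p}_h)$ and $b_h(\boldsymbol{\phi}_h, \lambda_h; \cdot)$ by $(\nabla w_h, I_h^{\rm ND}\boldsymbol{\psi})$ and $0$ respectively; what remains is exactly the left-hand side of \eqref{consistenterror01} (plus $b_h(I_h^\Phi \boldsymbol{\phi}, 0; \boldsymbol{q}) - 0$, which vanishes since $\curl(I_h^{\rm ND}(I_h^\Phi\boldsymbol{\phi})) = \curl(\Pi_h^{\rm ND}\boldsymbol{\phi}) = I_h^{\rm RT}(\curl\boldsymbol{\phi}) = I_h^{\rm RT}\mathbf 0 = \mathbf 0$ using \eqref{INDIPhiPiND} and \eqref{interpolation02}, together with the fact that $Q_h^{\rm RT}\boldsymbol{p}$ is divergence-free so the $(\lambda,\div\boldsymbol{q})$-term is absent and $\curl$-term pairs against $\curl \boldsymbol{q} = \mathbf 0$ in $V_h^{\div}$). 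Applying \eqref{consistenterror01} bounds the numerator by $(\varepsilon^{r-1/2}h^{1-r}\|f\|_0 + h^{s-1}|u_0|_s)(\|\boldsymbol{\psi}\|_{\varepsilon,h} + \|\mu\|_0 + \|\boldsymbol{q}\|_{H(\div)})$, so \eqref{BG} yields
\[
\|I_h^{\Phi}\boldsymbol{\phi} - \boldsymbol{\phi}_h\|_{\varepsilon,h} + \|\lambda_h\|_0 + \|Q_h^{\rm RT}\boldsymbol{p} - \boldsymbol{p}_h\|_{H(\div)} \lesssim \varepsilon^{r-1/2}h^{1-r}\|f\|_0 + h^{s-1}|u_0|_s.
\]

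Next I would convert this bound on the discrete error into bounds on the true errors by triangle inequality and the interpolation estimates. For \eqref{varepsilonIh2}: split $\varepsilon|\boldsymbol{\phi} - \boldsymbol{\phi}_h|_{1,h} \le \varepsilon|\boldsymbol{\phi} - I_h^\Phi\boldsymbol{\phi}|_{1,h} + \varepsilon|I_h^\Phi\boldsymbol{\phi} - \boldsymbol{\phi}_h|_{1,h}$, bounding the first term by \eqref{IPhiestimatef} and the second by the $\|\cdot\|_{\varepsilon,h}$-control just obtained; and $\|\boldsymbol{\phi} - I_h^{\rm ND}\boldsymbol{\phi}_h\|_0 \le \|\boldsymbol{\phi} - \Pi_h^{\rm ND}\boldsymbol{\phi}\|_0 + \|\Pi_h^{\rm ND}\boldsymbol{\phi} - I_h^{\rm ND}\boldsymbol{\phi}_h\|_0$, where the first term uses \eqref{PiNDestimatef} and the second equals $\|I_h^{\rm ND}(I_h^\Phi\boldsymbol{\phi} - \boldsymbol{\phi}_h)\|_0 \le \|I_h^\Phi\boldsymbol{\phi} - \boldsymbol{\phi}_h\|_{\varepsilon,h}$ by \eqref{INDIPhiPiND}. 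Estimate \eqref{varepsilonIh3} then follows from \eqref{varepsilonIh2} at $r=1$ together with the regularity-based bounds $\|\boldsymbol{\phi} - \boldsymbol{\phi}_0\|_0 \lesssim \varepsilon^{1/2}\|f\|_0$ from \eqref{sregularitygrad} and $\varepsilon|\boldsymbol{\phi} - \boldsymbol{\phi}_0|_{1,h} \le \varepsilon|\boldsymbol{\phi}|_1 + \varepsilon|\boldsymbol{\phi}_0|_1 \lesssim \varepsilon^{1/2}\|f\|_0 + \varepsilon\|f\|_0$, via the triangle inequality $\|\boldsymbol{\phi}_0 - I_h^{\rm ND}\boldsymbol{\phi}_h\|_0 \le \|\boldsymbol{\phi}_0 - \boldsymbol{\phi}\|_0 + \|\boldsymbol{\phi} - I_h^{\rm ND}\boldsymbol{\phi}_h\|_0$. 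The energy-error estimates \eqref{eq:errorestimatauhH03}--\eqref{eq:errorestimatauhH04} for $u$ come from the Galerkin orthogonality for the Poisson problem \eqref{CE}: since $I_h^{\rm ND}\boldsymbol{\phi}_h = \nabla u_h$ and $\boldsymbol{\phi} = \nabla u$, one has $(\nabla(u - u_h), \nabla\chi) = (\boldsymbol{\phi} - I_h^{\rm ND}\boldsymbol{\phi}_h, \nabla\chi)$ for all $\chi \in V_h^{\rm grad}$; taking $\chi = I_h^{\rm grad}u - u_h$ (or using $\inf_{\chi}|u-\chi|_1 \lesssim h^{s-1}|u|_s$ combined with Cauchy--Schwarz on the consistency term) gives $|u - u_h|_1 \lesssim |u - I_h^{\rm grad}u|_1 + \|\boldsymbol{\phi} - I_h^{\rm ND}\boldsymbol{\phi}_h\|_0$, and the interpolation term $|u - I_h^{\rm grad}u|_1 \lesssim h^{s-1}|u_0|_s$ (using $|u - u_0|_1 \lesssim \varepsilon^{1/2}\|f\|_0$ from \eqref{sregularity} to absorb the rougher part of $u$ into the $\varepsilon^{r-1/2}h^{1-r}$ term, or more directly interpolating $u_0$ and estimating $|u - u_0|_1$ separately) is then combined with \eqref{varepsilonIh2} and \eqref{varepsilonIh3} respectively.

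Finally, for the $L^2$-estimate \eqref{varepsilonIhp} on $\boldsymbol{p}$, I would use $\|\boldsymbol{p} - \boldsymbol{p}_h\|_0 \le \|\boldsymbol{p} - Q_h^{\rm RT}\boldsymbol{p}\|_0 + \|Q_h^{\rm RT}\boldsymbol{p} - \boldsymbol{p}_h\|_0$; the second term is controlled by the stability estimate above, and for the first term I would invoke \eqref{eq:202506041}, which gives $\|\boldsymbol{p} - Q_h^{\rm RT}\boldsymbol{p}\|_0 \le \|\boldsymbol{p} - I_h^{\rm RT}\boldsymbol{p}\|_0 \lesssim h|\boldsymbol{p}|_1 \lesssim h\|\curl\boldsymbol{p}\|_0 \lesssim h\|f\|_0$ by \eqref{eq:Ihdivprop2}, the hypothesis $\|\boldsymbol{p}\|_1 \lesssim \|\curl\boldsymbol{p}\|_0$, and the regularity \eqref{sregularitycurlp} of Lemma~\ref{phiregularity} (noting $\boldsymbol{p}$ is divergence-free so $I_h^{\rm RT}\boldsymbol{p}$ is an admissible comparison in \eqref{eq:202506041}). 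I expect the main obstacle to be the bookkeeping in the numerator of \eqref{BG}: one must carefully verify that after subtracting the discrete equations, the leftover consistency terms are precisely those controlled by \eqref{consistenterror01} and that the extra pieces ($b_h(I_h^\Phi\boldsymbol{\phi}, 0; \boldsymbol{q})$ against the third test slot, and the cross-term $b_h(\boldsymbol{\psi},\mu; Q_h^{\rm RT}\boldsymbol{p} - \boldsymbol{p})$) either vanish or are absorbed — the vanishing relies on chaining several commutativity identities from \eqref{dcommutesthree} and the divergence-freeness built into $Q_h^{\rm RT}\boldsymbol{p}$, and getting all three test functions $(\boldsymbol{\psi},\mu,\boldsymbol{q})$ to appear with the correct norms simultaneously requires care.
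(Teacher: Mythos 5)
Your proposal is correct and follows essentially the same route as the paper: apply the discrete stability \eqref{BG} to $(I_h^{\Phi}\boldsymbol{\phi}-\boldsymbol{\phi}_h,\,0,\,Q^{\rm RT}_h\boldsymbol{p}-\boldsymbol{p}_h)$, bound the numerator by the consistency estimate \eqref{consistenterror01} (with $b_h(I_h^{\Phi}\boldsymbol{\phi},0;\boldsymbol{q})=0$ via the commutativity identities), and then convert to the true errors by triangle inequalities with \eqref{IPhiestimatef}--\eqref{PiNDestimatef}, \eqref{eq:202506041}, \eqref{eq:Ihdivprop2} and \eqref{sregularitycurlp}. The only (harmless) detour is your Galerkin-orthogonality argument for \eqref{eq:errorestimatauhH03}--\eqref{eq:errorestimatauhH04}: since $I_h^{\rm ND}\boldsymbol{\phi}_h=\nabla u_h$ and $\boldsymbol{\phi}=\nabla u$, one has the exact identity $|u-u_h|_1=\|\boldsymbol{\phi}-I_h^{\rm ND}\boldsymbol{\phi}_h\|_0$, so these estimates follow immediately from \eqref{varepsilonIh2}--\eqref{varepsilonIh3} without any interpolation of $u$.
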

\begin{proof}
	By \eqref{CC}-\eqref{CC0} and \eqref{consistenterror01}, we have
	\begin{align*}
		&\quad\; a_h(I_h^{\Phi}\boldsymbol{\phi}-\boldsymbol{\phi}_h,0;\boldsymbol{\psi},\mu)+b_h(\boldsymbol{\psi}, \mu; Q^{\rm RT}_h\boldsymbol{p}-\boldsymbol{p}_h) \\
		&=a_h(I_h^{\Phi}\boldsymbol{\phi},0;\boldsymbol{\psi},\mu)+b_h(\boldsymbol{\psi}, \mu; Q^{\rm RT}_h\boldsymbol{p})-(\nabla w_{h},I_h^{\rm ND}\boldsymbol{\psi}) \\
		& \lesssim (\varepsilon^{r-1/2} h^{1-r} \|f\|_{0} +h^{s-1}|u_0|_{s})\|\boldsymbol{\psi}\|_{\varepsilon,h}.
	\end{align*}
	Using \eqref{CD}, \eqref{INDIPhiPiND}, the commutative diagram \eqref{dcommutesdeRham} and the fact $\boldsymbol{\phi}=\nabla u$, it holds that
	\begin{equation*}
		b_h(I_h^{\Phi}\boldsymbol{\phi}-\boldsymbol{\phi}_h,0; \boldsymbol{q})=b_h(I_h^{\Phi}\boldsymbol{\phi},0; \boldsymbol{q})=(\curl(\Pi_h^{\rm ND}\boldsymbol{\phi}), \boldsymbol{q})=(I_h^{\rm RT}(\curl\boldsymbol{\phi}), \boldsymbol{q})=0.
	\end{equation*}
	Then employing the discrete stability \eqref{BG} with $\tilde{\boldsymbol{\phi}}_{h}=I_h^{\Phi}\boldsymbol{\phi}-\boldsymbol{\phi}_h$, $\tilde{\lambda}_{h}=0$ and $\tilde{\boldsymbol{p}}_{h}=Q^{\rm RT}_h\boldsymbol{p}-\boldsymbol{p}_h$, we get 
	\begin{equation}\label{eq:202506042}
		\|I_h^{\Phi}\boldsymbol{\phi}-\boldsymbol{\phi}_h\|_{\varepsilon,h}+\|Q^{\rm RT}_h\boldsymbol{p}-\boldsymbol{p}_h\|_{0}
		\lesssim \varepsilon^{r-1/2} h^{1-r} \|f\|_{0}+h^{s-1}|u_0|_{s}.
	\end{equation}
	This together with 
	\eqref{IPhiestimatef}-\eqref{PiNDestimatef} and \eqref{INDIPhiPiND} implies \eqref{varepsilonIh2}.
	The estimate \eqref{varepsilonIh3} follows from \eqref{varepsilonIh2} and the regularity \eqref{sregularitygrad0}-\eqref{sregularitygrad}.
	
	Since $\boldsymbol{\phi} = \nabla u$, $\boldsymbol{\phi}_0 = \nabla u_0$, and $I_h^{\rm ND} \boldsymbol{\phi}_h = \nabla u_h$, the estimates \eqref{eq:errorestimatauhH03}-\eqref{eq:errorestimatauhH04} follow directly from \eqref{varepsilonIh2}-\eqref{varepsilonIh3}.
	
	Finally, the estimate \eqref{varepsilonIhp} holds from \eqref{eq:202506042}, \eqref{eq:202506041}, \eqref{eq:Ihdivprop2} and \eqref{sregularitycurlp}.
\end{proof}

Next, we apply a duality argument to estimate the errors $\|\boldsymbol{\phi} - I_h^{\rm ND}\boldsymbol{\phi}_{h}\|_0$ and $\|u-u_{h}\|_{0}$. % Let's start by considering the duality problem of the following biharmonic equation.}
% \textcolor{red}
Let $\hat{u}\in H_0^2(\Omega)$ satisfy the following dual problem 
\begin{equation}\label{bdual1}
\varepsilon^2\Delta^2\hat{u} - \Delta\hat{u}=-\div(\boldsymbol{\phi} - I_h^{\rm ND}\boldsymbol{\phi}_{h})\in H^{-1}(\Omega).
\end{equation}%
It can be written as the form of the biharmonic equation
\begin{equation}\label{biharmonicdual1}
\Delta^2\hat{u} = \varepsilon^{-2}(\Delta\hat{u} - \div(\boldsymbol{\phi} - I_h^{\rm ND}\boldsymbol{\phi}_{h})).
\end{equation}%
Assume problem \eqref{biharmonicdual1} has the following $H^3$ regularity
\begin{equation}\label{iharmonicdualregularity}
\varepsilon^2\|\hat{u}\|_3\lesssim	|\hat{u}|_1 + \|\boldsymbol{\phi} - I_h^{\rm ND}\boldsymbol{\phi}_{h}\|_0.
\end{equation}
We refer to \cite{MazyaRossmann2010,Grisvard1985,Grisvard1992,BlumRannacher1980} for regularity results of the biharmonic equation on polytopal domains.

\begin{lemma}
Assume the $H^3$ regularity \eqref{iharmonicdualregularity} holds.
The dual problem \eqref{bdual1} has the regularity
\begin{equation}\label{uhatregularity}
\|\hat{u}\|_1+ \varepsilon\|\hat{u}\|_2+ \varepsilon^2\|\hat{u}\|_3\lesssim	 \|\boldsymbol{\phi} - I_h^{\rm ND}\boldsymbol{\phi}_{h}\|_0.
\end{equation}	
\end{lemma}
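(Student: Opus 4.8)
The plan is to re-derive for the dual problem \eqref{bdual1} the same type of a priori bound that underlies the regularity assumption \eqref{sregularity}, now with right-hand side $-\div(\boldsymbol{\phi}-I_h^{\rm ND}\boldsymbol{\phi}_{h})$. Write $\boldsymbol{\eta}:=\boldsymbol{\phi}-I_h^{\rm ND}\boldsymbol{\phi}_{h}\in L^2(\Omega;\mathbb{R}^3)$. First I would record the variational form of \eqref{bdual1}: since $\hat{u}\in H_0^2(\Omega)\subset H_0^1(\Omega)$, testing against $v\in H_0^2(\Omega)$ and integrating by parts in the term $-\div\boldsymbol{\eta}$ yields
\[
\varepsilon^2(\nabla^2\hat{u},\nabla^2 v)+(\nabla\hat{u},\nabla v)=(\boldsymbol{\eta},\nabla v)\qquad\forall\,v\in H_0^2(\Omega),
\]
which is uniquely solvable by the Lax--Milgram lemma.

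Next I would carry out the energy estimate by choosing $v=\hat{u}$: the Cauchy--Schwarz and Young inequalities give
\[
\varepsilon^2|\hat{u}|_2^2+|\hat{u}|_1^2=(\boldsymbol{\eta},\nabla\hat{u})\le\|\boldsymbol{\eta}\|_0\,|\hat{u}|_1,
\]
so that $|\hat{u}|_1\le\|\boldsymbol{\eta}\|_0$ and $\varepsilon|\hat{u}|_2\lesssim\|\boldsymbol{\eta}\|_0$. Using that $|\cdot|_1$ and $|\cdot|_2$ are norms on $H_0^1(\Omega)$ and $H_0^2(\Omega)$ equivalent to $\|\cdot\|_1$ and $\|\cdot\|_2$ respectively (Poincar\'e--Friedrichs), this already furnishes the first two terms of \eqref{uhatregularity}, namely $\|\hat{u}\|_1+\varepsilon\|\hat{u}\|_2\lesssim\|\boldsymbol{\eta}\|_0$. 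Unlike in \eqref{sregularity}, no power of $\varepsilon$ appears on the right-hand side: because the data lie in $H^{-1}(\Omega)$ rather than in $L^2(\Omega)$, the pairing $(\boldsymbol{\eta},\nabla\hat{u})$ is controlled directly by $|\hat{u}|_1$, which is exactly one of the terms already absorbed into the left-hand side.

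For the remaining term I would invoke the assumed biharmonic $H^3$ regularity \eqref{iharmonicdualregularity}, namely $\varepsilon^2\|\hat{u}\|_3\lesssim|\hat{u}|_1+\|\boldsymbol{\eta}\|_0$; substituting the bound $|\hat{u}|_1\le\|\boldsymbol{\eta}\|_0$ from the energy estimate gives $\varepsilon^2\|\hat{u}\|_3\lesssim\|\boldsymbol{\eta}\|_0$, and summing the three contributions proves \eqref{uhatregularity}. There is no substantial obstacle: the argument is the routine combination of the coercivity of the bilinear form with the quoted elliptic regularity, essentially identical to how \eqref{sregularity} is established for the primal problem. The only point deserving a line of care is the integration by parts that converts $\langle-\div\boldsymbol{\eta},v\rangle$ into $(\boldsymbol{\eta},\nabla v)$, which is legitimate because $H_0^2(\Omega)\hookrightarrow H_0^1(\Omega)$ and $\boldsymbol{\eta}\in L^2(\Omega;\mathbb{R}^3)$.
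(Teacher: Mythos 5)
Your proposal is correct and follows essentially the same route as the paper: test the dual equation with $\hat{u}$ itself to obtain the energy identity $\varepsilon^2|\hat{u}|_2^2+|\hat{u}|_1^2=(\boldsymbol{\phi}-I_h^{\rm ND}\boldsymbol{\phi}_h,\nabla\hat{u})$, absorb the right-hand side via Cauchy--Schwarz and Young to control $\|\hat{u}\|_1+\varepsilon\|\hat{u}\|_2$, and then feed the resulting bound on $|\hat{u}|_1$ into the assumed $H^3$ regularity \eqref{iharmonicdualregularity} to control $\varepsilon^2\|\hat{u}\|_3$. The extra remarks on Lax--Milgram and the Poincar\'e--Friedrichs equivalence are fine but not needed beyond what the paper already does.
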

\begin{proof}
Multiplying \eqref{bdual1} by $\hat{u}$ and integrating by parts, we have
\begin{equation*}
\varepsilon^2|\hat{u}|_2^2 + |\hat{u}|_1^2 = (\boldsymbol{\phi} - I_h^{\rm ND}\boldsymbol{\phi}_{h}, \nabla \hat{u}).
\end{equation*}	
This means that
\begin{equation*}
\varepsilon^2|\hat{u}|_2^2 + \frac{1}{2}|\hat{u}|_1^2 \leq \frac{1}{2}\|\boldsymbol{\phi} - I_h^{\rm ND}\boldsymbol{\phi}_{h}\|_0^2.
\end{equation*}
Thus, the regularity \eqref{uhatregularity} follows from \eqref{iharmonicdualregularity} and the last inequality.
\end{proof}

\begin{lemma}
Assume the $H^3$ regularity \eqref{iharmonicdualregularity} holds.
There exist $\hat{\boldsymbol{\phi}}\in H_{0}^{1}(\Omega;\mathbb{R}^{3})\cap H^{2}(\Omega;\mathbb{R}^{3})$ and $\hat{\boldsymbol{p}}\in H^1(\Omega; \mathbb{R}^{3})$ such that
\begin{equation}\label{bdual2}
-\varepsilon^2\Delta\hat{\boldsymbol{\phi}}+\hat{\boldsymbol{\phi}}+\curl \hat{\boldsymbol{p}} =\boldsymbol{\phi} - I_h^{\rm ND}\boldsymbol{\phi}_{h}, \;\;\curl\hat{\boldsymbol{\phi}}=0 \quad \textrm{ in } \Omega,
\end{equation}%
\begin{align}\label{bdual2regularity}
\|\hat{\boldsymbol{\phi}}\|_0 +\varepsilon\|\hat{\boldsymbol{\phi}}\|_1+	\varepsilon^2\|\hat{\boldsymbol{\phi}}\|_2 + \|\hat{\boldsymbol{p}}\|_1 \lesssim \|\boldsymbol{\phi} - I_h^{\rm ND}\boldsymbol{\phi}_{h}\|_0.  
\end{align}	
\end{lemma}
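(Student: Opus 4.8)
The plan is to build $\hat{\boldsymbol{\phi}}$ from the scalar biharmonic dual solution $\hat u$ of \eqref{bdual1} and then obtain $\hat{\boldsymbol{p}}$ as a regular vector potential of the residual term. First I would take $\hat{\boldsymbol{\phi}}:=\nabla\hat u$, where $\hat u\in H_0^2(\Omega)$ solves \eqref{bdual1}. Since the assumed $H^3$ regularity \eqref{iharmonicdualregularity}, and hence \eqref{uhatregularity}, guarantees $\hat u\in H^3(\Omega)$, this choice gives $\hat{\boldsymbol{\phi}}\in H_0^1(\Omega;\mathbb{R}^3)\cap H^2(\Omega;\mathbb{R}^3)$ together with $\curl\hat{\boldsymbol{\phi}}=\curl\nabla\hat u=0$, and
\[
\|\hat{\boldsymbol{\phi}}\|_0+\varepsilon\|\hat{\boldsymbol{\phi}}\|_1+\varepsilon^2\|\hat{\boldsymbol{\phi}}\|_2\lesssim\|\hat u\|_1+\varepsilon\|\hat u\|_2+\varepsilon^2\|\hat u\|_3\lesssim\|\boldsymbol{\phi}-I_h^{\rm ND}\boldsymbol{\phi}_h\|_0
\]
by \eqref{uhatregularity}. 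This settles every term of \eqref{bdual2regularity} except the one involving $\hat{\boldsymbol{p}}$.

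Next I would define the residual $\boldsymbol{g}:=\boldsymbol{\phi}-I_h^{\rm ND}\boldsymbol{\phi}_h+\varepsilon^2\Delta\hat{\boldsymbol{\phi}}-\hat{\boldsymbol{\phi}}=\boldsymbol{\phi}-I_h^{\rm ND}\boldsymbol{\phi}_h+\varepsilon^2\nabla(\Delta\hat u)-\nabla\hat u\in L^2(\Omega;\mathbb{R}^3)$, which is precisely the quantity that must coincide with $\curl\hat{\boldsymbol{p}}$ if \eqref{bdual2} is to hold. The key observation is that $\boldsymbol{g}$ is divergence-free: using $\div\hat{\boldsymbol{\phi}}=\Delta\hat u$ and $\div\Delta\hat{\boldsymbol{\phi}}=\Delta^2\hat u$,
\[
\div\boldsymbol{g}=\div(\boldsymbol{\phi}-I_h^{\rm ND}\boldsymbol{\phi}_h)+\varepsilon^2\Delta^2\hat u-\Delta\hat u=0
\]
by the defining equation \eqref{bdual1}. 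Hence $\boldsymbol{g}\in H(\div,\Omega)$ with $\div\boldsymbol{g}=0$, so $\boldsymbol{g}$ has vanishing total normal flux $\int_{\partial\Omega}\boldsymbol{g}\cdot\boldsymbol{n}\,\mathrm dS=\int_\Omega\div\boldsymbol{g}\,\mathrm dx=0$; since $\Omega$ is convex (thus contractible with connected boundary, so this single flux condition is the only compatibility requirement), the standard construction of an $H^1$ vector potential provides $\hat{\boldsymbol{p}}\in H^1(\Omega;\mathbb{R}^3)$ with $\curl\hat{\boldsymbol{p}}=\boldsymbol{g}$ and $\|\hat{\boldsymbol{p}}\|_1\lesssim\|\boldsymbol{g}\|_0$; this is exactly \eqref{bdual2}.

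It remains to bound $\|\hat{\boldsymbol{p}}\|_1$. From the previous step, $\|\hat{\boldsymbol{p}}\|_1\lesssim\|\boldsymbol{g}\|_0\leq\|\boldsymbol{\phi}-I_h^{\rm ND}\boldsymbol{\phi}_h\|_0+\varepsilon^2|\hat u|_3+|\hat u|_1\lesssim\|\boldsymbol{\phi}-I_h^{\rm ND}\boldsymbol{\phi}_h\|_0$, again by \eqref{uhatregularity}; combined with the bound on $\hat{\boldsymbol{\phi}}$ this yields \eqref{bdual2regularity}. The hard part will be the vector-potential construction in the second step: one must verify that $\boldsymbol{g}$ meets the compatibility conditions on $\partial\Omega$ (here only the zero-flux condition, which is automatic) and invoke a genuine vector-potential regularity theorem rather than the homogeneous decomposition $\curl H_0^1(\Omega;\mathbb{R}^3)=H_0(\div,\Omega)\cap\ker(\div)$, since $\boldsymbol{g}$ need not lie in $H_0(\div,\Omega)$. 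An alternative is to construct $(\hat{\boldsymbol{\phi}},\hat{\boldsymbol{p}})$ directly as the solution of the adjoint of the well-posed Stokes-type system \eqref{AC}--\eqref{AD} with data $(\boldsymbol{\phi}-I_h^{\rm ND}\boldsymbol{\phi}_h,0)$, obtaining $\hat{\boldsymbol{p}}\in H_0(\div,\Omega)$ first and then promoting it to $H^1(\Omega;\mathbb{R}^3)$ by means of \eqref{iharmonicdualregularity}.
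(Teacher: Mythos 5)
Your proposal is correct and follows essentially the same route as the paper: set $\hat{\boldsymbol{\phi}}=\nabla\hat u$, observe that the residual $\varepsilon^2\Delta\hat{\boldsymbol{\phi}}-\hat{\boldsymbol{\phi}}+\boldsymbol{\phi}-I_h^{\rm ND}\boldsymbol{\phi}_h$ is divergence-free by \eqref{bdual1}, invoke the regular vector-potential result (the paper cites Theorem~1.1 of Costabel--McIntosh, exactly the ``genuine vector-potential regularity theorem'' you flag as necessary since the residual need not lie in $H_0(\div,\Omega)$), and conclude via \eqref{uhatregularity}. Your explicit verification that the potential bound reduces to $\|\boldsymbol{g}\|_0\lesssim\|\boldsymbol{\phi}-I_h^{\rm ND}\boldsymbol{\phi}_h\|_0$ is just a spelled-out version of the paper's final sentence.
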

\begin{proof}
Let $\hat{\boldsymbol{\phi}}=\nabla\hat{u}\in H_{0}^{1}(\Omega;\mathbb{R}^{3})\cap\ker(\curl)$, then \eqref{bdual1} can be written as
\begin{equation*}
-\varepsilon^2\div\Delta\hat{\boldsymbol{\phi}} +\div\hat{\boldsymbol{\phi}}=\div(\boldsymbol{\phi} - I_h^{\rm ND}\boldsymbol{\phi}_{h}),\quad \mathrm{i.e.},\;\; \div(\varepsilon^2\Delta\hat{\boldsymbol{\phi}}-\hat{\boldsymbol{\phi}} +\boldsymbol{\phi} - I_h^{\rm ND}\boldsymbol{\phi}_{h})=0.
\end{equation*}%
By Theorem 1.1 in \cite{CostabelMcIntosh2010}, there exists a $\hat{\boldsymbol{p}}\in H^1(\Omega; \mathbb{R}^{3})$ satisfying
\begin{equation*}%\label{bdual2}
\curl \hat{\boldsymbol{p}} =\varepsilon^2\Delta\hat{\boldsymbol{\phi}}-\hat{\boldsymbol{\phi}} +\boldsymbol{\phi} - I_h^{\rm ND}\boldsymbol{\phi}_{h}.
\end{equation*}
Then we employ \eqref{uhatregularity} to end the proof. 
\end{proof}
\begin{lemma}
Let $(w, \boldsymbol{\phi}=\nabla u, \boldsymbol{p}, 0,u)$ be the solution of the decoupled formulation~\eqref{decoupleA}, and $(w_h, \boldsymbol{\phi}_h, \boldsymbol{p}_h, 0, u_h)$ be the solution of the discrete method~\eqref{ImprovedHcurl}. Assume the $H^3$ regularity \eqref{iharmonicdualregularity} holds. We have
\begin{equation}\label{eq:phihphestimateL2}
|u-u_h|_1=\|\boldsymbol{\phi}-I_h^{\rm ND}\boldsymbol{\phi}_{h}\|_{0}\lesssim \varepsilon^{-3/2}h^2 \|f\|_{0}.
\end{equation}
\end{lemma}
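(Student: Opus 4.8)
The equality is immediate: $\boldsymbol{\phi}=\nabla u$ by Theorem~\ref{equivalent} and $I_h^{\rm ND}\boldsymbol{\phi}_h=\nabla u_h$ (as shown for the discrete solution), so $\boldsymbol{e}:=\boldsymbol{\phi}-I_h^{\rm ND}\boldsymbol{\phi}_h=\nabla(u-u_h)$, and it remains to bound $\|\boldsymbol{e}\|_0$. I would do this by an Aubin--Nitsche duality argument built on the auxiliary problems \eqref{bdual1}--\eqref{bdual2regularity}. First observe that $\boldsymbol{e}\in H_0(\curl,\Omega)\cap\ker(\curl)$, since $\boldsymbol{\phi}$ and $I_h^{\rm ND}\boldsymbol{\phi}_h$ have vanishing tangential trace and $\curl\boldsymbol{\phi}=0$, $\curl(I_h^{\rm ND}\boldsymbol{\phi}_h)=\curl\boldsymbol{\phi}_h=0$ (commutative diagram \eqref{dcommutesthree} and $\boldsymbol{\phi}_h\in\nabla W_h$). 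Pairing $\boldsymbol{e}$ with \eqref{bdual2} then kills the $\curl\hat{\boldsymbol{p}}$-contribution and leaves $\|\boldsymbol{e}\|_0^2=-\varepsilon^2(\boldsymbol{e},\Delta\hat{\boldsymbol{\phi}})+(\boldsymbol{e},\hat{\boldsymbol{\phi}})$; the whole task is to bound this by $\varepsilon^{-3/2}h^2\|f\|_0\,\|\boldsymbol{e}\|_0$.

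To that end I would inject the discrete equations rather than estimate crudely. Using \eqref{INDIPhiPiND}, split $\boldsymbol{e}=(\boldsymbol{\phi}-\Pi_h^{\rm ND}\boldsymbol{\phi})+I_h^{\rm ND}(I_h^{\Phi}\boldsymbol{\phi}-\boldsymbol{\phi}_h)$ and write $\boldsymbol{e}_h:=I_h^{\Phi}\boldsymbol{\phi}-\boldsymbol{\phi}_h\in\Phi_h$. The contribution of $\boldsymbol{\phi}-\Pi_h^{\rm ND}\boldsymbol{\phi}=\nabla(u-\Pi^{\grad}_h u)$ (commutativity of \eqref{dcommutesdeRham}) is controlled by Cauchy--Schwarz together with the second-order approximation $\|\boldsymbol{\phi}-\Pi_h^{\rm ND}\boldsymbol{\phi}\|_0\lesssim h^2|\boldsymbol{\phi}|_2$ from \eqref{eq:IhND2}, the regularity $|\boldsymbol{\phi}|_2\lesssim\varepsilon^{-3/2}\|f\|_0$ from \eqref{sregularitygrad}, and the dual regularity \eqref{bdual2regularity}. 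The contribution of $\boldsymbol{e}_h$ is reduced to the mixed error equation in the proof of Theorem~\ref{thm:Bncerrorestima1} (the identity leading to \eqref{eq:202506042}) evaluated at the dual interpolants $I_h^{\Phi}\hat{\boldsymbol{\phi}}\in\Phi_h$ and $Q^{\rm RT}_h\hat{\boldsymbol{p}}$: here $b_h(\boldsymbol{e}_h,0;\cdot)=0$ while $b_h(I_h^{\Phi}\hat{\boldsymbol{\phi}},0;\cdot)=(\curl\Pi_h^{\rm ND}\hat{\boldsymbol{\phi}},\cdot)=(I_h^{\rm RT}\curl\hat{\boldsymbol{\phi}},\cdot)=0$ because $\curl\hat{\boldsymbol{\phi}}=0$, so only the consistency functional \eqref{consistenterror01} at $I_h^{\Phi}\hat{\boldsymbol{\phi}}$ and an $a_h$-bilinear remainder survive. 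Inside the consistency functional the term $(\nabla(w-w_h),\Pi_h^{\rm ND}\hat{\boldsymbol{\phi}})=(\nabla(w-w_h),\nabla\Pi^{\grad}_h\hat{u})$ drops by the Galerkin orthogonality \eqref{orthogonality}, and the remaining pieces must each be arranged as a product of a primal factor $\mathcal{O}(\varepsilon^{r-1/2}h^{1-r}\|f\|_0)$ and a dual factor that gains an extra power of $h$ and carries the $\varepsilon^{-1}$ hidden in $\|\hat{\boldsymbol{\phi}}\|_1\lesssim\varepsilon^{-1}\|\boldsymbol{e}\|_0$; this again produces $\varepsilon^{-3/2}h^2\|f\|_0\,\|\boldsymbol{e}\|_0$. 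Summing and dividing by $\|\boldsymbol{e}\|_0$ yields \eqref{eq:phihphestimateL2}.

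The hard part -- and the reason the preceding lemmas record the second-order dual formulation \eqref{bdual2}, not merely the biharmonic form \eqref{bdual1} -- is the $\varepsilon$-bookkeeping in the last step: the $L^2$ estimate has to be squeezed term by term so that every consistency and nonconformity contribution is genuinely $\mathcal{O}(\varepsilon^{-3/2}h^2\|f\|_0\|\boldsymbol{e}\|_0)$ and none degrades to $\mathcal{O}(\varepsilon^{1/2}\|f\|_0\|\boldsymbol{e}\|_0)$, which would be worse when $h\ll\varepsilon$. This forces one to choose the free parameter $r$ in \eqref{consistenterror01} and the interpolation order ($L^2$ versus $H^1$, order $h$ versus $h^2$) differently in different terms, to exploit the weak continuity \eqref{Phiweakcontinuity1} and the bound \eqref{eq:IhNDestimes} on element faces, and to keep track of the $I_h^{\rm ND}$-modification appearing both in $a_h$ and in the discrete load $(\nabla w_h,I_h^{\rm ND}\boldsymbol{\psi})$. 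A clean way to organize this is to introduce the discrete dual problem associated with \eqref{CC}--\eqref{CD} (well-posed by Lemma~\ref{stabilityresults}) and to test the consistency functional against the \emph{discrete} dual error $\hat{\boldsymbol{\phi}}_h-I_h^{\Phi}\hat{\boldsymbol{\phi}}$, whose $\|\cdot\|_{\varepsilon,h}$-norm is $\mathcal{O}(\varepsilon^{-1}h\|\boldsymbol{e}\|_0)$, rather than against $\hat{\boldsymbol{\phi}}$ itself.
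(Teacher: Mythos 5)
Your proposal is essentially the paper's proof: the same duality argument built on \eqref{bdual2}, the same cancellation of the $\curl\hat{\boldsymbol{p}}$ term via $\curl(\boldsymbol{\phi}-I_h^{\rm ND}\boldsymbol{\phi}_h)=0$, the same use of \eqref{CC} tested with $\boldsymbol{\psi}=I_h^{\Phi}\hat{\boldsymbol{\phi}}$ together with the Galerkin orthogonality \eqref{orthogonality} and the identity \eqref{20250603}, and the same mechanism of pairing each first-order consistency or nonconformity factor with a dual interpolation error (using \eqref{Phiweakcontinuity1}, \eqref{eq:IhND2}, \eqref{BEqs}, \eqref{eq:IhNDestimes} and \eqref{bdual2regularity}) to gain the extra power of $h$. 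The only difference is organizational: the paper splits the pairing directly as $I_1+I_2+I_3$ in terms of $\boldsymbol{\phi}-\boldsymbol{\phi}_h$ and $\boldsymbol{\phi}_h-I_h^{\rm ND}\boldsymbol{\phi}_h$ rather than routing through $I_h^{\Phi}\boldsymbol{\phi}$, the consistency functional \eqref{consistenterror01} and a discrete dual problem, but the term-by-term estimates you describe are exactly those carried out in the paper.
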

\begin{proof}
By combining equation \eqref{bdual2}, the fact $\curl(\boldsymbol{\phi} - I_h^{\rm ND}\boldsymbol{\phi}_{h})=0$, and integration by parts, we obtain
\begin{align}\label{wqg2}
\|\boldsymbol{\phi} - I_h^{\rm ND}\boldsymbol{\phi}_{h}\|_0^{2}
&=(\boldsymbol{\phi} - I_h^{\rm ND}\boldsymbol{\phi}_{h},
-\varepsilon^2\Delta\hat{\boldsymbol{\phi}}+\hat{\boldsymbol{\phi}}+\curl \hat{\boldsymbol{p}})\notag  \\
&= - \varepsilon^2(\boldsymbol{\phi} - I_h^{\rm ND}\boldsymbol{\phi}_{h},\Delta\hat{\boldsymbol{\phi}})
+ (\boldsymbol{\phi} - I_h^{\rm ND}\boldsymbol{\phi}_{h},\hat{\boldsymbol{\phi}}) \notag  \\
&= -\varepsilon^2(\boldsymbol{\phi}_h-I_h^{\rm ND}\boldsymbol{\phi}_h,\Delta\hat{\boldsymbol{\phi}}) - \varepsilon^2(\boldsymbol{\phi}-\boldsymbol{\phi}_h,\Delta\hat{\boldsymbol{\phi}}) + (\boldsymbol{\phi} - I_h^{\rm ND}\boldsymbol{\phi}_{h},\hat{\boldsymbol{\phi}})\\
&=: I_1+I_2+I_3,\notag 
\end{align} 
where
\begin{align*}
I_1&=-\varepsilon^2(\boldsymbol{\phi}_h-I_h^{\rm ND}\boldsymbol{\phi}_h,\Delta\hat{\boldsymbol{\phi}})-\varepsilon^2\sum_{T\in\mathcal T_h}(\partial_n\hat{\boldsymbol{\phi}}, \boldsymbol{\phi}-\boldsymbol{\phi}_h)_{\partial T},\\
I_2&= (\boldsymbol{\phi} - I_h^{\rm ND}\boldsymbol{\phi}_{h},\hat{\boldsymbol{\phi}}-\Pi_h^{\rm ND}\hat{\boldsymbol{\phi}}) + \varepsilon^2(\nabla_h(\boldsymbol{\phi}-\boldsymbol{\phi}_h),\nabla_h(\hat{\boldsymbol{\phi}}-I_h^{\Phi}\hat{\boldsymbol{\phi}})), \\
I_3&=(\boldsymbol{\phi} - I_h^{\rm ND}\boldsymbol{\phi}_{h},\Pi_h^{\rm ND}\hat{\boldsymbol{\phi}}) + \varepsilon^2(\nabla_h(\boldsymbol{\phi}-\boldsymbol{\phi}_h),\nabla_h(I_h^{\Phi}\hat{\boldsymbol{\phi}})).
\end{align*}

Next, we estimate $I_1$, $I_2$ and $I_3$ in \eqref{wqg2}.
By \eqref{eq:IhNDestimes}, \eqref{varepsilonIh2} with $r=0$ and $s=2$, \eqref{eq:IhND2} and \eqref{sregularitygrad},
\begin{align*}
\|\boldsymbol{\phi}_h - I_h^{\rm ND}\boldsymbol{\phi}_{h}\|_0&=\|\boldsymbol{\phi}_h-\Pi_h^{\rm ND}\boldsymbol{\phi} - I_h^{\rm ND}(\boldsymbol{\phi}_{h}-\Pi_h^{\rm ND}\boldsymbol{\phi})\|_0\lesssim h|\boldsymbol{\phi}_h-\Pi_h^{\rm ND}\boldsymbol{\phi}|_{1,h} \\
& \leq h|\boldsymbol{\phi}-\boldsymbol{\phi}_h|_{1,h} + h|\boldsymbol{\phi}-\Pi_h^{\rm ND}\boldsymbol{\phi}|_{1,h} \lesssim \varepsilon^{-3/2}h^2 \|f\|_{0}.
\end{align*}
This together with \eqref{bdual2regularity} and the weak continuity \eqref{Phiweakcontinuity1} implies
\begin{align}\label{wqg20}
I_1&\lesssim   \varepsilon^{1/2} h^2\|f\|_{0}|\hat{\boldsymbol{\phi}}|_2+\varepsilon^2h|\boldsymbol{\phi}-\boldsymbol{\phi}_h|_{1,h}|\hat{\boldsymbol{\phi}}|_2 \lesssim  \varepsilon^{-3/2}h^2 \|f\|_{0}\|\boldsymbol{\phi} - I_h^{\rm ND}\boldsymbol{\phi}_{h}\|_0.
\end{align} 
Applying \eqref{varepsilonIh2} with $r=0$ and $s=2$, \eqref{eq:IhND2}, \eqref{BEqs} and \eqref{bdual2regularity},
\begin{align}\label{wqg22}
I_2&\lesssim \|\boldsymbol{\phi} - I_h^{\rm ND}\boldsymbol{\phi}_{h}\|_0\|\hat{\boldsymbol{\phi}}-\Pi_h^{\rm ND}\hat{\boldsymbol{\phi}}\|_0 + \varepsilon^2|\boldsymbol{\phi}-\boldsymbol{\phi}_h|_{1,h}|\hat{\boldsymbol{\phi}}-I_h^{\Phi}\hat{\boldsymbol{\phi}}|_{1,h} \\
\notag
&\lesssim  \varepsilon^{-1/2}h^2 \|f\|_{0}|\hat{\boldsymbol{\phi}}|_1 + \varepsilon^{1/2}h^2 \|f\|_{0}|\hat{\boldsymbol{\phi}}|_2 \lesssim  \varepsilon^{-3/2}h^2 \|f\|_{0}\|\boldsymbol{\phi} - I_h^{\rm ND}\boldsymbol{\phi}_{h}\|_0.
\end{align}
By the commutative diagram \eqref{dcommutesdeRham}, $\Pi_h^{\rm ND}\hat{\boldsymbol{\phi}}=\nabla\Pi_h^{\grad}\hat{u}$. Then we acquire from equation \eqref{CC} with $\boldsymbol{\psi}=I_h^{\Phi}\hat{\boldsymbol{\phi}}$ and the Galerkin orthogonality \eqref{orthogonality} that
\begin{align*}
I_3&=(\boldsymbol{\phi},\Pi_h^{\rm ND}\hat{\boldsymbol{\phi}}) + \varepsilon^2(\nabla\boldsymbol{\phi},\nabla_h(I_h^{\Phi}\hat{\boldsymbol{\phi}})) -  (\nabla w_h,\Pi_h^{\rm ND}\hat{\boldsymbol{\phi}}) \\
&=(\boldsymbol{\phi},\Pi_h^{\rm ND}\hat{\boldsymbol{\phi}}) + \varepsilon^2(\nabla\boldsymbol{\phi},\nabla_h(I_h^{\Phi}\hat{\boldsymbol{\phi}})) -  (\nabla w,\Pi_h^{\rm ND}\hat{\boldsymbol{\phi}}).
\end{align*}
Noting the fact $\nabla w= - \varepsilon^2\Delta\boldsymbol{\phi}+\boldsymbol{\phi}+\curl\boldsymbol{p}$ by \eqref{20250603}, we have from the weak continuity \eqref{Phiweakcontinuity1}, estimate \eqref{eq:IhND2} of $\Pi_h^{\rm ND}$, estimate \eqref{BEqs} of $I_h^{\Phi}$, and the regularity \eqref{sregularitygrad} and \eqref{bdual2regularity} that
\begin{align*}
I_3&=\varepsilon^2(\nabla\boldsymbol{\phi},\nabla_h(I_h^{\Phi}\hat{\boldsymbol{\phi}})) + \varepsilon^2(\Delta\boldsymbol{\phi},\Pi_h^{\rm ND}\hat{\boldsymbol{\phi}}) \\
&=\varepsilon^2(\Delta\boldsymbol{\phi},\Pi_h^{\rm ND}\hat{\boldsymbol{\phi}}-I_h^{\Phi}\hat{\boldsymbol{\phi}}) + \varepsilon^2\sum_{T\in\mathcal T_h}(\partial_n\boldsymbol{\phi}, I_h^{\Phi}\hat{\boldsymbol{\phi}}-\hat{\boldsymbol{\phi}})_{\partial T} \\
&\lesssim \varepsilon^2|\boldsymbol{\phi}|_2(\|\Pi_h^{\rm ND}\hat{\boldsymbol{\phi}}-I_h^{\Phi}\hat{\boldsymbol{\phi}}\|_0+h|I_h^{\Phi}\hat{\boldsymbol{\phi}}-\hat{\boldsymbol{\phi}}|_{1,h}) \\
&\lesssim \varepsilon^2h^2|\boldsymbol{\phi}|_2|\hat{\boldsymbol{\phi}}|_2\lesssim  \varepsilon^{-3/2}h^2 \|f\|_{0}\|\boldsymbol{\phi} - I_h^{\rm ND}\boldsymbol{\phi}_{h}\|_0.
\end{align*}
Finally, substituting the estimates \eqref{wqg20}, \eqref{wqg22}, and the above into \eqref{wqg2}, we obtain the desired resulf \eqref{eq:phihphestimateL2}.
\end{proof}

Let $\tilde{u}\in H_0^1(\Omega)$ be the solution to the following dual problem:
\begin{equation}
-\Delta \tilde{u}=u-u_h.   \label{eqdual}
\end{equation}
Assume that the dual problem \eqref{eqdual} admits the following regularity estimate:
\begin{equation}\label{utilderegularity}
\|\tilde{u}\|_2\lesssim \|u-u_{h}\|_0.
\end{equation}
We refer the reader to \cite{MitreaMitreaYan2010, GaoLai2020, Kadlec1964, Talenti1965} for regularity results pertaining to the Poisson equation on polytopal domains.

\begin{lemma}
Let $(w, \boldsymbol{\phi}=\nabla u, \boldsymbol{p},0,u)$ be the solution of the decoupled formulation~\eqref{decoupleA}, and $(w_h, \boldsymbol{\phi}_h, \boldsymbol{p}_h, 0, u_h)$ be the solution of the discrete method~\eqref{ImprovedHcurl}. Assume the regularity \eqref{utilderegularity} holds. Assume $u_0\in H^s(\Omega)$ with $2\leq s\leq 3$. We have
\begin{align}
\|u-u_{h}\|_{0}&\lesssim \varepsilon^{1/2}h \|f\|_{0} +h^{s}|u_0|_{s},\label{uL2error1}\\
\|u_0-u_{h}\|_{0}&\lesssim  \varepsilon^{1/2}  \|f\|_{0} +h^{s}|u_0|_{s}.\label{u0L2error}
\end{align}
\end{lemma}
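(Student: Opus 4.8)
The plan is to prove \eqref{uL2error1} by an Aubin--Nitsche duality argument and then deduce \eqref{u0L2error} from it. For the latter I would simply write $\|u_0-u_h\|_0\le\|u_0-u\|_0+\|u-u_h\|_0$, bound $\|u_0-u\|_0\le\|u-u_0\|_1\lesssim\varepsilon^{1/2}\|f\|_0$ by \eqref{sregularity}, and invoke \eqref{uL2error1} together with $\varepsilon^{1/2}h\le\varepsilon^{1/2}$. For \eqref{uL2error1}, let $\tilde u\in H_0^1(\Omega)$ solve the dual problem \eqref{eqdual}, so that \eqref{utilderegularity} gives $\|\tilde u\|_2\lesssim\|u-u_h\|_0$. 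Since $u-u_h\in H_0^1(\Omega)$, $\boldsymbol{\phi}=\nabla u$ and $I_h^{\rm ND}\boldsymbol{\phi}_h=\nabla u_h$, integration by parts yields
\begin{equation*}
\|u-u_h\|_0^2=(\nabla(u-u_h),\nabla\tilde u)=(\boldsymbol{\phi}-I_h^{\rm ND}\boldsymbol{\phi}_h,\nabla\tilde u).
\end{equation*}
Let $\zeta\in W_h$ be the canonical interpolant of $\tilde u$ determined by the degrees of freedom \eqref{WTDoFs} (well defined as $\tilde u\in H^2(\Omega)$), for which $|\zeta|_{2,h}\lesssim|\tilde u|_2$ and $\|\nabla(\tilde u-\zeta)\|_0\lesssim h|\tilde u|_2$, and set $\chi_h:=I_h^{\grad}\zeta\in V_h^{\grad}$; this is the nodal quadratic Lagrange interpolant of $\tilde u$, so $\|\nabla(\tilde u-\chi_h)\|_0\lesssim h|\tilde u|_2$. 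Splitting $(\boldsymbol{\phi}-I_h^{\rm ND}\boldsymbol{\phi}_h,\nabla\tilde u)=(\boldsymbol{\phi}-I_h^{\rm ND}\boldsymbol{\phi}_h,\nabla(\tilde u-\chi_h))+(\boldsymbol{\phi}-I_h^{\rm ND}\boldsymbol{\phi}_h,\nabla\chi_h)$, the first summand is $\le\|\boldsymbol{\phi}-I_h^{\rm ND}\boldsymbol{\phi}_h\|_0\,\|\nabla(\tilde u-\chi_h)\|_0$, which by \eqref{varepsilonIh2} with $r=1$ and $|\tilde u|_2\lesssim\|u-u_h\|_0$ is $\lesssim(\varepsilon^{1/2}h\|f\|_0+h^s|u_0|_s)\|u-u_h\|_0$, as required.

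The remaining term $(\boldsymbol{\phi}-I_h^{\rm ND}\boldsymbol{\phi}_h,\nabla\chi_h)$ is the crux. Since $\curl\nabla\zeta=0$, the function $\boldsymbol{\psi}:=\nabla\zeta$ lies in $\nabla W_h\subset\Phi_h$, satisfies $\curl\boldsymbol{\psi}=0$, and by the commutativity of \eqref{dcommutesthree} obeys $I_h^{\rm ND}\boldsymbol{\psi}=\nabla(I_h^{\grad}\zeta)=\nabla\chi_h$. Testing \eqref{CC} with $(\boldsymbol{\psi},0)$ and using $\lambda_h=0$, $\curl\boldsymbol{\psi}=0$ gives
\begin{equation*}
(I_h^{\rm ND}\boldsymbol{\phi}_h,\nabla\chi_h)=(\nabla w_h,\nabla\chi_h)-\varepsilon^2(\nabla_h\boldsymbol{\phi}_h,\nabla_h^2\zeta),
\end{equation*}
where $\nabla_h^2\zeta$ denotes the piecewise Hessian. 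On the continuous side, the identity \eqref{20250603} $\curl\boldsymbol{p}=\nabla w+\varepsilon^2\Delta\boldsymbol{\phi}-\boldsymbol{\phi}$ together with $(\curl\boldsymbol{p},\nabla\chi_h)=0$ --- valid since $\curl\boldsymbol{p}\in H(\div,\Omega)$ with $\div\curl\boldsymbol{p}=0$ (using \eqref{eqtwo}) and $\chi_h\in H_0^1(\Omega)$ --- gives $(\boldsymbol{\phi},\nabla\chi_h)=(\nabla w,\nabla\chi_h)+\varepsilon^2(\Delta\boldsymbol{\phi},\nabla\chi_h)$. Subtracting and killing $(\nabla(w-w_h),\nabla\chi_h)$ by the Galerkin orthogonality \eqref{orthogonality},
\begin{equation*}
(\boldsymbol{\phi}-I_h^{\rm ND}\boldsymbol{\phi}_h,\nabla\chi_h)=\varepsilon^2(\Delta\boldsymbol{\phi},\nabla\chi_h)+\varepsilon^2(\nabla_h\boldsymbol{\phi}_h,\nabla_h^2\zeta).
\end{equation*}
Writing $\nabla_h\boldsymbol{\phi}_h=\nabla_h(\boldsymbol{\phi}_h-\boldsymbol{\phi})+\nabla\boldsymbol{\phi}$, integrating $\varepsilon^2(\nabla\boldsymbol{\phi},\nabla_h^2\zeta)$ by parts element by element (note $\nabla\zeta\in L^2(\Omega;\mathbb R^3)$ as $W_h\subset H_0^1(\Omega)$) and rearranging, the right-hand side becomes $\varepsilon^2(\Delta\boldsymbol{\phi},\nabla(\chi_h-\zeta))$ plus $\varepsilon^2(\nabla_h(\boldsymbol{\phi}_h-\boldsymbol{\phi}),\nabla_h^2\zeta)$ plus the element-boundary term $\varepsilon^2\sum_{T\in\mathcal T_h}(\partial_n\boldsymbol{\phi},\nabla\zeta)_{\partial T}$.

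It then remains to bound each of these three pieces by $\varepsilon^{1/2}h\|f\|_0\|u-u_h\|_0$. The first is $\lesssim\varepsilon^2|\boldsymbol{\phi}|_2\,\|\nabla(\chi_h-\zeta)\|_0\lesssim\varepsilon^2|\boldsymbol{\phi}|_2\,h|\tilde u|_2$, which is of the desired size by \eqref{sregularitygrad} ($\varepsilon^2|\boldsymbol{\phi}|_2\lesssim\varepsilon^{1/2}\|f\|_0$) and $|\tilde u|_2\lesssim\|u-u_h\|_0$. For the second, $\varepsilon^2(\nabla_h(\boldsymbol{\phi}_h-\boldsymbol{\phi}),\nabla_h^2\zeta)\le(\varepsilon|\boldsymbol{\phi}_h-\boldsymbol{\phi}|_{1,h})(\varepsilon|\zeta|_{2,h})$; here one uses \eqref{varepsilonIh2} \emph{at the index $s=2$} together with \eqref{H2regularity} ($|u_0|_2\lesssim\|f\|_0$) and $r=0$ to get $\varepsilon|\boldsymbol{\phi}_h-\boldsymbol{\phi}|_{1,h}\lesssim\varepsilon^{-1/2}h\|f\|_0$, while $\varepsilon|\zeta|_{2,h}\lesssim\varepsilon|\tilde u|_2\lesssim\varepsilon\|u-u_h\|_0$, so the product is $\lesssim\varepsilon^{1/2}h\|f\|_0\|u-u_h\|_0$. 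For the third, $\boldsymbol{\phi}\in H^2(\Omega;\mathbb R^3)$ has a single-valued normal derivative across interior faces while $\int_F[\partial_n\zeta]\,\dd S=0$ by the single-valuedness of the degree of freedom \eqref{w3}; hence $\sum_T(\partial_n\boldsymbol{\phi},\nabla\zeta)_{\partial T}$ collapses to a sum over interior faces of $(\partial_n\boldsymbol{\phi}\cdot\boldsymbol{n}_F-Q_F^0(\partial_n\boldsymbol{\phi}\cdot\boldsymbol{n}_F),[\partial_n\zeta])_F$, and the trace inequality with the interpolation bound $\|[\partial_n\zeta]\|_{0,F}\lesssim h_F^{1/2}|\tilde u|_{2,\cdot}$ yields $\sum_T(\partial_n\boldsymbol{\phi},\nabla\zeta)_{\partial T}\lesssim h|\boldsymbol{\phi}|_2|\tilde u|_2$, so again the contribution is $\lesssim\varepsilon^{1/2}h\|f\|_0\|u-u_h\|_0$. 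Collecting all the bounds, $\|u-u_h\|_0^2\lesssim(\varepsilon^{1/2}h\|f\|_0+h^s|u_0|_s)\|u-u_h\|_0$, which is \eqref{uL2error1}.

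The main difficulty is the cross term $\varepsilon^2(\nabla_h(\boldsymbol{\phi}_h-\boldsymbol{\phi}),\nabla_h^2\zeta)$: a careless use of \eqref{varepsilonIh2} at a general smoothness exponent would leave a spurious factor $\varepsilon h^{s-1}|u_0|_s$, which exceeds $h^s|u_0|_s$ when $\varepsilon\gtrsim h$. The resolution is to invoke \eqref{varepsilonIh2} only at $s=2$, where $|u_0|_2\lesssim\|f\|_0$ holds unconditionally, so that the extra power of $\varepsilon$ supplied by $\varepsilon|\zeta|_{2,h}\lesssim\varepsilon\|u-u_h\|_0$ exactly absorbs the $\varepsilon^{-1/2}$ in the energy-error estimate; the only other delicate point is the element-boundary term, whose control rests on the weak single-valuedness of the normal-derivative degrees of freedom of $W_h$.
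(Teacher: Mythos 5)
Your proof is correct in outline and follows essentially the same route as the paper's: the same duality argument with \eqref{eqdual}, the same splitting of $\nabla\tilde u$ into an interpolation error plus a discrete gradient used as a test function in \eqref{CC}, the same use of the identity \eqref{20250603}, Galerkin orthogonality and element-wise integration by parts combined with the weak continuity of the normal-component/normal-derivative DoFs, and the same invocation of \eqref{varepsilonIh2} at $r=0$, $s=2$ to absorb the cross term; the only difference is that you work with the nodal interpolants $\zeta\in W_h$ and $\chi_h=I_h^{\grad}\zeta$ where the paper uses $I_h^{\Phi}(\nabla\tilde u)$ and $\Pi_h^{\grad}\tilde u$, which coincide in spirit through the commutative diagram \eqref{dcommutes2}. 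One caveat: since $\tilde u\in H_0^1(\Omega)\cap H^2(\Omega)$ but not $H_0^2(\Omega)$, the boundary DoFs \eqref{w3} of $\zeta$ are forced to vanish although $\partial_n\tilde u\neq 0$ on $\partial\Omega$, so the asserted bounds $\|\nabla(\tilde u-\zeta)\|_0\lesssim h|\tilde u|_2$ and $|\zeta|_{2,h}\lesssim|\tilde u|_2$ do not hold verbatim on boundary elements (a boundary-strip argument only yields an $O(h^{1/2})$ control there); the paper's own proof carries the parallel subtlety in applying \eqref{BEqs} to $\nabla\tilde u\notin H_0^1(\Omega;\mathbb{R}^3)$, so this is a delicate point shared with the paper rather than a defect specific to your argument.
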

\begin{proof}
Using integration by parts on \eqref{eqdual}, $\nabla(\Pi_h^{\grad}\tilde{u})=\Pi_h^{\rm ND}(\nabla\tilde{u})$ by the commutative diagram~\eqref{dcommutesdeRham}, and estimate \eqref{eq:IhND2} of $\Pi_h^{\rm ND}$, we obtain
\begin{equation}\label{utilde}
\begin{aligned}
\|u-u_h\|_0^{2}
&=-(u-u_h,\Delta \tilde{u})=(\nabla(u-u_h),\nabla\tilde{u}) \\
&=(\nabla(u-u_h),\nabla(\tilde{u}-\Pi_h^{\grad}\tilde{u})) +(\boldsymbol{\phi}-I_h^{\rm ND}\boldsymbol{\phi}_{h},\nabla(\Pi_h^{\grad}\tilde{u}))\\
&=(\nabla(u-u_h),\nabla\tilde{u}-\Pi_h^{\rm ND}(\nabla\tilde{u})) +(\boldsymbol{\phi}-I_h^{\rm ND}\boldsymbol{\phi}_{h},\Pi_h^{\rm ND}(\nabla\tilde{u}))\\
&\lesssim\,  h|u-u_h|_1|\tilde{u}|_2+(\boldsymbol{\phi}-I_h^{\rm ND}\boldsymbol{\phi}_{h},\Pi_h^{\rm ND}(\nabla\tilde{u})).
\end{aligned} 
\end{equation}

Notice that $\curl(\Pi_h^{\rm ND}(\nabla\tilde{u}))=0$.
By taking $\boldsymbol{\psi} = I_h^{\Phi}(\nabla\tilde{u})$ and $\mu = 0$ in equation~\eqref{CC}, we get
\begin{equation*}
\varepsilon^2(\nabla_h\boldsymbol{\phi}_h,\nabla_{h}(I_h^{\Phi}(\nabla\tilde{u}))) + (I_h^{\rm ND}\boldsymbol{\phi}_h,\Pi_h^{\rm ND}(\nabla\tilde{u})) = (\nabla w_h,\Pi_h^{\rm ND}(\nabla\tilde{u})).
\end{equation*}
This combined with the Galerkin orthogonality condition \eqref{orthogonality} induces
\begin{align*}
(\boldsymbol{\phi}-I_h^{\rm ND}\boldsymbol{\phi}_{h},\Pi_h^{\rm ND}(\nabla\tilde{u}))
&=(\boldsymbol{\phi},\Pi_h^{\rm ND}(\nabla\tilde{u}))-(I_h^{\rm ND}\boldsymbol{\phi}_{h},\Pi_h^{\rm ND}(\nabla\tilde{u}))\\
&=(\boldsymbol{\phi},\Pi_h^{\rm ND}(\nabla\tilde{u}))-(\nabla w_h,\Pi_h^{\rm ND}(\nabla\tilde{u})) \\
&\quad+\varepsilon^2(\nabla_h\boldsymbol{\phi}_h,\nabla_{h}(I_h^{\Phi}(\nabla\tilde{u})))\\
&=(\boldsymbol{\phi}-\nabla w,\Pi_h^{\rm ND}(\nabla\tilde{u}))+\varepsilon^2(\nabla_h\boldsymbol{\phi}_h,\nabla_{h}(I_h^{\Phi}(\nabla\tilde{u}))).
\end{align*} 

Using the identity $\boldsymbol{\phi} = \nabla w + \varepsilon^2\Delta \boldsymbol{\phi} - \curl \boldsymbol{p}$, together with the integration by parts, %and the fact that
%$\curl (\Pi_h^{\rm ND}(\nabla\tilde{u}))=\curl(\nabla\Pi_h^{\grad}\tilde{u})=0$, 
we obtain
\begin{equation*}
(\boldsymbol{\phi}-I_h^{\rm ND}\boldsymbol{\phi}_{h},\Pi_h^{\rm ND}(\nabla\tilde{u})) =(  \varepsilon^2\Delta\boldsymbol{\phi},\Pi_h^{\rm ND}(\nabla\tilde{u}))+\varepsilon ^2(\nabla_h \boldsymbol{\phi}_h,\nabla_h I_h^{\Phi}(\nabla\tilde{u}))
=: I_1+I_2,
\end{equation*}
where
\begin{align*}
I_1&=( \varepsilon^2\Delta\boldsymbol{\phi},\Pi_h^{\rm ND}(\nabla\tilde{u})-I_h^{\Phi}(\nabla\tilde{u})),\\
I_2&= ( \varepsilon^2\Delta\boldsymbol{\phi},I_h^{\Phi}(\nabla\tilde{u}))
	+\varepsilon ^2(\nabla_h \boldsymbol{\phi}_h,\nabla_h I_h^{\Phi}(\nabla\tilde{u})).
\end{align*}
Now let us estimate the terms $I_1$ and $I_2$.
By employing estimate \eqref{eq:IhND2} of $\Pi_h^{\rm ND}$, estimate \eqref{BEqs} of $I_h^{\Phi}$, together with the regularity results \eqref{sregularitygrad} and \eqref{utilderegularity}, we derive the following estimate:
\begin{equation*}
I_1\lesssim  \varepsilon^2|\boldsymbol{\phi}|_2\|\Pi_h^{\rm ND}(\nabla\tilde{u})-I_h^{\Phi}(\nabla\tilde{u})\|_0\lesssim  \varepsilon^2 h |\boldsymbol{\phi}|_2|\tilde{u}|_2\lesssim   \varepsilon^{1/2}h \|f\|_{0}\|u-u_h\|_0.
\end{equation*}
Applying the weak continuity \eqref{Phiweakcontinuity1}, estimate \eqref{BEqs} of $I_h^{\Phi}$, estimate \eqref{varepsilonIh2} with $r=0$ and $s=2$, the regularity \eqref{sregularitygrad} and the regularity \eqref{utilderegularity}, then
\begin{align*}
I_2&=\varepsilon^2 ( \nabla_h (\boldsymbol{\phi}_h-\boldsymbol{\phi}),\nabla_h I_h^{\Phi}(\nabla\tilde{u}))
+\varepsilon^2\sum_{T\in\mathcal T_h}(\partial_n\boldsymbol{\phi}, I_h^{\Phi}(\nabla\tilde{u}))_{\partial T}\\
&\lesssim\varepsilon^2(|\boldsymbol{\phi}-\boldsymbol{\phi}_h |_{1,h}+ h|\boldsymbol{\phi}|_2|)|I_h^{\Phi}(\nabla\tilde{u})|_{1,h}\lesssim\varepsilon^2(|\boldsymbol{\phi}-\boldsymbol{\phi}_h |_{1,h}+ h|\boldsymbol{\phi}|_2|)|\tilde{u}|_2\\
&\lesssim(\varepsilon h |u_0|_2 +\varepsilon^{1/2}h \|f\|_{0})\|u-u_h\|_0.
\end{align*}
Then by the regularity \eqref{H2regularity}.
\begin{align*}
(\boldsymbol{\phi}-I_h^{\rm ND}\boldsymbol{\phi}_{h},\nabla\Pi_h^{\grad}\tilde{u})
\lesssim \varepsilon^{1/2}h \|f\|_{0}\|u-u_h\|_0.
\end{align*}
Combining this with estimate \eqref{eq:errorestimatauhH03} for the case $r = 1$, and equation \eqref{utilde}, the estimate \eqref{uL2error1} is established. Moreover, since $\|u_0-u_{h}\|_{0}\lesssim\|u_0-u\|_{0}+\|u-u_{h}\|_{0}$, 
applying the estimate \eqref{uL2error1} together with the regularity result \eqref{sregularity}, we arrive at the desired estimate \eqref{u0L2error}.
\end{proof}

\begin{remark}\rm
By selecting $r=0$,
the estimates \eqref{varepsilonIh2} and \eqref{eq:phihphestimateL2} show that 
\begin{equation*}
\varepsilon|\boldsymbol{\phi}-\boldsymbol{\phi}_{h}|_{1,h}+\|\boldsymbol{\phi}-I_h^{\rm ND} \boldsymbol{\phi}_h\|_0=\mathcal{O}(h),\quad |u-u_h|_1=\mathcal{O}(h^2),\quad \|u-u_h\|_0=\mathcal{O}(h^2)
\end{equation*}
for fixed parameter $\varepsilon>0$.
When $\varepsilon \rightarrow 0$ and $u_0\in H^3(\Omega)$, 
the estimates \eqref{varepsilonIh2}-\eqref{eq:errorestimatauhH04} with $r=1$ and \eqref{uL2error1}-\eqref{u0L2error} show that 
\begin{equation*}
\varepsilon|\boldsymbol{\phi}-\boldsymbol{\phi}_{h}|_{1,h}+\|\boldsymbol{\phi}-I_h^{\rm ND} \boldsymbol{\phi}_h\|_0=\mathcal{O}(h^2),\quad |u-u_h|_1=\mathcal{O}(h^2), \quad \|u-u_h\|_0=\mathcal{O}(h^3)
\end{equation*}
\begin{equation*}
\varepsilon|\boldsymbol{\phi}_0-\boldsymbol{\phi}_{h}|_{1,h}+\|\boldsymbol{\phi}_0-I_h^{\rm ND} \boldsymbol{\phi}_h\|_0=\mathcal{O}(h^2),\quad |u_0-u_h|_1=\mathcal{O}(h^2),\quad \|u_0-u_h\|_0=\mathcal{O}(h^3).
\end{equation*}
Therefore, estimates \eqref{varepsilonIh2}-\eqref{eq:errorestimatauhH04}, \eqref{eq:phihphestimateL2} and \eqref{uL2error1}-\eqref{u0L2error} are optimal and robust with respect to both the parameter  $\varepsilon$ and the mesh size $h$.
\end{remark}

\begin{remark}\rm\label{nointerpolation}
	An alternative decoupled finite element method for the fourth-order singular perturbation problem \eqref{eqtwo}, which avoids the use of the interpolation operator $I_h^{\rm ND}$, is defined as follows: find $w_h \in V_h^{\grad}$, $\boldsymbol{\phi}_{h0} \in \Phi_h$, $\boldsymbol{p}_{h0} \in V_h^{\div}$, $\lambda_{h0} \in \mathcal{Q}_h$, and $u_{h0} \in V_h^{\grad}$ such that
	\begin{subequations}\label{decoupleHcurlgrad}
	\begin{align}
				(\nabla w_h,\nabla v)&=(f,v),\label{BA}\\
				\varepsilon^2\tilde{a}_h(\boldsymbol{\phi}_{h0},\lambda_{h0};\boldsymbol{\psi},\mu)
				+(\curl\boldsymbol{\psi},\boldsymbol{p}_{h0})&=(\nabla w_{h},\boldsymbol{\psi}),\label{BC} \\
				(\mu,\div\boldsymbol{p}_{h0})&=0,\label{BC0} \\
				b(\boldsymbol{\phi}_{h0}, \lambda_{h0}; \boldsymbol{q}) &=0 ,\label{BD}\\
				(\nabla u_{h0},\nabla \chi)&=(\boldsymbol{\phi}_{h0},\nabla\chi),\label{BE}
		\end{align}
		for any $v,\,\chi\in V_{h}^{\grad}$, $\boldsymbol{\psi}\in \Phi_{h},\,\mu\in\mathcal{Q}_{h}$, and $\boldsymbol{q}\in  V_{h}^{\div}$,
	\end{subequations}
 where $\tilde{a}_h(\boldsymbol{\phi}_{h0},\lambda_{h0};\boldsymbol{\psi},\mu):=\varepsilon^2(\nabla_h\boldsymbol{\phi}_{h0},\nabla_h\boldsymbol{\psi})+
		(\boldsymbol{\phi}_{h0},\boldsymbol{\psi})$. 
	Following the analysis in this section, the method \eqref{decoupleHcurlgrad} is well-posed, and it satisfies $\curl \boldsymbol{\phi}_{h0} = 0$ and $\lambda_{h0} = 0$. As $\varepsilon \to 0$, we can establish the following error estimates:
	\begin{align*}
		\varepsilon|\boldsymbol{\phi}-\boldsymbol{\phi}_{h}|_{1,h}+\|\boldsymbol{\phi}- \boldsymbol{\phi}_h\|_0&=\mathcal{O}(h^{1/2}),\\
		\varepsilon|\boldsymbol{\phi}_0-\boldsymbol{\phi}_{h}|_{1,h}+\|\boldsymbol{\phi}_0-\boldsymbol{\phi}_h\|_0 &=\mathcal{O}(h^{1/2}).
	\end{align*}
	These estimates demonstrate a sharp, though suboptimal, half-order convergence rate, consistent with the results reported in~\cite[Theorem 3.4]{HuangShiWang2021} and \cite[Theorem 3.6]{WangHuangTangZhou2018}. This half-order convergence rate is further validated by numerical experiments. The motivation for introducing the interpolation operator $I_h^{\rm ND}$ in the decoupled finite element method \eqref{ImprovedHcurl} lies in the desire to achieve the optimal convergence.
\end{remark}

%\begin{remark}\rm
%Using the duality argument, we can show that
%\begin{equation}\label{eq:errorestimatauh0L2}
%\|u-u_{h}\|_{0}\lesssim \varepsilon^{r-1/2} h ^{1-r}\|f\|_{0}+h^3|u_0|_{3}+\|\boldsymbol{\phi}-I_h^{\rm ND}\boldsymbol{\phi}_{h}\|_0.
%\end{equation}
%\begin{equation}\label{eq:errorestimatauh00L2}
%\|u_0-u_{h}\|_{0}\lesssim \varepsilon^{1/2} \|f\|_{0}+h^3|u_0|_{3}+\|\boldsymbol{\phi}-I_h^{\rm ND}\boldsymbol{\phi}_{h}\|_0.
%\end{equation}
%\end{remark}
%%%%%%%%%%%%%%%%%%%%%%%%%%%%%%%%%%%%%%%%%%%%%%%%%%%%%
\section{Numerical results}\label{chap6}
This section presents numerical results to validate the proposed decoupled mixed finite element method. 
Let $\Omega$ be the unit cube domain $(0,1)^3$.
Introduce error notation
\begin{align*}
	{\rm Err}(\boldsymbol{\phi})&:=(\varepsilon^2|\boldsymbol{\phi}-\boldsymbol{\phi}_{h}|_{1,h}^2+\|\boldsymbol{\phi}-I_h^{\rm ND} \boldsymbol{\phi}_h\|_0^2)^{1/2},\\
	{\rm Err}_{0}(\boldsymbol{\phi})&:=(\varepsilon^2|\boldsymbol{\phi}-\boldsymbol{\phi}_{h0}|_{1,h}^2+\|\boldsymbol{\phi}-\boldsymbol{\phi}_{h0}\|_0^2)^{1/2}.
\end{align*}

\subsection{Numerical test without boundary layer}
We consider the fourth-order elliptic singular perturbation problem \eqref{eqtwo} with the exact solution given by
\begin{align*}
	u(\boldsymbol{x})=\sin^2(\pi x)\sin^2(\pi y)\sin^2(\pi z).
\end{align*}
The source term $f$ is computed from \eqref{eqtwo}.

The corresponding numerical errors are summarized in Table~\ref{errorIhPhi}, %and~\ref{erroru}, 
including %$\|\boldsymbol{\phi}-I_h^{\rm ND}\boldsymbol{\phi}_{h}\|_{0}$,\,\,\,
${\rm Err}(\boldsymbol{\phi})$, $\|u-u_{h}\|_0$  and $|u-u_{h}|_1$ for various values of $\varepsilon$ and mesh size~$h$.
As shown in Table~\ref{errorIhPhi}, ${\rm Err}(\boldsymbol{\phi})$ exhibits the first-order convergence $\mathcal{O}(h)$ for $\varepsilon = 1$ and $10^{-1}$, consistent with the theoretical estimate~\eqref{varepsilonIh2} for $r = 0$ and $s=2$. For $\varepsilon = 10^{-4}$ and $10^{-6}$, the convergence improves to second order $\mathcal{O}(h^2)$, which agrees with the theoretical result~\eqref{varepsilonIh2} for $r = 1$ and $s=3$.
Table~\ref{errorIhPhi} shows that $|u-u_{h}|_1=\mathcal{O}(h^2)$ for $\varepsilon = 1,\,10^{-1}$, which agrees with the theoretical result~\eqref{eq:phihphestimateL2}. For $\varepsilon = 10^{-4}$ and $10^{-6}$, the convergence rate remains $\mathcal{O}(h^2)$, consistent with the theoretical estimate~\eqref{eq:errorestimatauhH03} for $r = 1$ and $s=3$. Furthermore, the $L^2$-norm error $\|u - u_h\|_0$ exhibits second-order convergence, i.e., $\mathcal{O}(h^2)$, for $\varepsilon = 1$ and $10^{-1}$, which is consistent with the theoretical estimate~\eqref{eq:phihphestimateL2}. As $\varepsilon$ decreases to $10^{-4}$ and $10^{-6}$, the convergence improves to third order, i.e., $\mathcal{O}(h^3)$, in agreement with the refined estimate~\eqref{uL2error1}.
\begin{table}[ht]
	\centering
	\caption{Errors\,\,
		${\rm Err}(\boldsymbol{\phi})$,\,\,$\|u-u_{h}\|_0$ and $|u-u_{h}|_1$ of the decoupled method \eqref{ImprovedHcurl} without boundary layer.}
	\label{errorIhPhi}
	\begin{tabular}{cccc|cc|cccc}
		\toprule
		$\varepsilon$ \,  &  $h$
		  & ${\rm Err}(\boldsymbol{\phi})$ \,  &  rate   \,  	&$\|u-u_{h}\|_0$ \,  &  rate     \,  &  $|u-u_{h}|_1$ \,  &  rate\\
		\midrule
		\multirow{5}{*}{$1$} 
		& $2^{-2}$   &7.862e+00    &  -          &1.098e-01       &  -& 6.965e-01 &  -&  \\
		& $2^{-3}$    &4.924e+00  & 0.68     &4.378e-02       &  1.33&  2.887e-01& 1.27 &    \\
		& $2^{-4}$   &2.776e+00   & 0.83     & 1.429e-02      &1.62&  9.735e-02& 1.57 &   \\
		& $2^{-5}$   &1.466e+00   & 0.92     & 4.101e-03        & 1.80& 2.857e-02& 1.77&   \\
		\midrule
		\multirow{5}{*}{$10^{-1}$} 
		& $2^{-2}$    &9.538e-01    & -            & 6.889e-02     & -          &4.656e-01        &  -&   \\
		& $2^{-3}$    &5.309e-01    & 0.85     & 2.175e-02       &1.66    &1.610e-01          &1.53&\\
		& $2^{-4}$    &2.842e-01    & 0.90     & 6.373e-03    &1.77     &4.913e-02         &1.71&  \\
		& $2^{-5}$    &1.476e-01     & 0.95     & 1.772e-03      &1.85     &1.395e-02        &1.82&    \\
		\midrule
		\multirow{5}{*}{$10^{-4}$} 
		& $2^{-2}$    &2.572e-01     &  -         & 9.082e-03    &  -        &1.985e-01        & -&  \\
		& $2^{-3}$   &7.295e-02     & 1.82     &1.118e-03      &3.02     &5.645e-02      &1.81& \\
		& $2^{-4}$   &1.910e-02      & 1.93     &1.376e-04     &3.02     &1.483e-02      &1.93&  \\
		& $2^{-5}$   &4.838e-03     & 1.98     & 1.711e-05     &3.01     &3.765e-03      &1.98&  \\
		\midrule
		\multirow{5}{*}{$10^{-6}$} 
		& $2^{-2}$   &2.572e-01     & -           &9.082e-03   &  -           &1.985e-01      & -&  \\
		& $2^{-3}$   &7.295e-02    &1.82       &1.118e-03     &3.02      &5.645e-02     &1.81& \\
		& $2^{-4}$   &1.910e-02    &1.93        &1.376e-04    &3.02      &1.483e-02      &1.93&  \\
		& $2^{-5}$   &4.838e-03   &1.98       &1.711e-05      &3.01      &3.765e-03      &1.98&  \\
		\bottomrule
	\end{tabular}
\end{table}

%\begin{table}[ht]
%	\centering
%	\caption{Errors\,\,
%		$\|u-u_{h}\|_0$ and $|u-u_{h}|_1$.}
%	\label{erroru}
%	\begin{tabular}{cccc|cccccc}
	%	\toprule
	%	$\varepsilon$ \,  &  $h$
	%	&$\|u-u_{h}\|_0$ \,  &  rate     \,  &  $|u-u_{h}|_1$ \,  &  rate\\
	%	\midrule
	%	\multirow{5}{*}{$1$} 
	%	& $2^{-2}$ &1.098e-01  &  -& 6.965e-01 &  -&  \\
		%& $2^{-3}$ & 4.378e-02  &  1.33&  2.887e-01& 1.27 &  \\
	%	& $2^{-4}$ & 1.429e-02  &1.62&  9.735e-02& 1.57 &   \\
	%	& $2^{-5}$ &4.101e-03   & 1.80& 2.857e-02& 1.77&  \\
	%	\midrule
	%	\multirow{5}{*}{$10^{-1}$} 
	%	& $2^{-2}$ &6.889e-02  &  -&  4.656e-01&  -&  \\
	%	& $2^{-3}$ & 2.175e-02   & 1.66& 1.610e-01&  1.53&  \\
	%	& $2^{-4}$ & 6.373e-03  & 1.77&    4.913e-02&  1.71&  \\
	%	& $2^{-5}$ & 1.772e-03   &  1.85&  1.395e-02& 1.82&   \\
	%	\midrule
	%	\multirow{5}{*}{$10^{-4}$} 
	%	& $2^{-2}$ &9.082e-03 &  -&  1.985e-01&  -&  \\
	%& $2^{-3}$ &1.118e-03  &3.02& 5.645e-02& 1.81&  \\
	%	& $2^{-4}$ & 1.376e-04&3.02&   1.483e-02 & 1.93&   \\
	%	& $2^{-5}$ &1.711e-05 & 3.01&  3.765e-03 & 1.98&  \\
	%	\midrule
	%	\multirow{5}{*}{$10^{-6}$} 
	%	& $2^{-2}$ &9.082e-03 &  -&  1.985e-01&  -&  \\
	%	& $2^{-3}$ &1.118e-03  &3.02& 5.645e-02& 1.81&  \\
	%	& $2^{-4}$ & 1.376e-04&3.02&   1.483e-02 & 1.93&   \\
	%	& $2^{-5}$ &1.711e-05 & 3.01&  3.765e-03 & 1.98&  \\
	%	\bottomrule
	%\end{tabular}
%\end{table}

\subsection{Numerical test with boundary layer}
We next verify the convergence behavior of the nonconforming finite element methods~\eqref{ImprovedHcurl} and~\eqref{decoupleHcurlgrad} for problem~\eqref{eqtwo} with boundary layers.

To this end, consider the exact solution to the Poisson equation~\eqref{eqtwo1}:
\begin{align*}
	u_0(\boldsymbol{x})=\sin(\pi x)\sin(\pi y)\sin(\pi z).
\end{align*}
The corresponding right-hand side $f$ computed from~\eqref{eqtwo1} is used in~\eqref{eqtwo}, for which the exact solution $u$ is not known in closed form. When $\varepsilon$ is small, the solution $u$ develops pronounced boundary layers. In the numerical experiments that follow, we set $\varepsilon=10^{-6},\,10^{-8},\,10^{-10}$ to investigate such behavior.

Numerical results for the decoupled method~\eqref{decoupleHcurlgrad} are presented in Table~\ref{errorPhiP01}, which reports  ${\rm Err}_{0}(\boldsymbol{\phi}_0)$, $\|u_0-u_{h0}\|_0$ and $|u_0-u_{h0}|_1$. From Table~\ref{errorPhiP01}, we observe that ${\rm Err}_{0}(\boldsymbol{\phi}_0)=\mathcal{O}(h^{1/2})$, which agrees with the theoretical estimate outlined in Remark~\ref{nointerpolation}. In addition, Table~\ref{errorPhiP01} shows that
$|u_0-u_{h0}|_1=\mathcal{O}(h^{1/2})$, as anticipated, while $\|u_0-u_{h0}\|_0=\mathcal{O}(h)$.
\begin{table}[ht]
	\centering
	\caption{Errors\,\,${\rm Err}_{0}(\boldsymbol{\phi}_0)$,\,\,$\|u_0-u_{h0}\|_0$ and $|u_0-u_{h0}|_1$of the decoupled method \eqref{decoupleHcurlgrad} with boundary layer.}
	\label{errorPhiP01}
	\begin{tabular}{cccc|cc|cccc}
		\toprule
		$\varepsilon$ \,  &  	$h$
		  & ${\rm Err}_{0}(\boldsymbol{\phi}_0)$ \,  &  rate   \,&$\|u_0-u_{h0}\|_0$ \,  &  rate     \,  &  $|u_0-u_{h0}|_1$ \,  &  rate\\
		\midrule
		\multirow{5}{*}{$10^{-6}$} 
		& $2^{-2}$  &    5.475e-01 &    -    & 4.944e-02 &  -&  4.183e-01&  -&  \\
		& $2^{-3}$  &   3.643e-01 & 0.59 &  2.278e-02& 1.12& 2.534e-01&  0.72&    \\
		& $2^{-4}$  &   2.527e-01  & 0.53 & 1.103e-02  & 1.04& 1.656e-01&  0.61&   \\
		& $2^{-5}$ &    1.777e-01  & 0.51 &5.444e-03 &  1.02&  1.127e-01&0.56&  \\
		\midrule 
		\multirow{5}{*}{$10^{-8}$} 
			& $2^{-2}$  &    5.475e-01 &    -    & 4.944e-02 &  -&  4.183e-01&  -&  \\
		& $2^{-3}$  &   3.643e-01 & 0.59 &  2.278e-02& 1.12& 2.534e-01&  0.72&    \\
		& $2^{-4}$  &   2.527e-01  & 0.53 & 1.103e-02  & 1.04& 1.656e-01&  0.61&   \\
		& $2^{-5}$ &    1.777e-01  & 0.51 &5.444e-03 &  1.02&  1.127e-01&0.56&  \\
		\midrule
		\multirow{5}{*}{$10^{-10}$} 
		& $2^{-2}$  &    5.475e-01 &    -    & 4.944e-02 &  -&  4.183e-01&  -&  \\
	& $2^{-3}$  &   3.643e-01 & 0.59 &  2.278e-02& 1.12& 2.534e-01&  0.72&    \\
	& $2^{-4}$  &   2.527e-01  & 0.53 & 1.103e-02  & 1.04& 1.656e-01&  0.61&   \\
	& $2^{-5}$ &    1.777e-01  & 0.51 &5.444e-03 &  1.02&  1.127e-01&0.56&  \\
		\bottomrule
	\end{tabular}
\end{table}

Table~\ref{optimalerrorPhiP} displays the numerical results for the decoupled method~\eqref{ImprovedHcurl}, including ${\rm Err}(\boldsymbol{\phi}_0)$, $\|u_0-u_{h}\|_0$ and $|u_0-u_{h}|_1$. As evidenced in Table~\ref{optimalerrorPhiP}, the error ${\rm Err}(\boldsymbol{\phi}_0)=\mathcal{O}(h^2)$,
confirming the theoretical estimate in~\eqref{varepsilonIh3} for $s=3$. Additionally, Table~\ref{optimalerrorPhiP} indicates that
$|u_0-u_{h}|_1=\mathcal{O}(h^{2})$, aligning with the theoretical result~\eqref{eq:errorestimatauhH04} for $s=3$. Moreover, the $L^2$-norm error $\|u_0 - u_h\|_0$ achieves optimal third-order convergence, i.e., $\mathcal{O}(h^3)$, in agreement with the refined estimate~\eqref{u0L2error}.

In conclusion, when a boundary layer phenomenon is present, the decoupled method \eqref{ImprovedHcurl} significantly outperforms the decoupled method \eqref{decoupleHcurlgrad} by achieving higher-order convergence rates.
\begin{table}[ht]
	\centering
	\caption{Errors\,\,
		${\rm Err}(\boldsymbol{\phi}_0)$,\,\,$\|u_0-u_{h}\|_0$ and $|u_0-u_{h}|_1$ of the decoupled method \eqref{ImprovedHcurl} with boundary layer.}
	\label{optimalerrorPhiP}
	\begin{tabular}{cccc|cc|cccc}
		\toprule
		$\varepsilon$ \,  &  $h$
		&$	{\rm Err}(\boldsymbol{\phi}_0)$ \,  &  rate  \, &$\|u_0-u_{h}\|_0$ \,  &  rate     \,  &  $|u_0-u_{h}|_1$ \,  &  rate\\
		\midrule
		\multirow{5}{*}{$10^{-6}$} 
		& $2^{-2}$   &1.692e-01      & -         &4.981e-03    &  2.98   &1.332e-01    &  -&\\
		& $2^{-3}$   &4.499e-02    & 1.91    & 6.038e-04   & 3.04    &3.556e-02   &1.91&\\
		& $2^{-4}$   & 1.148e-02    &1.97     &7.453e-05     &3.02    &9.089e-03   & 1.97 & \\
		& $2^{-5}$   & 2.888e-03   &1.99    &9.287e-06     &  3.00   &2.885e-03   &1.99&  \\
		\midrule
		\multirow{5}{*}{$10^{-8}$} 
		& $2^{-2}$   &1.692e-01       & -        &4.981e-03    &  2.98      &1.332e-01     &  -&\\
		& $2^{-3}$    &4.499e-02    & 1.91    & 6.038e-04  & 3.04       & 3.556e-02   &1.91&\\
		& $2^{-4}$    & 1.148e-02     & 1.97    &7.453e-05   &3.02        &9.089e-03    &1.97 & \\
		& $2^{-5}$   & 2.888e-03    & 1.99    &9.287e-06   &3.00        &2.885e-03    &1.99&  \\
		\midrule
		\multirow{5}{*}{$10^{-10}$} 
		& $2^{-2}$   &1.692e-01     & -          &4.981e-03   &  2.98&1.332e-01&  -&\\
		& $2^{-3}$   &4.499e-02   & 1.91     & 6.038e-04  & 3.04& 3.556e-02& 1.91&\\
		& $2^{-4}$   & 1.148e-02   & 1.97      &7.453e-05  &3.02&  9.089e-03& 1.97 & \\
		& $2^{-5}$   & 2.888e-03   &  1.99    &9.287e-06  &  3.00&2.885e-03&1.99&  \\
		\bottomrule
	\end{tabular}
\end{table}

\bibliographystyle{abbrv}
\bibliography{./refs}
\end{document}